	\apptocmd{\sloppy}{\hbadness 10000\relax}{}{} %For badboxes in .bbl
   \def\MR#1{}
	\tikzstyle{vertex}=[circle,draw,inner sep=0pt,minimum size=12pt] %vertex = node with circle
	\newcommand{\vertex}{\node[vertex]}
	\tikzset{
	every picture/.style={thick,>=latex,->,node distance=2cm} %Arrow style and default vertex distance
	}
\theoremstyle{plain}
	\newtheorem{thm}{Theorem}[section]
	\newtheorem*{thm*}{Theorem}
	\newtheorem{prop}{Proposition}[section]
	\newtheorem*{prop*}{Proposition}
	\newtheorem{corol}{Corollary}[section]
	\newtheorem*{corol*}{Corollary}
	\newtheorem{lem}{Lemma}[section]
	\newtheorem*{lem*}{Lemma}
\theoremstyle{definition}
	\newtheorem{defn}{Definition}[section]
\theoremstyle{remark}
	\newtheorem{rem}{Remark}[section]
\newcommand{\addQEDstyle}[2]{\AtBeginEnvironment{#1}{\pushQED{\qed}\renewcommand{\qedsymbol}{#2}}
\AtEndEnvironment{#1}{\popQED}} %Symbol at the end of environment (call as \addQEDstyle{environmentname}{symbolname})
\begin{document}
	%Top matter 
	\title[Simply-laced quantum connections]{Simply-laced quantum connections generalising KZ} 

	\author[G. Rembado]{Gabriele Rembado}

	\address[G. Rembado]{Department of Mathematics of Orsay, University of Paris-Sud/Saclay, Rue Michel Magat, 91405, Orsay, France}

	\curraddr{Hausdorff Centre for Mathematics, University of Bonn, 60 Endenicher Allee, 53115, Bonn, Germany}

	\email{gabriele.rembado@hcm.uni-bonn.de}

	\thanks{This research was completed while the author was a PhD candidate at the \emph{Laboratoire de Mathématiques d'Orsay} (University of Paris-Sud/Saclay).}

	\subjclass[2010]{81R12}

	\keywords{Isomonodromic deformations, meromorphic connections, quantum integrable systems, deformation quantisation, quiver varieties} 

	\begin{abstract}
		We construct a new family of flat connections generalising the KZ connection, the Casimir connection and the dynamical connection. These new connections are attached to simply-laced graphs, and are obtained via quantisation of time-dependent Hamiltonian systems controlling the isomonodromic deformations of meromorphic connections on the sphere.
	\end{abstract}

	{\let\newpage\relax\maketitle} %No page break after title

	\setcounter{tocdepth}{1} %No subsections in TOC
	\tableofcontents

	%Sections' input
	\section*{Introduction}

Choose positive integers $m,n$, and set $\mathfrak{g} \coloneqq \mathfrak{gl}_n(\mathbb{C})$, equipped with the nondegenerate pairing $\mathfrak{g} \otimes \mathfrak{g} \to \mathbb{C}$ given by the trace. 
Let then $\mathbf{B} \coloneqq \mathbb{C}^m \setminus \{\diags\}$ be the configuration space of ordered $m$-tuples of point in $\mathbb{C}$ with standard complex coordinates $t_i$, and $U(\mathfrak{g})$ the universal enveloping algebra of $\mathfrak{g}$. 
Finally, consider the trivial bundle $U(\mathfrak{g})^{\otimes m} \times \mathbf{B} \to \mathbf{B}$. 

The universal Knizhnik--Zamolodchikov equations (KZ) are a system of linear differential equations for a local section $\psi$ of this bundle. 
To write them, define $\Omega \in \mathfrak{g} \otimes \mathfrak{g}$ to be symmetric tensor corresponding to the identity $\Id_{\mathfrak{g}} \in \mathfrak{g} \otimes \mathfrak{g}^*$ under the duality $\mathfrak{g} \simeq \mathfrak{g}^*$ induced by the nondegenerate pairing, and $\Omega^{(ij)} \in \End\big(U(\mathfrak{g})^{\otimes m}\big)$ the action of $\Omega$ by left multiplication on the $i$th and $j$th slot of the $m$-fold tensor power $U(\mathfrak{g})^{\otimes m}$. 
Then the KZ equations read
\begin{equation}
	d\psi = \widehat{\varpi}\psi, \qquad \text{where} \qquad \widehat{\varpi} \coloneqq \sum_{1 \leq i \neq j \leq m} \Omega^{(ij)}\frac{dt_i - dt_j}{t_i - t_j} \, .
\end{equation}
This system originated as equations for correlators in the Wess--Zumino--Witten model for two-dimensional conformal field theory~\cite{knizhnik_zamolodchikov_1984_wess_zumino_witten}. 
Mathematically it amounts to a flat connection whose monodromy provides important representations of the $m$-string braid group---in view of the Kohno--Drinfeld theorem. 
Moreover the higher-genus analogue of the KZ connection, in the WZW model for 2d conformal field theory~\cite{tsuchiya_ueno_yamada_1989_conformal_field_theory}, is known to be equivalent to the Hitchin connection in geometric quantisation~\cite{beauville_laszlo_1994_conformal_blocks_and_generalised_theta_functions, laszlo_1998_hitchin_equals_wzw}. 

\vspace{5pt} 

It has been known for some years~\cite{reshetikhin_1992_kz_deformation_isomonodromy, harnad_1996_quantum_imd_and_the_kz_equations} that KZ can be obtained as a deformation quantisation of a system of nonlinear differential equations: the Schlesinger system~\cite{schlesinger_1905_ueber_die_loesungen_gewisser_linearer_differentialgleichungen}. This system is defined for matrices $R_1 \dotsc, R_m \in \mathfrak{g}$ which depend on a configuration of points in $\mathbb{C}$, and can be written in differential form as
\begin{equation}
	dR_i = \sum_{j \neq i} [R_i,R_j]\frac{dt_i - dt_j}{t_i - t_j} \, .
\end{equation}
These equations control the isomonodromic deformations of Fuchsian systems $d - \sum_i \frac{R_i}{z - t_i}dz$ on $\mathbb{C} P^1$, and are in turn determined by time-dependent Hamiltonians $H_i \colon \mathfrak{g}^m \times \mathbf{B} \to \mathbb{C}$, where
\begin{equation}
	H_i \coloneqq \sum_{j \neq i} \Tr(R_iR_j)\frac{dt_i -dt_j}{t_i - t_j} \, .
\end{equation}
The key idea behind the quantisation is that the function $\Tr(R_iR_j)$ on $\mathfrak{g}^m$ becomes $\Omega_{ij}$ under the Poincaré--Birkhoff--Witt map $\Sym(\mathfrak{g})^{\otimes m} \to U(\mathfrak{g})^{\otimes m}$. 

\vspace{5pt}

Almost two decades later various generalisations of KZ have appeared, such as the FMTV connection of Felder--Markov--Tarasov--Varchenko~\cite{felder_markov_tarasov_varchenko_2000_dynamical_connection}, and the DMT connection of De Concini and Millson--Toledano~Laredo~\cite{millson_toledano_laredo_2005_casimir_connection}, also known as the \emph{Casimir} connection, which is essentially an important special case of~\cite{felder_markov_tarasov_varchenko_2000_dynamical_connection}. 
The DMT connection may also be derived from isomonodromy via a simple deformation quantisation, but this time from an irregular isomonodromy problem~\cite{boalch_2002_g_bundles_isomonodromy_quantum_weyl_groups}. 

Importantly, in~\cite{felder_markov_tarasov_varchenko_2000_dynamical_connection} the space of times of KZ is increased by adding on the regular part of a Cartan subalgebra, and thus in the case of $\mathfrak{gl}_n(\mathbb{C})$ with $m$ marked points it becomes a product $\mathbb{C}^m \setminus \{\diags\} \times \mathbb{C}^n \setminus \{\diags\}$. 
The extra times correspond to the irregular isomonodromy times of the JMMS system of Jimbo--Miwa--M\^ori--Sato~\cite{jimbo_miwa_mori_sato_1980_density_matrix_bose_gas}. Harnad~\cite{harnad_1994_dual_isomonodromic_deformations} has shown that the two collections of times in the JMMS system may be swapped, and this classical duality between the Schlesinger system and its dual version underlies the quantum/Howe duality of~\cite{baumann_1999_q_weyl_group} used in~\cite{toledano_laredo_2002_kohno_drinfeld_for_quantum_weyl_groups} to relate the KZ to the DMT connection for $\mathfrak{gl}_n(\mathbb{C})$. 

\vspace{5pt}

More recently, the Hamiltonian theory of isomonodromy equations was extended~\cite{boalch_2012_simply_laced_isomonodromy_systems}, introducing \emph{simply-laced isomonodromy systems} (SLIMS). 
They involve $k$ collections of times, generalising the two collections of times in the JMMS system, and are attached to complete $k$-partite graphs plus some representation theoretic data. 
As a particular case they contain the JMMS system---corresponding to a complete bipartite graph---and further specialising they also include the Schlesinger system and its Harnad dual version---corresponding to a star-shaped graph. 

Moreover, the simply-laced isomonodromy systems extend a particular case of~\cite{jimbo_miwa_ueno_1981_monodromy_preserving_deformations}, which corresponds to complete $k$-partite graphs having at most one splayed node, and with all other nodes being one-dimensional (namely, one term of the ``master'' Equation~8.4 on page 33 of~\cite{boalch_2012_simply_laced_isomonodromy_systems} vanishes in the setup of~\cite{jimbo_miwa_ueno_1981_monodromy_preserving_deformations}). 
In turn, this was one of our motivations to consider the SLIMS: they are more symmetric than~\cite{jimbo_miwa_ueno_1981_monodromy_preserving_deformations}, because one can now permute all the parts, and in future work we plan to study the quantisation of the symmetries of~\cite{boalch_2012_simply_laced_isomonodromy_systems}, which should simultaneously generalise~\cite{nagoya_yamada_2014_symmetries_of_quantum_lax_equations} and the aforementioned~\cite{baumann_1999_q_weyl_group,toledano_laredo_2002_kohno_drinfeld_for_quantum_weyl_groups}. 

The quantisation of the isomonodromic deformation systems that occur at the intersection of the simply-laced isomonodromy systems and~\cite{jimbo_miwa_ueno_1981_monodromy_preserving_deformations} was constructed in~\cite{nagoya_sun_2011_confluent_kz_equations_with_poincare_rank_2_at_infinity}, but this is still far from including all complete $k$-partite graphs since at most one node can be splayed. Now we want to attain the general case without further intermediate steps, and thus we ask: 

\begin{center}
	\textbf{can one quantise the simply-laced isomonodromy systems, generalising the derivations of the KZ and DMT connections as quantisations of isomonodromy systems?} 
\end{center}

In this article we show that this is indeed possible, proving the following.

\begin{thm*}
	There exists a strongly flat nonautonomous quantum Hamiltonian system quantising the simply-laced isomonodromy systems of~\cite{boalch_2012_simply_laced_isomonodromy_systems}. 
	Moreover, after quantum Hamiltonian reduction the quantum system specialises to KZ and to systems which are semiclassically equivalent to DMT and FMTV.
\end{thm*}

Hence in brief we construct a new family of flat connections out of the deformation quantisation of isomonodromy systems on the Riemann sphere, which we call \emph{simply-laced quantum connections} (SLQC), thereby completing the following table: 

\vspace{5pt}

\noindent\adjustbox{max width=\textwidth}{
\begin{tabular}{ c | c  c  c  c }
	\hline \\
	Isomonodromy system & Schlesinger & Dual Schlesinger & JMMS & SLIMS \\[10pt]
	\hline \\
	Space of times & $\mathbb{C}^m \setminus \{\diags\}$ & $\mathbb{C}^n \setminus \{\diags\}$ & $\mathbb{C}^m \setminus \{\diags\} \times \mathbb{C}^n \setminus \{\diags\}$ & $\prod_1^k \mathbb{C}^{d_i} \setminus \{\diags\}$ \\[10pt]
	\hline \\
	Quantum connection & KZ & DMT & FMTV & SLQC \\[10pt]
	\hline 
\end{tabular}}

\subsection*{Layout of the article}

In \S~\ref{sec:classical_systems} we recall the construction of the simply-laced isomonodromy systems.
This involves a trivial symplectic fibration $\mathbb{F}_a = \mathbb{M} \times \mathbf{B} \to \mathbf{B}$, and results in a time-dependent Hamiltonian system $H \colon \mathbb{F}_a \to \mathbb{C}^I$. 
These nonautonomous systems are attached to complete $k$-partite quivers plus some decoration, and $\mathbb{M}$ is a space of representations of such quivers. 

In \S~\ref{sec:classical_potentials} we realise the Hamiltonians as traces of potentials on the quiver (classical potentials), and we study the Poisson bracket of such traces. 

In \S~\ref{sec:quantisation_algebras} we discuss the filtered quantisation of $\mathbb{M}$, which results in a noncommutative filtered algebra $A$: the Weyl algebra. 
We then upgrade this to a $\hslash$-deformation quantisation $\widehat{A}$ via the Rees construction, which comes with a semiclassical limit $\widehat{A} \to A_0$ onto the classical algebra $A_0 = \mathbb{C}[\mathbb{M}]$ of functions on the representation space.  

In \S~\ref{sec:quantisation_potentials} we define quantum potentials, which are related to the quantum algebras $A$ and $\widehat{A}$ similarly to how classical potentials are related to the algebra of functions on $\mathbb{M}$. 

In \S~\ref{sec:quantum_connections} we explain how to quantise the classical Hamiltonians $H_i \colon \mathbf{B} \to A_0$ to time-dependent quantum operators $\widehat{H}_i \colon \mathbf{B} \to \widehat{A}$, thereby defining the \emph{universal} simply-laced quantum connection. 
The simply-laced quantum connection is obtained at the quantum level $\hslash = 1$. 

In \S~\ref{sec:flatness} we prove the main result (Theorem~\ref{thm:quantum_flatness}) that the universal simply-laced quantum connection is strongly flat. 

In \S~\ref{sec:reduction_KZ} we show that the quantum Hamiltonian reduction of the simply-laced quantum connection yields the KZ connection, in the special case of a star-shaped graph with no irregular times. 

In \S~\ref{sec:reduction_DMT} we consider the Harnad-dual data of the previous section, and we show that the quantum Hamiltonian reduction of the simply-laced quantum connection yields a quantum system which is semiclassically equivalent to the DMT connection. 

In \S~\ref{sec:reduction_FMTV} we show that the quantum Hamiltonian reduction of the simply-laced quantum connection is semiclassically equivalent to the FMTV connection, in the case of a generic complete bipartite quiver. 

\vspace{5pt}

All vector spaces, manifold/varieties and algebras are tacitly defined over $\mathbb{C}$. 
All gradings and filtrations of algebras are over $\mathbb{Z}_{\geq 0}$; all filtrations are exhaustive and all algebras are finitely generated. 

The end of remarks/examples is signaled by a $\triangle$.

	\section{Simply-laced isomonodromy systems}
\label{sec:classical_systems}

In this section we define the simply-laced isomonodromy systems. 
They are systems of nonautonomous Hamiltonians attached to complete $k$-partite graphs plus some decoration, which we now introduce. 

Let $J$ be a finite set of cardinality $|J| = k \geq 2$, and $I$ another finite set provided with a surjection $\pi \colon I \twoheadrightarrow J$ with nonempty fibres. 
Write $I = \coprod_{j \in J} I^j$ for the induced partition of $I$, with parts $I^j \coloneqq \pi^{-1}(j)$, and let $\widetilde{\mathcal{G}}$ be the complete graph on nodes $J$.

\begin{defn}
	The complete $k$-partite graph on nodes $I$ is the graph $\mathcal{G}$ in which two nodes $i,j \in I$ are connected by an edge if and only if they lie in different parts of $I$. 
\end{defn}

Hence the adjacency of both graphs is completely determined by their set of nodes. 
Moreover, $\mathcal{G}$ is obtained from $\widetilde{\mathcal{G}}$ by splaying the nodes of the complete graph: one replaces $j$ with the finite set of nodes $I^j$, and connects them to every node outside $I^j$. 
Both $\widetilde{\mathcal{G}}$ and $\mathcal{G}$ are by definition simply-laced, i.e. without edge loops or repeated edges. 
We may equivalently think of these graphs as special types of quivers, by identifying an edge with a pair of opposite arrows. 
We keep the notation $\widetilde{\mathcal{G}}$ and $\mathcal{G}$ for the quivers corresponding to the two graphs, and from now on we work with them.

Now choose representation-theoretic data: attach finite-dimensional vector spaces $\{V_i\}_{i \in I}$ to the nodes of $\mathcal{G}$, and then associate the spaces $W^j \coloneqq \bigoplus_{i \in I^j} V_i$ to the nodes of $\widetilde{\mathcal{G}}$. 
Then, in addition to this usual data, fix an embedding 
\begin{equation}
	a \colon J \hookrightarrow \mathbb{C} \cup \{\infty\}, \quad j \longmapsto a_j \, .
\end{equation}
This assigns different elements of the complex projective line to the nodes of $\widetilde{\mathcal{G}}$.

\begin{defn}
	The embedding $a \colon J \hookrightarrow \mathbb{C} \cup \{\infty\}$ is called a reading of $\widetilde{\mathcal{G}}$. 
	The reading is generic if $\infty \not\in a(J)$, and degenerate otherwise; if the reading is degenerate, we write $\infty \coloneqq a^{-1}(\infty) \in J$ for the node sent to $\infty$. 

	We extend the reading of $\widetilde{\mathcal{G}}$ to a map $a \colon I \to \mathbb{C} \cup \{\infty\}$ by declaring the extension to be constant on each part $I^j$ of $I$, and we call this a reading of $\mathcal{G}$.
\end{defn}

Following~\cite{boalch_2012_simply_laced_isomonodromy_systems}, these data define a space of times
\begin{equation}
	\mathbf{B} \coloneqq \prod_{j \in J} \mathbb{C}^{I^j} \setminus \{\diags\} \subseteq \mathbb{C}^I \, ,
\end{equation}
and a vector space of representations of the quiver $\mathcal{G}$ in the $J$-graded vector space $V \coloneqq \bigoplus_{j \in J} W^j$:
\begin{equation}
	\mathbb{M} \coloneqq \Rep(\mathcal{G},V) = \bigoplus_{i \neq j \in J} \Hom\left(W^i,W^j\right) \, .
\end{equation}

Equivalently, $\mathbb{M} \subseteq \End(V)$ is the subspace of off-diagonal endomorphisms with respect to the block decomposition 
\begin{equation}
	\End(V) = \End\left(\bigoplus_{j \in J} W^j\right) = \bigoplus_{j \in J} \End(W^j) \oplus \bigoplus_{i \neq j \in J} \Hom(W^i,W^j) \, .
\end{equation}

Denote $B^{ij}\colon W^j \to W^i$ the linear maps defined by a representation, and let $X^{ij} = \phi_{ij}B^{ij}$ be the scalar multiplication of $B^{ij}$ by the nonvanishing complex number  
\begin{equation}
	\label{eq:weights}
	\phi_{ij} = -\phi_{ji} \coloneqq 
	\begin{cases}
		(a_i - a_j)^{-1}, & a_i,a_j \neq \infty \\
		1, & \quad a_i = \infty
	\end{cases} \, .
\end{equation}
Then we equip the vector space $\mathbb{M}$ with the symplectic form
\begin{equation}
	\label{eq:symplectic_form}
	\omega_a \coloneqq \frac{1}{2}\sum_{i \neq j \in J} \Tr\big(dX^{ij} \wedge dB^{ji}\big) \, . 
\end{equation}

Now define the space $\mathbb{F}_a$ as the product $\mathbb{F}_a \coloneqq \mathbb{M} \times \mathbf{B}$, where the dependence on the reading $a$ lies in the symplectic form~\eqref{eq:symplectic_form}. 
The canonical projection $\pi_a \colon \mathbb{F}_a \to \mathbf{B}$ makes $\mathbb{F}_a$ into a trivial symplectic fibration over the base $\mathbf{B}$, with fibre $(\mathbb{M},\omega_a)$. 
This symplectic fibration parametrises a family of meromorphic connections on a trivial vector bundle over the Riemann sphere, as follows. 

First, define $U^{\infty} \coloneqq \bigoplus_{j \neq \infty} W^j$ to be the natural complement to $W^{\infty}$ inside $V$, and write a generic endomorphism $\gamma \in \End(V)$ as 
\begin{equation}
	\gamma = 
	\begin{pmatrix}
		T^{\infty} & Q \\
		P & B + T
		\end{pmatrix} \, ,
\end{equation}
using the natural block decomposition for elements of $\End(V) = \End(W^{\infty} \oplus U^{\infty})$. 
This means that $W^{\infty} \in \End(T^{\infty})$ and $B, T \in \End(U^{\infty})$, whereas $Q \in \Hom(W^{\infty},U^{\infty})$ and $P \in \Hom(U^{\infty},W^{\infty})$. 
Moreover, we let $B$ (resp. $T$) be the off-diagonal (resp. diagonal) part of the restriction $\left.\gamma\right|_{U^{\infty}}$. 
Then separating the off-diagonal and the diagonal part of $\gamma$ leads to the global decomposition $\gamma = \Gamma + \widehat{T}$, where
\begin{equation}
	\Gamma \coloneqq 
	\begin{pmatrix}
		0 & P \\
		Q & B
	\end{pmatrix} 
	\in \mathbb{M}, \qquad \text{and} \qquad \widehat{T} \coloneqq 
	\begin{pmatrix}
		T^{\infty} & 0 \\
		0 & T
	\end{pmatrix} 
	\in \bigoplus_{j \in J} \End(W^j) \, .
\end{equation}

Now assume further that $\widehat{T}$ be semisimple, and let $W^j = \bigoplus_{i \in I^j} V_i$ be the eigenspace decomposition of $W^j$ with respect to the restriction $T^j \coloneqq \left.\widehat{T}\right|_{W^j}$. 
Hence the part $I^j \subseteq I$ of the $k$-partite set $I$ is an index set for the spectrum of $T^j$, and one may write 
\begin{equation}
	\widehat{T} = \sum_{i \in I} t_i \Id_i \, ,
\end{equation}
where $\Id_i \in \End(V)$ is the idempotent for $V_i \subseteq V$, and where $\{t_i\}_{i \in I^j} \subseteq \mathbb{C}$ is the spectrum of $T^j$. 

Thus the element $\{t_i\}_{i \in I} \in \mathbf{B}$ of the space of times encodes the spectrum of $\widehat{T}$. 
Numbers in the same part of $I$ are distinct, and varying this element inside $\mathbf{B}$ amounts to an \emph{admissible} deformation of spectrum of $\widehat{T}$, that is a deformation so that the eigenspace decomposition of $W^j$ be fixed for $j \in J$. 
Equivalently, distinct eigenvalues of $T^j$ may not coalesce along the deformation.

Finally, introduce the notation $\Id^j \in \End(V)$ for the idempotent for the subspace $W^j$ inside $V$, and define the endomorphism 
\begin{equation}
	A \coloneqq \sum_{j \neq \infty} a_j\Id^j \in \End(U^{\infty}) \, ,
\end{equation}
using the finite part $I \setminus I^{\infty} \to \mathbb{C}$ of the reading. 
With this notation introduced, we now construct a meromorphic connection on the the trivial vector bundle $U^{\infty} \times \mathbb{C}P^1 \to \mathbb{C}P^1$. 
Take $z$ to be a holomorphic coordinate which identifies $\mathbb{C}P^1 \simeq \mathbb{C} \cup \{\infty\}$, and write $R_i = Q_iP_i \in \End(U^{\infty})$ using the components 
\begin{center}
	\begin{tikzpicture}[node distance = 3cm]
		\node (a) [draw, minimum size = 30pt, circle] at (0,0) {$V_i$};
		\node (b) [draw, circle, , right of = a] {$U^{\infty}$};
		\path
		(a) edge [bend left = 20] node [above]{$Q_i$} (b)
		(b) edge [bend left = 20] node [below]{$P_i$} (a);
	\end{tikzpicture}
\end{center}
of $Q$ and $P$. 
Then we define the meromorphic connection 
\begin{equation}
	\label{eq:meromorphic_connection_slqc}
	\nabla \coloneqq d - \big(Az + B + T + Q(z - T^{\infty})^{-1}P\big)dz = d - \left(Az + B + T + \sum_{i \in I^{\infty}} \frac{R_i}{z - t_i}\right)dz \, .
\end{equation}
This connection has simple poles at the points $\{t_i\}_{i \in I^{\infty}} \subseteq \mathbb{C}$ with residues $R_i$, and a pole of order three at infinity when $A \neq 0$. 
It follows from the definition that simple poles exist only in the case of a degenerate reading, when the part $I^{\infty}$ is nonempty.

We now consider isomonodromic deformations of~\eqref{eq:meromorphic_connection_slqc}. 
This means by definition letting the spectral type of $\widehat{T}$ vary inside $\mathbf{B}$---i.e. varying the positions of the poles and the irregular coefficient $T$ at infinity---and look for a new off-diagonal term $\Gamma$ inside $\mathbb{M}$ such that the monodromy/Stokes data of the resulting meromorphic connection are the same as the starting one. 
Finding such deformations amounts to solving a system of nonlinear, first order differential equations, which are called \emph{isomonodromy equations}. 
The deformation parameters for the spectrum of $T$ are called \emph{irregular times}, since they correspond to deformations of irregular singularities. 
Simple poles instead are regular singularities, and thus the deformation parameters for the spectrum of $T^{\infty}$ are called \emph{regular times} in the isomonodromy literature.

Geometrically, isomonodromic families of connections define the leaves of an integrable nonlinear/Ehresmann symplectic connection inside the symplectic bundle $\pi_a \colon \mathbb{F}_a \to \mathbf{B}$.
Since the bundle is trivial over $\mathbf{B}$, it also carries a trivial Ehresmann connection, which is the pointwise span of the horizontal vector fields $\partial_{t_i}$ (for $i \in I$) associated to the global coordinates on $\mathbf{B}$. 
Importantly, the difference between the two connections can be integrated to a time-dependent Hamiltonian system $H_i \colon \mathbb{F}_a \to \mathbb{C}$. 
More precisely, if one denotes $\{\cdot,\cdot\}$ the Poisson bracket of the symplectic manifold $(\mathbb{M},\omega_a)$, then the (interesting) \emph{isomonodromy connection} is the pointwise span of the vector fields $X_i = \partial_{t_i} + \{H_i,\cdot\}$, where $\{H_i,\cdot\}$ is the vertical Hamiltonian vector field of the fibrewise restriction of $H_i$ to $\mathbb{M}$.

The definition of the Hamiltonians themselves is given by in coordinate-free fashion by constructing a horizontal 1-form $\varpi \in \Omega^0(\mathbb{F}_a,\pi_a^*T^*\mathbf{B})$ on the total space of the fibration. 
To write it down, set $\Xi \coloneqq \phi(\Gamma)$ and $X \coloneqq \phi(B)$, by applying the alternating weights~\eqref{eq:weights} componentwise. 
Then let $\delta(\Xi\Gamma)$ denote the diagonal part of $\Xi\Gamma$ in the decomposition $V = \bigoplus_{j \in J} W^j$, and set
\begin{equation}
	\widetilde{\Xi\Gamma} \coloneqq \ad_{\widehat{T}}^{-1} \bigl[ d\widehat{T},\Xi\Gamma \bigr] \, .
\end{equation}

With this notation introduced, one defines
\begin{equation}
	\label{eq:imd_connection}
	\varpi \coloneqq \frac{1}{2}\Tr\left(\widetilde{\Xi\Gamma}\delta(\Xi\Gamma)\right) - \Tr\left(\Xi\gamma\Xi d\widehat{T}\right) + \Tr\big(X^2TdT\big) + \Tr\big(PAQ T^{\infty}dT^{\infty}\big) \, .
\end{equation}
The Hamiltonians are now defined by $H_i \coloneqq \langle \varpi, \partial_{t_i} \rangle$, which means that $\varpi = \sum_{i \in I} H_idt_i$. 

Importantly, this time-dependent Hamiltonian system satisfies a strong version of integrability. 

\begin{thm}[\cite{boalch_2012_simply_laced_isomonodromy_systems}]
	\label{thm:classical_flatness}
	The isomonodromy system $H_i\colon \mathbb{F}_a \to \mathbb{C}$ is strongly flat, which means that
	\begin{equation}
		\{H_i,H_j\} = 0 = \frac{\partial H_i}{\partial t_j} - \frac{\partial H_j}{\partial t_i}, \qquad \text{for all } i,j \in I \, ,
	\end{equation}
	where $\{\cdot,\cdot\}$ is the symplectic Poisson bracket of $(\mathbb{M},\omega_a)$, computed by fibrewise restriction of the Hamiltonians to the vertical directions.
\end{thm}

We will also think to the Hamiltonian $H_i \colon \mathbb{M} \times \mathbf{B} \to \mathbb{C}$ as the data of a polynomial function on $\mathbb{M}$ for all choice of times in $\mathbf{B}$. 
In this viewpoint the Hamiltonian is a global section of the bundle of commutative algebras $A_0 \times \mathbf{B} \to \mathbf{B}$, where $A_0 \coloneqq \mathscr{O}_{\mathbb{M}}(\mathbb{M}) \simeq \Sym(\mathbb{M^*})$ is the Poisson algebra of regular/polynomial functions on the affine space $\mathbb{M}$, endowed with the above Poisson bracket $\{\cdot,\cdot\}$.

\begin{defn}
	The simply-laced isomonodromy system attached to the complete $k$-partite graph $\mathcal{G}$ on nodes $I$, to the vector spaces $\{V_i\}_{i \in I}$ and to the reading $a \colon I \to \mathbb{C} \cup \{\infty\}$ is the time-dependent Hamiltonian system~\eqref{eq:imd_connection}. The Hamiltonians $H_i$ are called the simply-laced Hamiltonians. 
\end{defn}

This is the classical Hamiltonian system we will quantise. To this end, we first express the Hamiltonians $H_i$ as traces of potentials in $\mathcal{G}$.

\section{Classical potentials}
\label{sec:classical_potentials}

Consider again the complete $k$-partite quiver $\mathcal{G}$ on nodes $I = \coprod_{j \in J} I^j$.

\begin{defn}
	A potential on $\mathcal{G}$ is a $\mathbb{C}$-linear combination of oriented cycles in $\mathcal{G}$, defined up to cyclic permutations of their arrows. The space of potentials is denoted $\mathbb{C}\mathcal{G}_{\cycl}$.
\end{defn}

Let now $\{V_i\}_{i \in I}$ be a family of finite-dimensional vector spaces, set $V \coloneqq \bigoplus_{i \in I} V_i$ and take a reading $a \colon J \hookrightarrow \mathbb{C} \cup \{\infty\}$. 
Then every potential $W \in \mathbb{C}\mathcal{G}_{\cycl}$ defines a polynomial function on $\mathbb{M} = \Rep(\mathcal{G},V)$, by taking the trace of its cycles in any given representation. 
Explicitly, write an oriented cycle of length $n \geq 0$---to be called an $n$-cycle---as $C = \alpha_n \dotsc \alpha_1$, where $\alpha_1, \dotsc, \alpha_n$ is a sequence of composable arrows in $\mathcal{G}$ (reading from right to left in the cycle). 
Then the function $\Tr(C) \colon \mathbb{M} \to \mathbb{C}$ is
\begin{equation}
	\Tr(C) = \Tr\big(X^{\alpha_n} \dotsm X^{\alpha_1}\big) \, ,
\end{equation}
and a time-dependent potential $W \colon \mathbf{B} \to \mathbb{C}\mathcal{G}_{\cycl}$ will define a nonautonomous Hamiltonian $\Tr(W) \colon \mathbf{B} \to A_0$ on $\mathbb{M}$ with space of times $\mathbf{B}$, where $A_0 = \Sym(\mathbb{M}^*)$ as above. 

The main point is that the simply-laced isomonodromy system~\eqref{eq:imd_connection} is made up of the traces of certain potentials on $\mathcal{G}$. 
To write them introduce the notation $I_i \coloneqq \pi^{-1}(\pi(i))$ for the part of $I$ containing the node $i$, and denote $\alpha_{ij}$ the arrow in $\mathcal{G}$ from the node $i$ to the node $j$. 
Then consider the following (possibly time-dependent) potentials on $\mathcal{G}$:
\begin{equation}
	\label{eq:classical_potentials}
	\begin{split}
		W_i(2) &\coloneqq \sum_{j \in I \setminus I_i} (t_i - t_j) \alpha_{ij}\alpha_{ji} \, , \\
		W_i(3) &\coloneqq \sum_{j,l \in I \setminus I_i \colon I_j \neq I_l} (a_j - a_l)\alpha_{il}\alpha_{lj}\alpha_{ji} \, , \\
		W_i(4) &\coloneqq \sum_{m \in I_i \setminus \{i\}} \sum_{j,l \in I \setminus I_i} \frac{(a_i - a_j)(a_i - a_l)}{t_i - t_m}\alpha_{ij}\alpha_{jm}\alpha_{ml}\alpha_{li} \, .
	\end{split}
\end{equation}

In plain words, the potential $W_i(2)$ is the sum of all oriented 2-cycles based at $i$, and thus its second node $j$ lives in a different part of $I$; similarly $W_i(3)$ is a linear combination of all oriented 3-cycles based at $i$, and thus its three nodes $i,j,l$ live in different parts of $I$; and $W_i(4)$ is a linear combination of all oriented 4-cycles based at $i$ which pass through a second node $m \neq i$ in the same part of $i$, and thus their remaining two nodes $j,l$ lie outside the common part of $i$ and $m$. 

\begin{prop}
\label{prop:classical_hamiltonians_are_traces}
	Assume the reading of $\mathcal{G}$ is nondegenerate. Then the simply-laced Hamiltonian $H_i$ is the trace of the time-dependent potential $W_i = W_i(2) + W_i(3) + W_i(4)$ in the variables $X^{ij}$, where $W_i(2), W_i(3)$ and $W_i(4)$ are as in~\eqref{eq:classical_potentials}.

	If the reading is instead degenerate then $H_i$ is the trace of a potential with the same cycles of $W_i$, but with different weights.
\end{prop}

This follows from an explicit expansion of the formula~\eqref{eq:imd_connection}. 
We denote $W_i$ the potential such that $H_i = \Tr(W_i)$ for $i \in I$, both in a degenerate and in a nondegenerate reading.

\begin{defn}
\label{def:isomonodromy_cycles}
	The potential $W_i$ is the isomonodromy potential at the node $i \in I$. The oriented cycles that appear in the isomonodromy potentials (described in words in the above paragraph) are the isomonodromy cycles.

	The isomonodromy 4-cycles are further divided in two types:
	\begin{enumerate}
		\item nondegenerate, if they pass through four distinct nodes of $\mathcal{G}$.
		
		\item degenerate, if they pass through three distinct nodes of $\mathcal{G}$.
	\end{enumerate}
\end{defn}

Figure~\ref{fig:isomonodromy_cycles} shows the isomonodromy cycles.

\begin{figure}[H]
	\begin{center}
		\begin{tikzpicture}
		\vertex (a) at (0,0) {};
		\vertex (b) [right of = a] {};
		\path
		(a) edge [bend left = 20] (b)
		(b) edge [bend left = 20] (a);
		\end{tikzpicture} 
		\qquad 
		\begin{tikzpicture}
			\vertex (a) at (0,0) {};
			\vertex (b) [right of = a] {};
			\vertex (c) at (1,1.7172) {};
			\path
			(a) edge (b)
			(b) edge (c)
			(c) edge (a);
		\end{tikzpicture} 
		\qquad 
		\begin{tikzpicture}
			\vertex (a) at (0,0) {};
			\vertex (b) [right of = a] {};
			\vertex (c) [above of = b] {};
			\vertex (d) [above of = a] {};
			\path
			(a) edge (b)
			(b) edge (c)
			(c) edge (d)
			(d) edge (a);
		\end{tikzpicture} 
		\qquad 
		\begin{tikzpicture}
			\vertex(a) at (0,0) {};
			\vertex (b) [right of = a] {};
			\vertex (c) at (1,1.7172) {};
			\path
			(a) edge [bend left = 20] (c)
			(c) edge [bend left = 20] (a)
			(b) edge [bend left = 20] (c)
			(c) edge [bend left = 20] (b);
		\end{tikzpicture}
		\caption{Isomonodromy cycles. \label{fig:isomonodromy_cycles}}
	\end{center}
\end{figure}

More precisely Figure~\ref{fig:isomonodromy_cycles} depicts 2-cycles, 3-cycles, nondegenerate 4-cycles and finally degenerate 4-cycles, in order from left to right. 
The degenerate 4-cycles can be described as the glueing of two 2-cycles at some node, which we call their \emph{centre}. 
The other two nodes are called \emph{peripheral}, and they lie in one and the same part of $I$---looking at the indices of $W_i(4)$ in~\eqref{eq:classical_potentials}. 
Beware that there are no restriction for 2-cycles and 3-cycles (all such oriented cycles in $\mathcal{G}$ are by definition isomonodromy cycles), but the 4-cycles we consider are only those that appear in the isomonodromy potentials $W_i(4)$ of~\eqref{eq:classical_potentials}. 

\begin{corol}
\label{cor:invariance}
	The simply-laced Hamiltonians $H_i$ are invariant for the natural action of simultaneous change of basis on the spaces $V_i \subseteq V$.
\end{corol}

This follows from Proposition~\ref{prop:classical_hamiltonians_are_traces}, plus the fact that the trace is a class function. 
Thus the group $\widehat{H} \coloneqq \prod_{i \in I} \GL(V_i)$, acting in a Hamiltonian fashion via simultaneous conjugations on $(\mathbb{M},\omega_a)$, preserves $H_i = \Tr(W_i)$.

\begin{rem}
\label{rem:quantisation_local_systems}
	It follows that the simply-laced isomonodromy system descends to a nonautonomous Hamiltonian system defined on the $\widehat{H}$-Hamiltonian reduction of $(\mathbb{M},\omega_a)$ at any coadjoint orbit $\breve{\mathcal{O}} \subseteq \Lie(\widehat{H})^*$. Further, this complex symplectic quotient is isomorphic (as holomorphic symplectic manifold) to the moduli space $\mathcal{M}^*_{\dR}$ of isomorphism classes of meromorphic connections~\eqref{eq:meromorphic_connection_slqc} defined on a trivial holomorphic vector bundle over the Riemann sphere---with local normal form at infinity prescribed by the unramified irregular type $Q = \frac{Az^2}{2} + Tz$; see~\cite[\S 9]{boalch_2012_simply_laced_isomonodromy_systems}.
	
	The moduli spaces depend on a point on the base $\mathbf{B}$, and as one varies it they assemble into a symplectic fibration $\widetilde{\mathcal{M}}^*_{\dR} \to \mathbf{B}$. Then the reduction of the simply-laced isomonodromy system makes this fibration into a local system by defining there a flat (symplectic) Ehresmann connection: the isomonodromy connection, the pull-back of the wild nonabelian Gau\ss{}--Manin connection in the fibration of wild character varieties along the map that monodromy/Stokes data (see~\cite{boalch_2001_symplectic_manifolds_and_isomonodromic_deformations}).

	Hence by quantising the reduced simply-laced isomonodromy system we are in effect quantising the isomonodromy connection, i.e. replacing a certain local system of symplectic manifold with a flat bundle constructed by fibrewise (deformation) quantisation. Some example of reductions will be discussed in \S\S~\ref{sec:reduction_KZ},~\ref{sec:reduction_DMT} and~\ref{sec:reduction_FMTV}.
\end{rem}

Finally, there is a natural Lie bracket $\{\cdot,\cdot\}_{\mathcal{G}}$ on $\mathbb{C}\mathcal{G}_{\cycl}$, given by the necklace Lie algebra structure (see e.g.~\cite{bocklandt_le_bruyn_2002_necklace_lie_algebras, etingov_2007_calogero_moser_systems}).  

\begin{defn}
\label{def:necklace}
	Pick two oriented cycles $C_1, C_2$ in $\mathcal{G}$. The Lie bracket $\{C_1,C_2\}_{\mathcal{G}} \in \mathbb{C}\mathcal{G}_{\cycl}$ is the unique potential such that 
	\begin{equation}
		\Tr\big(\{C_1,C_2\}_{\mathcal{G}}\big) = \{\Tr(C_1),\Tr(C_2)\} \in A_0 \, .
	\end{equation}
\end{defn}

The necklace Lie bracket admits the following description in terms of cutting and glueing of arrows. Write $C_1 = \alpha_n \dotsc \alpha_1$ and $C_2 = \beta_m \dotsc \beta_1$ for two cycles. Then one constructs a new cycle for every pair $(\alpha_i,\beta_j)$ such that $\alpha_i$ is opposite to $\beta_j$, by deleting the pair of arrows and glueing together what is left to obtain a new oriented cycle. 

To see this graphically, fix a pair $i,j$ such that $\alpha_i$ is opposite to $\beta_j$, and introduce the notation $t(\alpha), h(\alpha) \in I$ for the \emph{tail} (starting node) and the \emph{head} (end node) of an arrow $\alpha$ in $\mathcal{G}$, respectively. Set then $a = t(\beta_{j-1})$, $b = h(\beta_{j-1}) = h(\alpha_i)$, $c = h(\beta_j) = h(\alpha_{i-1})$, $d = h(\beta_{j+1})$, $e = t(\alpha_{i-1})$, $f = h(\alpha_{i+1})$, which are all nodes in $\mathcal{G}$. Then the local picture before deleting arrows looks like Figure~\ref{fig:necklace_I}.

\begin{figure}[H]
	\begin{center}
		\begin{tikzpicture}
			\node (a) at (0,0.5) {};
			\vertex (b) [right of = a, label = below:$a$] {};
			\vertex (c) at (2,2) [label = left:$b$] {};
			\vertex (d) [right of = c, label = right:$c$] {};
			\vertex (e) at (4,0.5) [label = below:$d$] {};
			\node (f) [right of = e] {};
			\node (g) at (7,3) {};
			\vertex (h) at (6,2) [label = right:$e$] {};
			\vertex (i) at (4,2.5) [label = above:$c$] {};
			\vertex (j) [left of = i, label = above:$b$] {};
			\vertex (k) [left of = c, label = left:$f$] {};
			\node (l) at (-1,3) {};
			\path
			(a) edge [densely dashed] (b)
			(b) edge node [left]{$\beta_{j-1}$} (c)
			(c) edge node [below]{$\beta_j$} (d)
			(d) edge node [right]{$\beta_{j+1}$} (e)
			(e) edge [densely dashed] (f)
			(g) edge [densely dashed] (h)
			(h) edge node [above]{$\alpha_{i-1}$} (i)
			(i) edge node [above]{$\alpha_i$} (j)
			(j) edge node [above]{$\alpha_{i+1}$} (k)
			(k) edge [densely dashed] (l);
		\end{tikzpicture}
		\caption{Cut-and-glue rule for the necklace Lie bracket: before cutting a pair of opposite arrows. \label{fig:necklace_I}}
	\end{center}
\end{figure}

After cutting and glueing one has instead the situation of Figure~\ref{fig:necklace_II}.

\begin{figure}[H]
	\begin{center}
		\begin{tikzpicture}
			\node (a) at (-1,0.5) {};
			\vertex (b) [right of = a, label = below:$a$] {};
			\vertex (c) at (1,2.5) [label = above:$b$] {};
			\vertex (d) at (-1,2) [label = left:$f$] {};
			\node (e) at (-2,3) {};
			\node (f) at (6,3) {};
			\vertex (g) at (5,2) [label = right:$e$] {};
			\vertex (h) at (3,2.5) [label = above:$c$] {};
			\vertex (i) [below of = h, label = below:$d$] {};
			\node (j) at (5,0.5) {};
			\path
			(a) edge [densely dashed] (b)
			(b) edge node [left]{$\beta_{j-1}$} (c)
			(c) edge node [above]{$\alpha_{i+1}$} (d)
			(d) edge [densely dashed] (e)
			(f) edge [densely dashed] (g)
			(g) edge node [above]{$\alpha_{i-1}$} (h)
			(h) edge node [right]{$\beta_{j+1}$} (i)
			(i) edge [densely dashed] (j);
		\end{tikzpicture}
		\caption{Cut-and-glue rule for the necklace Lie bracket: after cutting a pair of opposite arrows. \label{fig:necklace_II}}
	\end{center}
\end{figure}

One repeats this operation for all the pairs $(\alpha_i,\beta_j)$ of opposite arrows, and then $\{C_1,C_2\}_{\mathcal{G}}$ is obtained by summing these cycles with weights depending on the Poisson bracket of the coordinate functions associated to the arrows that have been cut out (see the computations in Appendix~\ref{sec:appendix}).

\begin{rem}
\label{rem:necklace_bracket}
	Conceptually, what happens is the following. The invariant regular functions on $\mathbb{M}$ for the action of $\widehat{H}$ consist of the Poisson subalgebra $A_0^{\widehat{H}} \subseteq A_0$ generated by traces of oriented cycles (see~\cite[Theorem~1]{le_bruyn_procesi_1990_semisimple_representations_of_quivers}; this is a consequence of Weyl's first fundamental theorem of invariant theory). Hence we have an injective map  $\Tr \colon \mathbb{C}\mathcal{G}_{\cycl} \hookrightarrow A_0^{\widehat{H}}$, and Definition~\ref{def:necklace} says that it is a morphism of Lie algebras. Furthermore if one upgrades the necklace Lie bracket to a Poisson bracket on $\Sym(\mathbb{C}\mathcal{G}_{\cycl})$---by enforcing the Leibniz rule---then the trace becomes an isomorphism of Poisson algebras. This is a restatement of the fact that the $\widehat{H}$-invariant functions are polynomials of traces of oriented cycles.
	
	We will present a quantum counterpart of this in \S~\ref{sec:quantisation_potentials}, independently from (but possibly similarly to)~\cite{schedler_2005_a_hopf_algebra_quantizing_a_necklace_lie_algebra}. In op. cit. a more refined Hopf algebra quantisation is considered, whereas in this paper coproducts are not considered.
\end{rem}

\section{Quantisation of Poisson algebras}
\label{sec:quantisation_algebras}

Consider again the commutative graded Poisson algebra $A_0 = \Sym(\mathbb{M}^*)$ of algebraic functions on $\mathbb{M}$. 
In this section we recall how to define a filtered quantisation of $A_0$, which is a particular instance of deformation quantisation. 
All this material is standard, and thus we omit proofs (see e.g.~\cite{etingov_schiffmann_1998_lectures_on_quantum_groups, schedler_2012_deformations_of_algebras}).

\subsection{Formal deformation quantisation}

Let $\hslash$ be a formal variable, and $(B,\{\cdot,\cdot\})$ a commutative Poisson algebra.

\begin{defn}
\label{def:deformation_quantisation_algebras}
	A one-parameter formal deformation quantisation of $B$ is a topologically free $\mathbb{C}\llbracket\hslash\rrbracket$-algebra $\widehat{A}$, together with an isomorphism $\widehat{A} \big\slash\hslash \widehat{A} \simeq B$, such that
	\begin{equation}
	\{f_0,g_0\} = \frac{1}{\hslash^d} \big(fg - gf\big) + \hslash \widehat{A}, 
	\end{equation}
	for arbitrary lifts $f,g \in \widehat{A}$ of $f_0, g_0 \in B$, and for a suitable integer $d \geq 1$. 
	The canonical projection $\sigma \colon \widehat{A} \to B$ is called the semiclassical limit.
\end{defn}

Definition~\ref{def:deformation_quantisation_algebras} is the type of quantisation we look for, but in our case there is a simplification: it is possible to construct a formal deformation quantisation by applying the Rees construction to a filtered quantisation.  

\subsection{Filtered quantisation and the Rees construction}

Consider a filtered associative algebra $A = \bigcup_{k \geq 0} A_{\leq k}$. 
Recall that the associated graded $\gr(A)$ of $A$ is the graded associative algebra with graded components $\gr(A)_k = A_{\leq k} \big\slash A_{\leq k - 1}$, for $k \geq 0$---where $A_{\leq -1} \coloneqq (0)$. 
The product of $\gr(A)$ is induced from that of $A$ by defining it on representatives. 

Let now $B$ be a graded associative algebra---not necessarily Poisson or commutative at this stage.

\begin{defn}
	A filtered deformation of $B$ is a filtered associative algebra $A$ together with an isomorphism $\gr(A) \simeq B$ of graded associative algebras.
\end{defn}

Analogously to Definition~\ref{def:deformation_quantisation_algebras}, the associative product of a filtered algebra induces a Poisson bracket on the associated graded, provided the latter is commutative. 
To state this fact denote by $\sigma_p \colon A \to \gr(A)_p$ the canonical projections for $p \geq 0$.

\begin{prop}
	Let $A = \bigcup_{k \geq 0} A_{\leq k}$ be a filtered associative algebra such that $\gr(A)$ is commutative. 
	Then there exists a maximal integer $d \geq 1$ such that $\bigl[A_{\leq k},A_{\leq l}\bigr] \subseteq A_{\leq k + l - d}$ for all $k,l \geq 0$. Furthermore, there is a canonical Poisson bracket $\{\cdot,\cdot\}_A$ defined on $\gr(A)$ by the formula:
	\begin{equation}
		\label{eq:filtered_quantisation}
		\{\sigma_k(f),\sigma_l(g)\}_A = \sigma_{k+l-d}\big(fg - gf\big) \, ,
	\end{equation}
	where $f \in A_{\leq k}$ and $g \in A_{\leq l}$.
\end{prop}

Finally, analogously to Definition~\ref{def:deformation_quantisation_algebras}, a filtered quantisation is a filtered deformation plus a compatibility of Poisson brackets. 
Let then $(B,\{\cdot,\cdot\})$ be a graded commutative Poisson algebra. 

\begin{defn}
	\label{def:filtered_quantisation_algebras}
	A filtered quantisation of $(B,\{\cdot,\cdot\})$ is a filtered deformation $A$ of $B$ together with an isomorphism $\gr(A) \simeq B$ of graded Poisson algebras.
\end{defn}

There is now a universal construction to pass from a filtered quantisation as in Definition~\ref{def:filtered_quantisation_algebras} to a formal deformation quantisation as in Definition~\ref{def:deformation_quantisation_algebras}. 
Let again $A$ be a filtered associative algebra, and $\hslash$ a formal variable.

\begin{defn}
	\label{def:rees_algebra}
	The Rees algebra $\Rees(A)$ of $A$ is the $\mathbb{C}[\hslash]$-algebra defined by
	\begin{equation}
		\begin{split}
			\Rees &(A) \coloneqq \bigoplus_{k \geq 0} A_{\leq k} \cdot \hslash^k \subseteq A[\hslash] \, .
		\end{split}
	\end{equation}
\end{defn}

\begin{prop}
	\label{prop:rees}
	Assume that $A$ is a filtered quantisation of $(B,\{\cdot,\cdot\})$. 
	Then the following $\mathbb{C}\llbracket \hslash \rrbracket$-algebra defines a (one-parameter) formal deformation quantisation of the commutative Poisson algebra $(B,\{\cdot,\cdot\})$ (forgetting the grading):
	\begin{equation}
		\widehat{A} \coloneqq \Set{\sum_{k \geq 0} f_k\hslash^k | f_k \in A_{\leq k} \text{ for all } k \geq 0, \lim_{k \longrightarrow +\infty} \left(k - \left|f_k\right|\right) = +\infty} \subseteq A\llbracket\hslash\rrbracket \, ,
	\end{equation}
	where the nonnegative integer $\left|f\right| \coloneqq \min \{k \geq 0 \mid f \in A_{\leq k}\}$ is the order of the element $f \in A$.
\end{prop}

The proof consists in showing that the map 
\begin{equation}
	\label{eq:semiclassical_limit}
	\sigma \colon \sum_{k \geq 0} f_k \hslash^k \longmapsto \sum_{k \geq 0} \sigma_k\left(f_k\right)
\end{equation}
is surjective on $B$ with kernel $\hslash \widehat{A} \subseteq \widehat{A}$, hence induces an isomorphism $\widehat{A} \big\slash \hslash\widehat{A} \simeq B$---noting that the right-hand side of~\eqref{eq:semiclassical_limit} is a finite sum. 
Then the compatibility with the Poisson bracket, in the sense of Definition~\ref{def:deformation_quantisation_algebras}---with $d = 2$---follows from~\eqref{eq:filtered_quantisation}. 
The morphism~\eqref{eq:semiclassical_limit} is precisely the semiclassical limit $\sigma \colon \widehat{A} \to B$ of Definition~\ref{def:deformation_quantisation_algebras}.

\begin{rem}
\label{rem:algebraic_quantisation}
	Power series are considered to recover the general setting of formal deformation quantisation as in Definition~\ref{def:deformation_quantisation_algebras}. 
	In the filtered case however one can work with polynomials, so in particular the deformation parameter $\hslash$ can take on numerical values.
\end{rem}

Hence in brief to deformation-quantise a graded commutative Poisson algebra $B$ according to Definition~\ref{def:deformation_quantisation_algebras} one may look for a filtered quantisation $A$ as in Definition~\ref{def:filtered_quantisation_algebras}.
Then replacing this with (a completion of) its Rees algebra will recover the deformation parameter $\hslash$. 
Enforcing the quantum mechanical terminology one may refer to the elements of $A$ as \emph{quantum operators}.

\subsection{The Weyl algebra}
\label{sec:weyl}

We now define a filtered quantisation of the algebra of polynomial functions on a complex symplectic vector space $(V,\omega)$---seen as an affine space over $\mathbb{C}$. 
Denote $\{\cdot,\cdot\}$ the Poisson bracket on $B \coloneqq \Sym(V^*)$ associated to the symplectic form.

\begin{defn}
	\label{def:weyl_algebra}
	We define 
	\begin{equation}
		A = W \bigl( V^*,\{\cdot,\cdot\} \bigr) \coloneqq \Tens(V^*) \big\slash I \, ,
	\end{equation}
	where $\Tens(V^*)$ is the tensor algebra of the vector space $V^*$, and $I \subseteq \Tens(V^*)$ is the two-sided ideal generated by elements of the form
	\begin{equation}
		f \otimes g - g \otimes f - \{f,g\}, \qquad \text{for} \qquad f,g \in V^* \, .
	\end{equation}
\end{defn}

The noncommutative algebra $A$ inherits the quotient filtration of $\Tens(V^*)$ induced from the natural grading (the additive/Bernstein filtration). 
Then one can show that~\eqref{eq:filtered_quantisation} holds with $d = 2$, and that there is a canonical isomorphism $\gr(A) \simeq B$ of graded commutative Poisson algebras---where the symmetric algebra has the standard grading, see e.g.~\cite{schedler_2012_deformations_of_algebras}.
Hence $A$ is a filtered quantisation of $B$.

\begin{rem}
	There is also a dual realisation of $A$, called the \emph{Weyl algebra} of the symplectic vector space $(V,\omega)$ and noted $W(V,\omega)$.
	Analogously to $A$, it is defined as the quotient of $\Tens(V)$ with respect to the two-sided ideal generated by the elements 
	\begin{equation}
		x \otimes y - y \otimes x - \omega(x,y), \qquad \text{for} \qquad x,y \in V \, .
	\end{equation}
	Strictly speaking, the Weyl algebra $W(V,\omega)$ is a filtered deformation of $\Sym(V)$ rather then $\Sym(V^*)$, and that's why we gave the dual realisation in Definition~\ref{def:weyl_algebra}.
	Nonetheless, the linear isomorphism $V \simeq V^*$ induced by the symplectic pairing tautologically provides an isomorphism $A \simeq W(V,\omega)$ of filtered associative algebras, and thus we will not distinguish among the two. 
\end{rem}

There is a natural embedding $\iota \colon V^* \hookrightarrow A$ obtained from the composition of the inclusion $V^* \hookrightarrow \Tens(V^*)$ with the canonical projection $\Tens(V^*) \to A$, and we write $\widehat{f} \coloneqq \iota(f)$ for the image of a linear function in this natural embedding.
Hereafter we want to interpret $\iota$ as a quantisation of linear functions on $V$, and to do this we define the semiclassical limit following the prescription of \S~\ref{sec:quantisation_algebras}. 

Let $\Rees(A)$ be the Rees algebra of $A$ as in definition~\ref{def:rees_algebra}, and $\widehat{A}$ its power series version as in Proposition~\ref{prop:rees}. 
The arrow $\iota$ is tautologically upgraded to an embedding $\widehat{\iota} \colon V^* \hookrightarrow \Rees(A) \subseteq \widehat{A}$ via $\widehat{\iota}(f) \coloneqq \widehat{f} \cdot \hslash$. Moreover, by definition the semiclassical limit $\sigma \colon \widehat{A} \to B$ of~\eqref{eq:semiclassical_limit} is a left inverse to $\widehat{\iota}$.

\begin{defn}
	\label{def:quantisation_functions}
	A quantisation of a function $f \in B$ is an element of the affine subspace $\sigma^{-1}(f) \subseteq \widehat{A}$.
\end{defn}

In particular the element $\widehat{f} \cdot \hslash$ is a quantisation of the linear function $f \in V^*$, and $\iota$ is given by the composition
\begin{center}
	\begin{tikzpicture}[> = to]
		\node (a) at (-0.25,0) {$V^*$};
		\node (b) at (2,0) {$\Rees(A)$};
		\node (c) at (5,0) {$A$};
		\node (e) at (5.25,-0.15) {$\mathbb{,}$};
		\path
		(a) edge node [above] {$\widehat{\iota}$} (b)
		(b) edge node [above] {$\ev_{\hslash = 1}$} (c);
	\end{tikzpicture}
\end{center}
where $\ev_{\hslash = 1}$ is the evaluation of polynomials at $\hslash = 1$---the quotient modulo the ideal generated by $\hslash - 1$.

\begin{defn}
	\label{def:canonical_quantisation_weyl}
	The element $\iota(f) = \widehat{f} \in A$ is the Weyl quantisation of the linear function $f \in V^*$.
\end{defn}

This provides a presentation of the Weyl algebra as soon as a basis of $V$ is chosen. 
Namely, if $X_1, \dotsc, X_m \colon V \to \mathbb{C}$ are linear coordinate functions associated to a basis of $V$ then their Weyl quantisations generate $A$ as a $\mathbb{C}$-algebra. 
More precisely, $A$ is isomorphic to the quotient of the free algebra generated by the symbols $\widehat{X}_i$ with commutation relations 
\begin{equation}
	\bigl[\widehat{X}_i,\widehat{X}_j\bigr] = \{X_i,X_j\} \in A_{\leq 0} = \mathbb{C} \, .
\end{equation}
Moreover, if $n \geq 0$ and $i_1, \dotsc, i_n \in \{1,\dotsc,m\}$ then the product $\prod_{j = 1}^n \widehat{X}_{i_j} \in A_{\leq n}$ has order equal to $n$. 

\begin{rem} 
	Note that by Definition~\ref{def:quantisation_functions} the element $\prod_{j = 1}^n \widehat{X}_{i_j} \cdot \hslash^n \in \widehat{A}$ is a quantisation of the monomial $f = \prod_{j = 1}^n X_{i_j} \in B$, but this is not canonically attached to $f$ as the order for the (classical) variables is immaterial.
	More explicitly, if $\Sigma_n$ is the symmetric group on $n$ objects and $\tau \in \Sigma_n$ a permutation then one can consider the element
	\begin{equation}
		\prod_{j = 1}^n \widehat{X}_{i_{\tau(j)}} \cdot \hslash^n \in \widehat{A} \, ,
	\end{equation}
	which is also a quantisation of $f$, and the lack of a natural way to pick one ordering results in no-go obstructions to quantisation à la Groenewold--Van Hove. 

	If one tries to bypass this issue by considering the symmetrisation map 
	\begin{equation}
		f \longmapsto \frac{1}{n!} \sum_{\tau \in \Sigma_n} \prod_{j = 1}^n \widehat{X}_{i_{\tau(j)}} \cdot \hslash^n \, ,
	\end{equation}
	which a priori solves the problem of choosing an ordering, then the crucial property of flatness of Theorem~\ref{thm:classical_flatness} is not preserved (as the ``classical'' Poisson bracket and the ``quantum'' commutator do not match up).
\end{rem}

In the next section we will describe a procedure to quantise the simply-laced isomonodromy systems~\eqref{eq:imd_connection} by preserving (strong) flatness, overcoming the aforementioned obstructions.

\section{Quantisation of potentials}
\label{sec:quantisation_potentials}

We now discuss how to quantise the traces of the isomonodromy cycles of Definition~\ref{def:isomonodromy_cycles}, which leads to a quantisation of the simply-laced isomonodromy system~\eqref{eq:imd_connection}. 

Start by applying all the material of the previous \S~\ref{sec:quantisation_algebras} to the situation of the simply-laced isomonodromy systems. 
Let $\mathcal{G}$ be the $k$-complete graph on nodes $I = \coprod_{j \in J} I^j$, $\{V_i\}_{i \in I}$ a family of finite-dimensional vector spaces and $a \colon J \hookrightarrow \mathbb{C} \cup \{\infty\}$ a reading.
Define the symplectic vector space $(\mathbb{M},\omega_a)$ as in \S~\ref{sec:classical_systems}. 
Then the filtered quantisation $A = W(\mathbb{M}^*,\{\cdot,\cdot\})$ of the classical algebra $A_0 = \Sym(\mathbb{M}^*)$ is defined in~\ref{def:weyl_algebra}---where $\{\cdot,\cdot\}$ is the symplectic Poisson bracket---together with the semiclassical limit $\sigma \colon \widehat{A} \to A_0$ of~\eqref{eq:semiclassical_limit}, whose domain is the deformation quantisation of $A_0$ as in Definition~\ref{def:deformation_quantisation_algebras}).

To get an explicit description of $A$ we may do as instructed above. 
Choose bases for the spaces $V_i$ for all $i \in I$, and consider the coordinate functions $X \longmapsto X^{\alpha}_{ij}$ on $\mathbb{M}$, where $\alpha$ runs through the arrows of $\mathcal{G}$ and $i,j$ are indices for the coefficients of the matrix $X^{\alpha} \colon V_{t(\alpha)} \to V_{h(\alpha)}$ in the given bases. 
Then a generic element of $A$ is a polynomial in the Weyl quantisations $\widehat{X}^{\alpha}_{ij}$.

We now push the interpretation of the classical Hamiltonians as traces of potentials one step further: we attach quantum operators to decorated oriented cycles, in the same way in which (invariant) functions on $\mathbb{M}$ are attached to ordinary oriented cycles. 

\begin{defn}
	An anchored cycle $\widehat{C}$ is an oriented cycle in $\mathcal{G}$ with a starting arrow fixed, called the anchor of $\widehat{C}$. 
	This will be denoted by underlining the anchor:
	\begin{equation}
		\widehat{C} = \alpha_n \dotsm \underline{\alpha_1} \, ,
	\end{equation}
	where $\alpha_1, \dotsc, \alpha_n$ are composable arrows in $\mathcal{G}$. 
\end{defn}

Now let $C = \alpha_n \dotsm \alpha_1$, be an oriented cycle, and consider its trace
\begin{equation}
	\Tr(C) = \sum_{k_1,\dotsc,k_n} X^{\alpha_n}_{k_nk_{n-1}} \dotsm X^{\alpha_1}_{k_1k_n} \in A_0 \, ,
\end{equation}
where $k_1,\dotsc,k_n$ are suitable indices. 
If one anchors $C$ at $\alpha_1$, i.e. if one considers the anchored cycle $\widehat{C} \coloneqq \alpha_n \dotsm \underline{\alpha_1}$, then the element
\begin{equation}
	\sum_{k_1,\dotsc,k_n} \widehat{X}^{\alpha_n}_{k_nk_{n-1}} \dotsm \widehat{X}^{\alpha_1}_{k_1k_n} \in A
\end{equation}
is uniquely determined. 
In turn we define this element to be the trace of the anchored cycle $\widehat{C}$.

\begin{defn}
	The trace of the anchored cycle $\widehat{C} = \alpha_n \dotsm \underline{\alpha_1}$ is 
	\begin{equation}
		\Tr(\widehat{C}) \coloneqq \sum_{k_1,\dotsc,k_n} \widehat{X}^{\alpha_n}_{k_nk_{n-1}} \dotsm \widehat{X}^{\alpha_1}_{k_1k_n} \in A \, .
	\end{equation}
	The $\hslash$-deformed trace of the anchored cycle $\widehat{C}$ is $\Tr_{\hslash}(\widehat{C}) \coloneqq \Tr(\widehat{C}) \cdot \hslash^n \in \Rees(A) \subseteq \widehat{A}$, where $\hslash$ is a formal variable.
\end{defn}

\begin{rem}
	More intrinsically, consider the $A$-valued matrix 
	\begin{equation}
		\widehat{X}^{\alpha_l} \dotsm \widehat{X}^{\alpha_1} \in A \otimes \End(V_i) \, ,
	\end{equation}
	where $i \coloneqq t(\alpha_1) \in I$ is the tail of the anchor of $\widehat{C}$, i.e. the starting node of the anchored cycle. 
	The coefficients of the matrices $\widehat{X}^{\alpha_l}, \dotsc, \widehat{X}^{\alpha_1}$ are Weyl quantisations of coordinate functions on $\mathbb{M}$, and taking a trace amounts to contracting $V_i$ against $V_i^*$. 
	Finally we multiply the result by the correct power of $\hslash$, i.e. by the length of the cycle.
\end{rem}

Importantly, two different anchored cycles may define the same element of the Weyl algebra. 
This happens when their two underlying cycles coincide under a cyclic permutation of their arrows in which no arrow is swapped with its opposite, because all the coefficients of the matrix $\widehat{X}^{\alpha}$ commute with all those of $\widehat{X}^{\beta}$ if and only if $\alpha$ and $\beta$ are not opposite arrows. 

This observation motivates the next definitions.

\begin{defn}
	Let $\widehat{C} = \alpha_n \dotsm \underline{\alpha_1}$ be an anchored cycle in $\mathcal{G}$. 
	An admissible permutation of the arrows of $\widehat{C}$ consists in dividing the word $\alpha_n \dotsm \alpha_1$ in two subwords 
	\begin{equation}
		A = \alpha_n \dotsm \alpha_{n-i}, \qquad B = \alpha_{n-i-1} \dotsm \alpha_1
	\end{equation}
	such that no arrow in $A$ has its opposite in $B$, and in swapping $A$ and $B$. 
	This yields a new anchored cycle $\widehat{C}' = \alpha_{n-i-1} \dotsm \alpha_1 \alpha_l \dotsm \underline{\alpha_{n-i}}$ which is said to be equivalent to $\widehat{C}$.
\end{defn}

\begin{defn}
	A quantum cycle in $\mathcal{G}$ is an anchored cycle defined up to admissible permutations of its arrows. We define $\widehat{\mathbb{C}\mathcal{G}}_{\cycl}$ to be the complex vector space spanned by quantum cycles in $\mathcal{G}$, and we call its elements quantum potentials. 
	Furthermore, we denote $\sigma_{\mathcal{G}} \colon \widehat{\mathbb{C}\mathcal{G}}_{\cycl} \to \mathbb{C}\mathcal{G}_{\cycl}$ the map that forgets the anchor, and we say that a quantum potential $\widehat{W}$ is a quantisation of the potential $W$ if $\sigma_{\mathcal{G}}(\widehat{W}) = W$. 

	The length $l(C) \geq 0$ of a quantum cycle $\widehat{C}$ is defined as that of its underlying oriented cycle $C = \sigma_{\mathcal{G}}(\widehat{C})$.
\end{defn}

There exists now a well defined linear map $\Tr_{\hslash} \colon \widehat{\mathbb{C}\mathcal{G}}_{\cycl} \to \widehat{A}$ landing inside $\Rees(A) \subseteq \widehat{A}$, together with a square which is commutative by construction: 
\begin{center}
	\begin{tikzpicture}[> = to]
		\node (a) at (0,0) {$\mathbb{C}\mathcal{G}_{\cycl}$};
		\node (b) at (3,0) {$A_0$};
		\node (c) [above of = a] {$\widehat{\mathbb{C}\mathcal{G}}_{\cycl}$};
		\node (d) [above of = b] {$\widehat{A}$};
		\node (i) at (1.5,1) {$\circlearrowleft$};
		\path
		(a) edge node [below]{$\Tr$} (b)
		(c) edge node [left]{$\sigma_{\mathcal{G}}$} (a)
		(c) edge node [above]{$\Tr_{\hslash}$} (d)
		(d) edge node [right]{$\sigma$} (b);
	\end{tikzpicture}
\end{center}

Hence the semiclassical limit of $\Tr_{\hslash}(\widehat{C}) \in \widehat{A}$ equals $\Tr(C) \in A_0$ whenever $\sigma_{\mathcal{G}}(\widehat{C}) = C$, and thus one may quantise the cycle $C$ to quantise the function $\Tr(C)$. 
In turn, quantising a cycle means by definition constructing a quantum potential which projects back to it by forgetting anchors.

This is what we now do for the isomonodromy cycles of Figure~\ref{fig:isomonodromy_cycles}. 

\begin{defn}
	\label{def:quantisation_imd_potentials}
	The quantisation of a 3-cycle and a nondegenerate 4-cycles is the quantum cycle obtained from any choice of anchoring.\footnote{Since 3-cycles and nondegenerate 4-cycles do not contain pairs of opposite arrows, the actual choice of anchor is immaterial: all the resulting anchored cycles will be equivalent, i.e. they will define the same quantum cycle inside $\widehat{\mathbb{C}\mathcal{G}}_{\cycl}$.}~The quantisation of a degenerate 4-cycles is the quantum cycle obtained by anchoring at either of the two arrows coming out of the centre. The quantisation of a 2-cycle $C = $~
	\begin{tikzpicture}
		\vertex (a) at (0,0) {};
		\vertex (b) at (1,0) {};
		\path
		(a) edge [bend left = 20] (b)
		(b) edge [bend left = 20] (a);
	\end{tikzpicture} 
	is the following combination of quantum 2-cycles, where we draw the tails of the anchors as black nodes: 
	\begin{figure}[H]
		\begin{center}
			$\widehat{C} = \frac{1}{2}\Bigg($\begin{tikzpicture}
				\vertex (a) at (0,0) [fill = black] {};
				\vertex (b) at (1,0) {};
				\path
				(a) edge [bend left = 20] (b)
				(b) edge [bend left = 20] (a);
				\end{tikzpicture} $+$ \begin{tikzpicture}
					\vertex (a) at (0,0) {};
					\vertex (b) at (1,0) [fill = black] {};
					\path
					(a) edge [bend left = 20] (b)
					(b) edge [bend left = 20] (a);
					
				\end{tikzpicture}$\Bigg)$
			\caption{Quantisation of 2-cycles. \label{fig:quantisation_2_cycle}}
		\end{center}
	\end{figure}
	
	The quantum cycles thus defined in $\mathcal{G}$ are called the \emph{quantum isomonodromy cycles}.
\end{defn}

Figure~\ref{fig:quantum_isomonodromy_cycles} shows the quantum isomonodromy cycles, where as above (and hereafter) we draw the tails of the anchors as black nodes.

\begin{figure}[H]
	\begin{center}
		\begin{tikzpicture}
			\vertex (a) at (0,0) [fill = black] {};
			\vertex (b) [right of = a] {};
			\path
			(a) edge [bend left = 20] (b)
			(b) edge [bend left = 20] (a);
		\end{tikzpicture} 
		\qquad 
		\begin{tikzpicture}
			\vertex (a) at (0,0) [fill = black] {};
			\vertex (b) [right of = a] {};
			\vertex (c) at (1,1.7172) {};
			\path
			(a) edge (b)
			(b) edge (c)
			(c) edge (a);
		\end{tikzpicture} 
		\qquad 
		\begin{tikzpicture}
			\vertex (a) at (0,0) [fill = black] {};
			\vertex (b) [right of = a] {};
			\vertex (c) [above of = b] {};
			\vertex (d) [above of = a] {};
			\path
			(a) edge (b)
			(b) edge (c)
			(c) edge (d)
			(d) edge (a);
		\end{tikzpicture} \qquad 
		\begin{tikzpicture}
			\vertex(a) at (0,0) {};
			\vertex (b) [right of = a] {};
			\vertex (c) at (1,1.7172) [fill = black] {};
			\path
			(a) edge [bend left = 20] (c)
			(c) edge [bend left = 20] (a)
			(b) edge [bend left = 20] (c)
			(c) edge [bend left = 20] (b);
		\end{tikzpicture}
		\caption{Quantum isomonodromy cycles. \label{fig:quantum_isomonodromy_cycles}}
	\end{center}
\end{figure}

From left to right one sees quantum 2-cycles, quantum 3-cycles, nondegenerate quantum 4-cycles and degenerate quantum 4-cycles. 
This should be compared with Figure~\ref{fig:isomonodromy_cycles}, which shows the (classical) isomonodromy cycles. 

\begin{rem}
	The quantisation of degenerate 4-cycles is well defined, since changing the order of the arrows coming out of the central node amounts to an admissible permutation of the arrows. 

	To see this explicitly, denote again $\gamma^*$ the opposite of any arrow $\gamma$ in $\mathcal{G}$. Then one may write a degenerate 4-cycle as $C = \beta^*\beta\alpha^*\alpha$, where $\alpha,\beta$ are the two distinct arrows of $C$ coming out of the centre. 
	The two possible anchors at the centre yield $\widehat{C}_1 = \beta^*\beta\alpha^*\underline{\alpha}$ and $\widehat{C}_2 = \alpha^*\alpha\beta^*\underline{\beta}$, which are equivalent under the admissible permutation swapping the two 2-cycles $\alpha^*\alpha$ and $\beta^*\beta$: doing this does not change the relative order of the opposite pairs $(\alpha,\alpha^*), (\beta,\beta^*)$. 
\end{rem}

\section{Universal simply-laced quantum connection}
\label{sec:quantum_connections}
    
Consider again the isomonodromy potentials $W_i \in \mathbb{C}\mathcal{G}_{\cycl}$ such that $H_i = \Tr(W_i)$ is the simply-laced Hamiltonian at the node $i \in I$.	

\begin{defn}
	\label{def:quantum_Hamiltonians}
	The quantum isomonodromy potential $\widehat{W}_i \colon \mathbf{B} \to \widehat{\mathbb{C}\mathcal{G}}_{\cycl}$ at the node $i \in I$ is defined by quantising all the isomonodromy cycles of $W_i$ according to Definition~\ref{def:quantisation_imd_potentials}. 
	The universal simply-laced quantum Hamiltonian $\widehat{H}_i \colon \mathbf{B} \to \widehat{A}$ is the $\hslash$-deformed trace of the quantum isomonodromy potential at the node $i \in I$.
\end{defn}

By construction $\widehat{H}_i = \Tr_{\hslash}(\widehat{W}_i) \colon \mathbf{B} \to \widehat{A}$ is a quantisation of the simply-laced Hamiltonian $H_i \colon \mathbf{B} \to A_0$ according to Definition~\ref{def:quantisation_functions}, because the identity $\sigma(\widehat{H}_i) = H_i$ is true pointwise on $\mathbf{B}$. 

\begin{defn}
	\label{def:slqc}
	The universal simply-laced quantum connection $\widehat{\nabla}$ is the connection on the trivial bundle of noncommutative algebras $\widehat{A} \times \mathbf{B} \to \mathbf{B}$ defined by
	\begin{equation}
		\widehat{\nabla} \coloneqq d - \widehat{\varpi}, \qquad \text{where} \qquad  \widehat{\varpi} \coloneqq \sum_{i \in I} \widehat{H}_i dt_i \in \Omega^1(\mathbf{B},\widehat{A}) \, ,
	\end{equation}
	letting $\widehat{A}$ act on itself by left multiplication.
\end{defn}

The main result of this paper is the following.

\begin{thm}
\label{thm:quantum_flatness}
	The universal simply-laced quantum connection is strongly flat, i.e.
	\begin{equation}
		\bigl[\widehat{H}_i,\widehat{H}_j\bigr] = 0 = \frac{\partial \widehat{H}_i}{\partial t_j} - \frac{\partial \widehat{H}_j}{\partial t_i}, \qquad \text{for all} \qquad i,j \in I \, .
	\end{equation}
\end{thm}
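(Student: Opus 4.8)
The plan is to prove the two halves of strong flatness separately, exploiting the cycle-theoretic formalism set up in \S\ref{sec:classical_potentials}--\S\ref{sec:quantisation_potentials}. For the ``time-derivative'' identity $\partial_{t_j}\widehat{H}_i = \partial_{t_i}\widehat{H}_j$, I would first observe that, exactly as in the classical case, differentiating a quantum IMD potential with respect to a time variable only rescales the weights of its (anchored) cycles and never creates or destroys arrows; hence $\Tr(\partial_{t_j}\widehat{W}_i) = \partial_{t_j}\Tr(\widehat{W}_i) = \partial_{t_j}\widehat{H}_i$, and by injectivity of $\Tr:\widehat{\comp\mathcal{G}_{\cycl}}\hookrightarrow A$ it suffices to check $\partial_{t_j}\widehat{W}_i = \partial_{t_i}\widehat{W}_j$ in $\widehat{\comp\mathcal{G}_{\cycl}}$. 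This is the lift of Proposition~\ref{prop:direct_verification}: the only nonzero contributions come from the $2$-cycles $W_i(2)$ and the degenerate $4$-cycles inside $W_i(4)$, and one checks that the relevant anchored $2$-cycle $\widehat{\alpha_{ij}\alpha_{ji}}$ and anchored degenerate $4$-cycle (anchored at its centre, by definition of the quantisation) each coincide with the one appearing in $\partial_{t_i}\widehat{W}_j$ up to an admissible permutation; the symmetry $a_i = a_j$ when $I^i = I^j$ is again what makes the $4$-cycle terms match. This half should be essentially mechanical.

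For the commutator identity $[\widehat{H}_i,\widehat{H}_j] = 0$, I would reduce it to a statement about brackets of quantum potentials. The first step is a quantum analogue of Proposition~\ref{prop:classicalbrackets}: the commutator $[\Tr(\widehat{C}_1),\Tr(\widehat{C}_2)]$ in $A$ should be expressible as $\Tr$ of an explicit bracket $\{\widehat{C}_1,\widehat{C}_2\}$ of anchored cycles, obtained by the same splicing rule as the necklace bracket — delete a pair of antiparallel arrows $\alpha_i = \beta_j^*$ and glue the two remaining strands — but now keeping track of anchors and of the order in which the splicings are performed, with the weights dictated by the Weyl relations $X\otimes Y - Y\otimes X = \omega_a(X,Y)$. (This likely coincides with the ``quantum necklace'' or double-bracket formalism; in any case it can be proved by the same direct matrix-entry expansion promised for Proposition~\ref{prop:classicalbrackets} in the appendix, now using $[\widehat{X}^\alpha_{k,l},\widehat{X}^\beta_{p,q}]$ instead of Poisson brackets.) Granting this, $[\widehat{H}_i,\widehat{H}_j] = \Tr\{\widehat{W}_i,\widehat{W}_j\}$, and by injectivity of $\Tr$ it suffices to show $\{\widehat{W}_i,\widehat{W}_j\} = 0$ in $\widehat{\comp\mathcal{G}_{\cycl}}$.

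The computation then splits over the types of IMD cycles: one must pair each of the cycle classes ($2$-cycles, $3$-cycles, nondegenerate and degenerate $4$-cycles) in $\widehat{W}_i$ against each in $\widehat{W}_j$ and show the total vanishes. Two simplifications guide the bookkeeping. First, $\{\widehat{C}_1,\widehat{C}_2\}$ is nonzero only when $\widehat{C}_1$ and $\widehat{C}_2$ share at least one antiparallel pair of arrows, which already kills many pairings (e.g. $3$-cycles attached at nodes of $I^j$ versus cycles living entirely off $I^j$). Second, the semiclassical shadow of the whole identity is precisely $\{H_i,H_j\} = 0$ from Theorem~\ref{thm:classical_flatness}, so the leading-order terms cancel automatically; what remains to control are the genuinely quantum corrections, which by Proposition~\ref{prop:change_anchor} measure the failure of anchors/orderings to commute and are lower order in cycle length. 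I expect the main obstacle to be exactly this: organising the quantum corrections — those coming from the degenerate $4$-cycles (glued $2$-cycles) and from the $\frac12$-symmetrised anchoring of the $2$-cycles — and verifying that the careful anchoring choices in Definition~\ref{def:quantum_Hamiltonians} are exactly what makes them cancel in pairs, rather than the cruder Weyl-symmetric quantisation. Concretely I would set up the proof over \S\ref{sec:flatness_I}--\S\ref{sec:flatness_II} as a case analysis on the unordered pair of cycle types, treating the ``classical-shadow'' cancellations quickly and devoting the bulk of the argument to the $2$-cycle/$4$-cycle and $4$-cycle/$4$-cycle interactions where the anchor data is essential.
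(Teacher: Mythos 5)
Your treatment of the time-derivative half matches the paper's \S\ref{sec:flatness_I}: differentiate, reduce by injectivity of $\Tr$ to a statement in $\widehat{\comp\mathcal{G}_{\cycl}}$, and cite Prop.~\ref{prop:direct_verification}.

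The commutator half has a genuine gap. You posit, as a blanket ``quantum necklace bracket'' analogous to Prop.~\ref{prop:classicalbrackets}, that for \emph{any} two quantum cycles $[\Tr(\widehat{C}_1),\Tr(\widehat{C}_2)]$ is again $\Tr$ of an element of $\widehat{\comp\mathcal{G}_{\cycl}}$. That is false, and the paper flags it explicitly at the start of \S\ref{sec:flatness_II}: the classical proof uses commutativity of $A_0$ to reorder the leftover arrows after deleting the antiparallel pair $\alpha_i=\beta_j^*$, and in the Weyl algebra the leftover block can only be rearranged into a walk if it contains no remaining antiparallel pair; otherwise the commutator lands in the higher filtration of $A^{\mathcal{G}}$, not in $\widehat{\comp\mathcal{G}_{\cycl}}$. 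The paper therefore proves Prop.~\ref{prop:quantum_commutators} only for the IMD cycles, via Lem.~\ref{lem:quantum_commutators} (one factor a $2$-cycle or without antiparallel pairs, which handles every IMD pairing except degenerate $4$-cycle against degenerate $4$-cycle) plus two hand-checked brackets. Your plan needs some such input before it can start, and your appeal to Prop.~\ref{prop:change_anchor} as the measure of a ``lower-order'' quantum correction points the wrong way: that proposition says a change of anchor costs a \emph{product} of two shorter quantum cycles, i.e.\ a higher-degree element of $A^{\mathcal{G}}$, not something that drops out by order. Relatedly, the paper does not argue ``the classical shadow cancels, then estimate the quantum corrections''; it shows that, under the anchoring choices of Def.~\ref{def:quantum_Hamiltonians}, the quantum commutators of cancelling classical pairs are \emph{identical} elements of $\widehat{\comp\mathcal{G}_{\cycl}}$, so the classical cancellation transfers with no residue. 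Establishing this is the point of the eight case-by-case checks in \S\ref{sec:last_verifications}; you correctly identify the anchoring as crucial, but you leave the decisive step---that distinct troublesome intersections produce the same anchored output---as a hoped-for cancellation rather than a verified identity.
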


The connection of Definition~\ref{def:slqc} is universal since one may replace the regular representation by any complex left $\widehat{A}$-module $\rho \colon \widehat{A} \to \End_{\mathbb{C}}(\mathcal{H})$, and thus obtain a new connection 
\begin{equation}
	\widehat{\nabla}_{\rho} = d - \sum_{i \in I} \rho(\widehat{H}_i) dt_i \, ,
\end{equation}
on the vector bundle $\mathcal{H} \times \mathbf{B} \to \mathbf{B}$. Because of Theorem~\ref{thm:quantum_flatness} all connections obtained from $\widehat{\nabla}$ in this way will be strongly flat. 

Since the quantum Hamiltonians live in $\Rees(A) \subseteq \widehat{A}$, an important class of modules arises from left modules for the Weyl algebra $A$, letting the Rees algebra act via the evaluation morphism $\ev_{\hslash = 1} \colon \Rees(A) \to A$. 
A particular important example is obtained by considering the regular representation of the Weyl algebra on itself.

\begin{defn}
\label{def:level_1_slqc}
	The simply-laced quantum connection is the connection on the trivial bundle of noncommutative algebras $A \times \mathbf{B} \to \mathbf{B}$ induced from the universal simply-laced quantum connection from the representation $\rho_1 \colon \Rees(A) \to \End_{\mathbb{C}}(A)$ defined by
	\begin{equation}
		\rho_1 \left(\sum_k f_k \hslash^k\right) (X) \coloneqq \left(\sum_k f_k \right) \cdot X \, .
	\end{equation}
	The functions $\rho_1(\widehat{H}_i) = \Tr(\widehat{W}_i)$ are the simply-laced quantum Hamiltonians.
\end{defn}

The simply-laced quantum connection will be later related to the KZ Hamiltonians (in \S~\ref{sec:reduction_KZ}), the DMT Hamiltonians (in \S~\ref{sec:reduction_DMT}) and the FMTV Hamiltonians (in \S~\ref{sec:reduction_FMTV}).

\begin{rem}
	For the sake of a geometric example consider a Lagrangian splitting $\mathbb{M} \simeq T^*M$, where $M \coloneqq \Rep(\mathcal{Q},V)$ is the space of representation of a subquiver $\mathcal{Q} \subseteq \mathcal{G}$ containing the arrows with a given positive orientation. 
	Taking the real part of the symplectic from produces a real cotangent bundle, and one may take $\mathcal{H} \coloneqq C^{\infty}L^2(M,\mathbb{C})$ to be the space of smooth square-summable complex functions on $M$ (with respect to the Lebesgue measure on $M$), on which $A$ acts with the standard \emph{Schr\"odinger position representation}: if a set of real Darboux coordinates $(q_j,p_j)$ is chosen on $T^*M$, then the position $q_j$ acts via the function multiplication $\mu_{q_j}$, and the momentum $p_j$ via the derivative $-i\partial_{q_j}$.

	In the language of geometric quantisation this means considering the prequantisable symplectic manifold $(T^*M,\sum_j dq_j \wedge dp_j)$, and taking prequantum data consisting of the trivial complex line bundle $T^*M \times \mathbb{C} \to T^*M$ equipped with the tautological Hermitian metric and the prequantum connection $\nabla = d - i\sum_j q_jdp_j$ defined by the Liouville potential of the exact symplectic form. 
	Then one performs geometric quantisation (at quantum level $\hslash = 1$) with respect to the real polarisation defined by the vertical cotangent fibres. 
\end{rem}

The next section is dedicated to the proof of Theorem~\ref{thm:quantum_flatness}.

\section{Proof of strong flatness}
\label{sec:flatness}

\subsection{Time-dependent term}
\label{sec:flatness_I}

We show here that $d \widehat{\varpi} = 0$. 

\begin{prop}
	\label{prop:quantum_flatness_I}
	One has:
	\begin{equation}
		\partial_{t_i}\widehat{H}_j - \partial_{t_j}\widehat{H}_i = 0, \qquad \text{for all } i,j \in I \, .
	\end{equation}
\end{prop}

This follows from a lemma.

\begin{lem}
	\label{lem:flatness_I}
	Let $W_i \colon \mathbf{B} \to \mathbb{C}\mathcal{G}_{\cycl}$ be a classical isomonodromy potential. Then
	\begin{equation}
		\partial_{t_j}\widehat{W}_i = \widehat{\partial_{t_j}W_i}, \qquad \text{and} \qquad \partial_{t_j}\Tr_{\hslash}\big(\widehat{W}_i\big) = \Tr_{\hslash}\big(\partial_{t_j}\widehat{W}_i\big) \, ,
	\end{equation}
	for all $i,j \in I$.
\end{lem}

\begin{proof}[Proof of Lemma~\ref{lem:flatness_I}]
	These identities follow from construction: the quantisation of classical cycles and the $\hslash$-deformed trace of quantum cycles do not depend on $\mathbf{B}$. 
	Note that taking derivatives does not change the type of cycles that make up the isomonodromy potential $W_i$, but only modifies the coefficients of the linear combination; hence the quantisation $\widehat{\partial_{t_j}W_i}$ of $\partial_{t_j}W_i$ is well defined in the sense of Definition~\ref{def:quantisation_imd_potentials}. 
\end{proof}

\begin{proof}[Proof of Proposition~\ref{prop:quantum_flatness_I}]
	Using the second set of identities of Lemma~\ref{lem:flatness_I}, it is enough to verify that one has $\partial_{t_i}\widehat{W_j} - \partial_{t_j}\widehat{W_i} = 0$ for all $i,j \in I$, because the $\hslash$-deformed trace of the left-hand side is precisely the difference $\partial_{t_i}\widehat{H}_j - \partial_{t_j}\widehat{H}_i$. 
	To prove this we appeal to Theorem~\ref{thm:classical_flatness}, borrowing the identity $\partial_{t_i}H_j = \partial_{t_j}H_i$, which is equivalent to $\partial_{t_i}W_j = \partial_{t_j}W_i$ because $\Tr \colon \mathbb{C}\mathcal{G}_{\cycl} \to A_0$ is injective. 
	This implies
	\begin{equation}
		\widehat{\partial_{t_i}W_j} = \widehat{\partial_{t_j}W_i} \, ,
	\end{equation}
	and the first set of identity of Lemma~\ref{lem:flatness_I} concludes the proof.
\end{proof}

\subsection{Vanishing commutators}
\label{sec:flatness_II}

We now show here that $\bigl[\widehat{\varpi},\widehat{\varpi}\bigr] = 0$, i.e. that the universal simply-laced quantum Hamiltonians commute. 
By bilinearity, this reduces to the problem of computing commutators of the form
\begin{equation}
	\bigl[\Tr_{\hslash}(\widehat{C}_1),\Tr_{\hslash}(\widehat{C}_2)\bigr] = \bigl[\Tr(\widehat{C}_1),\Tr(\widehat{C}_2)\bigr] \cdot \hslash^{l(C_1) + l(C_2)} \in \widehat{A} \, ,
\end{equation}
where $\widehat{C}_1, \widehat{C}_2$ are quantum isomonodromy cycles of lengths $l(C_1)$ and $l(C_2)$ respectively. 

To understand this we focus on the element $\bigl[\Tr(\widehat{C}_1),\Tr(\widehat{C}_2)\bigr]$ inside the Weyl algebra, and check that it can still be expressed as the trace of a quantum potential.
Importantly, to show this one can no longer use the commutativity of the associative product, but nonetheless this nontrivial property holds for quantum isomonodromy cycles.

\begin{prop}
	\label{prop:quantum_commutators}
	If $\widehat{C}_1$ and $\widehat{C}_2$ are quantum isomonodromy cycles then there exists a quantum potential $\widehat{W} \in \widehat{\mathbb{C}\mathcal{G}}_{\cycl}$ such that 
	\begin{equation}
		\bigl[\Tr(\widehat{C}_1),\Tr(\widehat{C}_2)\bigr] = \Tr(\widehat{W}) \in A \, .
	\end{equation}
	Moreover, $\widehat{W}$ can be found so that it is obtained from a suitable anchoring of all cycles of the Necklace Lie bracket 
	\begin{equation}
		\big\{\sigma_{\mathcal{G}}(\widehat{C}_1),\sigma_{\mathcal{G}}(\widehat{C}_2)\big\}_{\mathcal{G}} \in \mathbb{C}\mathcal{G}_{\cycl} \, .
	\end{equation}
\end{prop}

The quantum potential $\widehat{W}$ in the statement is then uniquely determined by $\widehat{C}_1$ and $\widehat{C}_2$ as an element of $\widehat{\mathbb{C}\mathcal{G}}_{\cycl}$, since we consider it up to admissible permutations of its arrows.

\begin{defn}
	The commutator of the quantum isomonodromy cycles $\widehat{C}_1$ and $\widehat{C}_2$ is the quantum potential $\bigl[\widehat{C}_1,\widehat{C}_2\bigr] \in \widehat{\mathbb{C}\mathcal{G}}_{\cycl}$ defined by Proposition~\ref{prop:quantum_commutators}. 
	It satisfies
	\begin{equation}
		\bigl[\Tr(\widehat{C}_1),\Tr(\widehat{C}_2)\bigr] = \Tr\Big(\bigl[\widehat{C}_1,\widehat{C}_2\bigr]\Big) \, .
	\end{equation}
\end{defn}

Because of Proposition~\ref{prop:quantum_commutators} the commutator is a quantisation of the necklace Lie bracket of the underlying cycles, i.e. 
\begin{equation}
	\sigma_{\mathcal{G}} \big(\bigl[\widehat{C}_1,\widehat{C}_2\bigr]\big) = \big\{\sigma_{\mathcal{G}}(\widehat{C_1}),\sigma_{\mathcal{G}}(\widehat{C_2})\big\}_{\mathcal{G}} \, .
\end{equation}
This identity should be compared with~\eqref{eq:filtered_quantisation}, replacing degrees with lengths of cycles. 
This shows how the construction of quantum potentials points towards a filtered quantisation of $\Sym(\mathbb{C}\mathcal{G}_{\cycl})$ (cf. Remark~\ref{rem:necklace_bracket}).

The proof of Proposition~\ref{prop:quantum_commutators} breaks up into three Lemmas, whose proofs have been postponed to the Appendix~\ref{sec:appendix}.

\begin{lem}
	\label{lem:quantum_commutators}
	Pick two quantum cycles $\widehat{C}_1$, $\widehat{C}_2$, with underlying cycles $C_1$, $C_2$. Assume that one of $\widehat{C}_1$, $\widehat{C}_2$ is a 2-cycle, or that one of them does not contain pairs of opposite arrows. Then Proposition~\ref{prop:quantum_commutators} holds.
\end{lem}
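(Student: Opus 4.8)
The strategy is to reduce the lemma to an explicit local calculation inside the Weyl algebra $A$, organised according to where the antiparallel arrows that get contracted sit. Recall that by definition $\Tr\big([\widehat C_1,\widehat C_2]\big) = \big[\Tr(\widehat C_1),\Tr(\widehat C_2)\big]$, and that the quantum trace map $\Tr\colon \widehat{\comp\mathcal{G}_{\cycl}} \hookrightarrow A$ intertwines the semiclassical limit with the classical trace. So the two claims to establish are: (i) the commutator, a priori a general element of $A^{\mathcal G}$, actually lands in the image of $\widehat{\comp\mathcal{G}_{\cycl}}$, i.e.\ it is a linear combination of traces of \emph{single} anchored cycles rather than of products of several; and (ii) forgetting anchors gives back the necklace bracket $\{C_1,C_2\}$ computed in Prop.~\ref{prop:classicalbrackets}. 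I would treat (ii) as essentially automatic once (i) is in hand: since $A$ is a filtered quantisation of $A_0$ with $\sigma[\widehat x,\widehat y]=\{x,y\}$ by \eqref{eq:filtered_quantisation}, applying $\sigma$ to $\big[\Tr(\widehat C_1),\Tr(\widehat C_2)\big]$ yields $\{\Tr(C_1),\Tr(C_2)\} = \Tr\{C_1,C_2\}$, so whatever anchored cycles appear in (i), their semiclassical limit must reassemble to $\{C_1,C_2\}$; in particular the length bookkeeping $l = l(C_1)+l(C_2)-2$ is forced.

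For (i) I would expand $\big[\Tr(\widehat C_1),\Tr(\widehat C_2)\big]$ using the defining relations of the Weyl algebra applied to the matrix entries $\widehat X^{\alpha}_{k,k'}$: two such entries commute unless the arrows are antiparallel, in which case their commutator is a scalar (the appropriate weight coming from $\omega_a$) times a Kronecker delta pairing up indices. The commutator therefore expands as a sum over pairs $(\alpha_p,\beta_q)$ of antiparallel arrows, one from each cycle, and each term is — after the delta-functions collapse the shared indices — a product of the leftover matrix entries, i.e.\ the trace of the cycle obtained by deleting that antiparallel pair and splicing the two cycles together at the shared node, exactly as in the necklace-bracket picture. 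The point of the hypothesis is to guarantee that \emph{no residual internal contraction survives}: if $\widehat C_2$ contains no antiparallel pair at all, then the single deletion uses up the only $\alpha/\alpha^*$ incidence available and the spliced word is already a genuine cycle with well-defined anchoring (any anchor works, since there are no antiparallel pairs to permute past); if instead $\widehat C_2$ is a $2$-cycle $\alpha^*\underline\alpha$ (up to the symmetrisation $\tfrac12(\cdots+\cdots)$), the only possible contractions are of $\alpha$ or $\alpha^*$ against an arrow of $\widehat C_1$, and after deleting the matched pair the remaining single arrow of the $2$-cycle gets inserted into $\widehat C_1$, again producing one honest anchored cycle (with the anchor inherited in a controlled way from $\widehat C_1$, or from the symmetrised $2$-cycle). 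In both regimes the output is manifestly a linear combination of traces of single anchored cycles, never of products, which is precisely the content of the lemma.

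The main obstacle, and the reason the general case of Prop.~\ref{prop:quantum_commutators} is deferred, is the following: when \emph{both} cycles carry antiparallel pairs and have length $\geq 3$, a single term of the expansion can still contain a leftover antiparallel incidence, and — crucially — the ordering of the arrows in the spliced word need not be an admissible permutation of any fixed anchored cycle, so a priori one only gets an element of $A^{\mathcal G}_{\leq l}$ rather than of $\widehat{\comp\mathcal{G}_{\cycl}}$. In the two special cases of the lemma this cannot happen for the combinatorial reasons above, so the difficulty is entirely bypassed; but the bookkeeping that makes this precise — tracking which arrow ends up where, and verifying that the induced word on the spliced cycle really is accessible from a single anchor by admissible permutations — is exactly the routine-but-fiddly computation I would relegate to \S~\ref{sec:appendix}.
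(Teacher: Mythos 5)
Your proposal follows essentially the same strategy as the paper's proof in \S~\ref{sec:appendix}: expand $\big[\Tr(\widehat C_1),\Tr(\widehat C_2)\big]$ using the Weyl relations, organise the result as a sum over pairs of antiparallel arrows (one from each cycle), and use the hypothesis to guarantee that the spliced word reassembles into a single contiguous anchored cycle rather than a product of several; your deferral of claim (ii) to the semiclassical compatibility is also consistent, since the explicit expansion already forces every output cycle to have length $l(C_1)+l(C_2)-2$, so the grading map lands the result on $\{C_1,C_2\}$ as required. One small slip: in the case where $\widehat C_2$ has no antiparallel pair, your parenthetical ``any anchor works, since there are no antiparallel pairs to permute past'' is not quite right --- the spliced cycle will in general inherit antiparallel pairs from $\widehat C_1$, so its anchoring \emph{is} significant. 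What the hypothesis on $\widehat C_2$ actually buys you, and what the paper makes precise with its two Leibniz developments, is that the remaining letters of the hypothesis-satisfying cycle (the block with the hole, sandwiched inside the other cycle's letters) can be freely commuted past one another to close the hole into a contiguous subword; the anchor of the resulting single cycle is then determined by the computation, not arbitrary.
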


This includes all the commutators of quantum isomonodromy cycles, except if both are degenerate quantum 4-cycles. 
Hence we now address that case, decomposing it into two subcases: either the degenerate 4-cycles have different centres, or their centres are one and the same node of $\mathcal{G}$.

\begin{rem}
	As before, all forthcoming pictures sketch a ``local'' situation on the graph $\mathcal{G}$, meaning that we only draw the nodes and arrows involved in the commutators and forget about the rest of $\mathcal{G}$. 
	These sketches summarise longer computations in noncommutative variables, which are explicitly given in the Appendix~\ref{sec:appendix}. 
	The simplicity of these figures should be contrasted with the complexity of the explicit computations; indeed simplifying such computations was one of our main motivations for developing this cycle-theoretic calculus.

	Secondarily, a choice of Darboux coordinates on $(\mathbb{M},\omega_a)$ can be made in order to simplify all formul\ae{}: doing so one can assume that all coefficients in the necklace Lie bracket either equal $+1$ or $-1$, and the sign can be fixed by taking an orientation for $\mathcal{G}$.
\end{rem}

The first case we consider is the commutator of two degenerate quantum 4-cycles with different centres.

\begin{lem}
	\label{lem:vanishing_commutator_degenerate_4_cycles}
	Pick distinct nodes $a,b,c,d \in I$ so that the sequences of nodes $(a,b,a,c)$ and $(a,c,d,c)$ define two degenerate quantum 4-cycles with centres $a$ and $c$---respectively. Then their commutator vanishes, as shown in figure~\ref{fig:vanishing_commutator_degenerate_4_cycles}.

	\begin{figure}[H]
		\begin{center}
			\begin{tikzpicture}
				\draw (0.5,0) -- (0,0) -- (0,4) -- (0.5,4) [-];
				\vertex (a) at (1,1) [label = below:$a$, fill = black] {};
				\vertex (b) [right of = a, label = below:$b$] {};
				\vertex (c) [above of = a, label = above:$c$] {};
				\vertex (d) at (4,1) [label = below:$a$] {};
				\vertex (e) [above of = d, label = above:$c$, fill = black] {};
				\vertex (f) [right of = e, label = above:$d$] {};
				\node (g) at (3.5,1) {\textbf{,}};
				\draw (6.5,0) -- (7,0) -- (7,4) -- (6.5,4) [-];
				\draw (7.7,1.9) -- (8.2,1.9) [-]; 
				\draw (7.7,2.1) -- (8.2,2.1) [-];	
				\draw (9,2) ellipse (5pt and 6.5pt);
				\path
				(a) edge [bend left = 20] (b)
				(b) edge [bend left = 20] (a)
				(a) edge [bend left = 20] (c)
				(c) edge [bend left = 20] (a)
				(e) edge [ bend left = 20] (d)
				(d) edge [bend left = 20] (e)
				(e) edge [bend left = 20] (f)
				(f) edge [bend left = 20] (e);
			\end{tikzpicture}
			\caption{Vanishing commutator of two degenerate quantum 4-cycles centred at different nodes. \label{fig:vanishing_commutator_degenerate_4_cycles}}
		\end{center}
	\end{figure}
\end{lem}

The commutator is indeed obtained as by a suitable anchoring of the necklace Lie bracket, since this necklace Lie bracket also vanishes. 
Hence Proposition~\ref{prop:quantum_commutators} is verified in this case.

The remaining situation is that in which the centres of the degenerate quantum 4-cycles coincide, and exactly one peripheral node is in common.

\begin{lem}
	\label{lem:commutator_degenerate_4_cycles}
	Pick two degenerate quantum 4-cycles centred at $j \in I$, with exactly one peripheral node in common. Then one can choose an orientation of $\mathcal{G}$ so that their commutator equals the quantum potential shown in Figure~\ref{fig:commutator_degenerate_4_cycles}.

	\begin{figure}[H]
		\begin{center}
			\begin{tikzpicture}
				\draw (0.5,0) -- (0,0) -- (0,4) -- (0.5,4) [-];
				\vertex (a) at (1,1) {};
				\vertex (b) at (2,2) [label=right:$j$, fill = black] {};
				\vertex (c) at (2,3.4) {};
				\vertex (d) [right of = c] {};
				\vertex (e) at (4,2) [label = left:$j$, fill = black] {};
				\vertex (f) at (5,1) {};
				\draw (5.5,0) -- (6,0) -- (6,4) -- (5.5,4) [-];
				\draw (6.8,1.9) -- (7.3,1.9) [-]; 
				\draw (6.8,2.1) -- (7.3,2.1) [-];
				\vertex (g) at (8,1) [label = left:$3$] {};
				\vertex (h) at (9,2) [fill = black] {};
				\vertex (i) at (9,3.4) [label = above:$2$] {};
				\vertex (j) at (10,1) [label = right:$1$] {};
				\draw (10.8,2) -- (11.2,2) [-];
				\vertex (k) at (12,1) [label = left:$2$] {};
				\vertex (l) at (13,2) [fill = black] {};
				\vertex (m) at (13,3.4) [label = above:$3$] {};
				\vertex (n) at (14,1) [label = right:$1$] {};
				\node (o) at (3,2) {\textbf{,}};
				\path
				(a) edge [bend left = 20] (b)
				(b) edge [bend left = 20] (a)
				(c) edge [bend left = 20] (b)
				(b) edge [bend left = 20] (c)
				(d) edge [bend left = 20] (e)
				(e) edge [bend left = 20] (d)
				(f) edge [bend left = 20] (e)
				(e) edge [bend left = 20] (f)
				(g) edge [bend left = 20] (h)
				(h) edge [bend left = 20] (g)
				(i) edge [bend left = 20] (h)
				(h) edge [bend left = 20] (i)
				(j) edge [bend left = 20] (h)
				(h) edge [bend left = 20] (j)
				(k) edge [bend left = 20] (l)
				(l) edge [bend left = 20] (k)
				(m) edge [bend left = 20] (l)
				(l) edge [bend left = 20] (m)
				(n) edge [bend left = 20] (l)
				(l) edge [bend left = 20] (n);
			\end{tikzpicture}
			\caption{Commutator of two degenerate quantum 4-cycles with centres in common. \label{fig:commutator_degenerate_4_cycles}}
		\end{center}
	\end{figure}

	The numbers at the peripheral nodes in Figure~\ref{fig:commutator_degenerate_4_cycles} indicate the order in which they are touched in the cycle, starting from the centre.
\end{lem}

The cut-and-glue rule for the necklace Lie bracket then shows that Proposition~\ref{prop:quantum_commutators} is verified in this last case.

Now we conclude the proof of the main Theorem~\ref{thm:quantum_flatness}.

\begin{prop}
	\label{prop:quantum_flatness_II}
	One has $\bigl[\widehat{H}_i,\widehat{H}_j\bigr] = 0$ for all $i, j \in I$.
\end{prop}

\begin{proof}
	Note that if $\widehat{C}_1$ and $\widehat{C}_2$ are quantum isomonodromy cycles then one has 
	\begin{equation}
		\Tr_{\hslash} \Big(\bigl[\widehat{C}_1,\widehat{C}_2\bigr]\Big) = \Tr\Big(\bigl[\widehat{C}_1,\widehat{C}_2\bigr]\Big) \cdot \hslash^{l(C_1) + l(C_2) - 2} \, ,
	\end{equation}
	where $l(C_k)$ is the length of $\widehat{C}_k$ for $k = 1,2$. 
	Indeed, by Proposition~\ref{prop:quantum_commutators} all quantum cycles in the commutator have length $l(C_1) + l(C_2) - 2$, since this is the length of the cycles in the necklace Lie bracket $\big\{\sigma_{\mathcal{G}}(\widehat{C}_1),\sigma_{\mathcal{G}}(\widehat{C}_2)\big\}_{\mathcal{G}}$. 
	Hence 
	\begin{equation}
		\Tr_{\hslash} \Big(\bigl[\widehat{C}_1,\widehat{C}_2\bigr]\Big) \cdot \hslash^2 = \bigl[\Tr_{\hslash}(\widehat{C}_1),\Tr_{\hslash}(\widehat{C}_2)\bigr] \, ,
	\end{equation}
	and by bilinearity this yields
	\begin{equation}
		\Tr_{\hslash} \Big(\bigl[\widehat{W}_i,\widehat{W}_j\bigr]\Big) \cdot \hslash^2 = \bigl[\Tr_{\hslash}\big(\widehat{W}_i\big),\Tr_{\hslash}\big(\widehat{W}_j\big)\bigr] = \bigl[\widehat{H}_i,\widehat{H}_j\bigr] \in \widehat{A} \, .
	\end{equation}
	Thus it is enough to show that $\bigl[\widehat{W}_i,\widehat{W}_j\bigr] = 0$ as quantum potential, for all $i,j \in I$. 
	Moreover, in the quantum potential $\widehat{W}_i$ we can replace all quantisations of 2-cycles (as shown in Figure~\ref{fig:quantisation_2_cycle}) with a single quantum 2-cycle anchored at the node $i$, since doing this amounts to adding a constant, i.e. a central term. 
	Finally, in all computations of quantum commutators one is free to move the anchor of 3-cycles and nondegenerate 4-cycles, since they are all equivalent.

	Start then by decomposing the classical isomonodromy potentials $W_i, W_j$ into a linear combination of isomonodromy cycles, so that $W_i = \sum_k c_k C_k$ and $W_j = \sum_l d_l D_l$.
	Expanding their necklace Lie bracket by bilinearity yields a linear combination of potentials:
	\begin{equation}
	\label{eq:sum_of_potentials_classic}
		0 = \{W_i,W_j\}_{\mathcal{G}} = \sum_{k,l} c_kd_l \{C_k,D_l\}_{\mathcal{G}} \, .
	\end{equation}
	Putting together all equal cycles on the right-hand side of~\eqref{eq:sum_of_potentials_classic} then results in a vanishing linear combination of cycles:
	\begin{equation}
		0 = \{W_i,W_j\}_{\mathcal{G}} = \sum_m e_m E_m \in \mathbb{C}\mathcal{G}_{\cycl} \, ,
	\end{equation}
	where now all the $E_m$ are distinct. 
	Then one has necessarily $e_m = 0$ for all $m$, since any finite family of distinct cycles in $\mathcal{G}$ is by definition free inside $\mathbb{C}\mathcal{G}_{\cycl}$. 

	On the quantum side one finds an analogous expansion:
	\begin{equation}
		\label{eq:sum_of_potentials_quantum}
		\bigl[\widehat{W}_i,\widehat{W}_j\bigr] = \sum_{k,l} c_kd_l \bigl[\widehat{C}_k,\widehat{D}_l\bigr] \, ,
	\end{equation}
	with $\widehat{C}_k$ (resp. $\widehat{D}_l$) being the quantisations of $C_k$ (resp. of $D_l$), according to Definition~\ref{def:quantisation_imd_potentials}---with the aforementioned modification for the quantisation of 2-cycles. 
	Moreover, thanks to Proposition~\ref{prop:quantum_commutators} one knows that $\bigl[\widehat{C}_k,\widehat{D}_l\bigr]$ is obtained by suitably anchoring all the cycles of $\{C_k,D_l\}_{\mathcal{G}}$. 
	One would now like to have 
	\begin{equation}
		\bigl[\widehat{W}_i,\widehat{W}_j\bigr] = \sum_m e_m \widehat{E}_m \in \widehat{\mathbb{C}\mathcal{G}}_{\cycl} \, ,
	\end{equation}
	with the same constants $e_m \in \mathbb{C}$ and for some quantisation $\widehat{E}_m$ of $E_m$.
	This happens if the following holds: whenever two cycles are equal on the right hand-side of~\eqref{eq:sum_of_potentials_classic}, then their corresponding quantum cycles on the right-hand side of~\eqref{eq:sum_of_potentials_quantum} are also equal, i.e. they are anchored cycles with equivalent anchors. 

	A systematic way of showing this is to list all cycles coming out of the necklace Lie bracket of two isomonodromy cycles (Figure~\ref{fig:isomonodromy_cycles}) having opposite arrows in common. 
	For the resulting cycles which do not contain a pair of opposite arrows there is nothing to show, so we proceed by cataloguing all cycles which do contain such a pair, dividing them into two lists. 

	The first list appears in Figure~\ref{fig:leftover_cycles_1}.

	\begin{figure}[H]
		\begin{center}
			\begin{tikzpicture}
				\vertex (a) at (0,0) {};
				\vertex (b) [right of = a] {};
				\vertex (g) [above of = b] {};
				\vertex (d) [above of = a] {};
				\path 
				(a) edge (b)
				(b) edge (g)
				(g) edge (a) 
				(a) edge [bend left = 20] (d)
				(a) edge [<-, bend right = 20] (d);
			\end{tikzpicture} \qquad \begin{tikzpicture}
				\vertex (a) at (0,0) {};
				\vertex (b) [right of = a] {};
				\vertex (c) [above of = a] {};
				\vertex (d) [right of = c] {};
				\vertex (e) at (1,1.4) {};
				\path 
				(b) edge [bend left = 20] (e)
				(b) edge [<-, bend right = 20] (e)
				(b) edge (d) 
				(d) edge (c)
				(c) edge (a)
				(a) edge (b);
			\end{tikzpicture} \qquad \begin{tikzpicture}
				\vertex (a) at (0,0) {};
				\vertex (b) [right of = a] {};
				\vertex (c) [above of = a] {};
				\vertex (d) [right of = c] {};
				\path 
				(a) edge [bend left = 20] (c)
				(a) edge [<-, bend right = 20] (c)
				(b) edge (d) 
				(d) edge (c)
				(c) edge (a)
				(a) edge (b);
			\end{tikzpicture}
			\caption{Cycles with pairs of opposite arrows, arising from the necklace Lie bracket of isomonodromy cycles: first list. \label{fig:leftover_cycles_1}}
		\end{center}
	\end{figure}

	The cycles of Figure~\ref{fig:leftover_cycles_1} can be described as the glueing of two subcycles at some node, which we call their centre. 
	One can then compute in noncommutative variables (cf. Appendix~\ref{sec:appendix}) to show that such cycles can always be anchored at their central node after computing commutators, up to changing anchors for 3-cycles and nondegenerate 4-cycles. 
	Then the two leftmost cycles of figure~\ref{fig:leftover_cycles_1} correspond to equal quantum cycles on the right-hand side of~\eqref{eq:sum_of_potentials_quantum}, as it was to be shown. 
	
	The rightmost cycle of Figure~\ref{fig:leftover_cycles_1} requires further care, since the two anchors at its centre give different quantum 6-cycles; nonetheless, up to changing the anchor of nondegenerate 4-cycles one can show that the 2-subcycle always comes first after computing commutators, and thus such quantum cycles are equal.

	The remaining cycles which contain opposite arrows are shown in Figure~\ref{fig:leftover_cycles_2}.

	\begin{figure}[H]
		\begin{center}
			\begin{tikzpicture}
				\vertex (a) at (0,0) {};
				\vertex (b) [right of = a] {};
				\vertex (c) [above right of = a] {};
				\path 
				(a) edge [bend left = 20] (b)
				(a) edge [<-, bend right = 20] (b)
				(a) edge [bend left = 20] (c)
				(a) edge [<-, bend right = 20] (c);
			\end{tikzpicture} \qquad \begin{tikzpicture}
				\vertex (a) at (0,0) {};
				\vertex (b) [right of = a] {};
				\vertex (c) [above of = a] {};
				\vertex (d) [above of = b] {};
				\path 
				(a) edge [bend left = 20] (b)
				(a) edge [<-, bend right = 20] (b)
				(a) edge [bend left = 20] (c)
				(a) edge [<-, bend right = 20] (c)
				(d) edge [bend left = 20] (c)
				(d) edge [<-, bend right = 20] (c);
			\end{tikzpicture} \qquad \begin{tikzpicture} 
				\vertex (g) at (8,1) {};
				\vertex (h) at (9,2) {};
				\vertex (i) at (9,3.4) {};
				\vertex (j) at (10,1) {};
				\path
				(g) edge [bend left = 20] (h)
				(h) edge [bend left = 20] (g)
				(i) edge [bend left = 20] (h)
				(h) edge [bend left = 20] (i)
				(j) edge [bend left = 20] (h)
				(h) edge [bend left = 20] (j);
			\end{tikzpicture}
			\caption{Cycles with pairs of opposite arrows, arising from the necklace Lie brackets of isomonodromy cycles: second list. \label{fig:leftover_cycles_2}}
		\end{center}
	\end{figure}

	The two leftmost cycles of Figure~\ref{fig:leftover_cycles_2} are dealt with by a uniqueness argument. 
	Namely, the cycle $C =$ 
	\begin{tikzpicture}
		\vertex (a) at (0,0) {};
		\vertex (b) at (1,0) {};
		\vertex (c) at (0.7,0.7) {};
		\path 
		(a) edge [bend left = 20] (b)
		(a) edge [<-, bend right = 20] (b)
		(a) edge [bend left = 20] (c)
		(a) edge [<-, bend right = 20] (c);
	\end{tikzpicture} 
	only arises from the necklace Lie bracket of two 3-cycles $C_1$ and $C_2$ having opposite orientations. 
	The cycles $C_1$ and $C_2$ are uniquely determined by the three nodes of $C$, and thus on the right-hand side of~\eqref{eq:sum_of_potentials_classic} the cycle $C$ appears exactly twice with opposite coefficients---otherwise the sum would not vanish.
	But then up to changing the anchors of $C_1$ and $C_2$ (which are all equivalent) the same happens on the right-hand side of~\eqref{eq:sum_of_potentials_quantum}, because the quantisation does not tamper with the coefficients of 3-cycles. 
	
	Similarly, the 6-cycle $C' = $
	\begin{tikzpicture}
		\vertex (a) at (0,0) {};
		\vertex (b) at (1,0) {};
		\vertex (c) at (0,1) {};
		\vertex (d) at (1,1){};
		\path 
		(a) edge [bend left = 20] (b)
		(a) edge [<-, bend right = 20] (b)
		(a) edge [bend left = 20] (c)
		(a) edge [<-, bend right = 20] (c)
		(d) edge [bend left = 20] (c)
		(d) edge [<-, bend right = 20] (c);
	\end{tikzpicture} 
	only arises as the necklace Lie bracket of two nondegenerate 4-cycles with opposite orientations, which are moreover uniquely determined by the nodes of $C'$, and the same argument applies.
	
	Finally we have to consider the rightmost cycle of Figure~\ref{fig:leftover_cycles_2}, i.e. the 6-cycle coming out the commutator of two degenerate 4-cycles with centre in common, as depicted in Lemma~\ref{lem:commutator_degenerate_4_cycles}. 
	The existence of a common centre implies that $i$ and $j$ lie in the same part of $I$, which we denote $\overline{I} \coloneqq I_i = I_j$. 
	Let then $k \in I$ be the common centre and consider the complete bipartite subgraph $\mathcal{G}' \subseteq \mathcal{G}$ on nodes $I' = \{k\} \coprod \overline{I}$. 
	If one defines $V' \coloneqq V_k \oplus \bigoplus_{l \in \overline{I}} V_l$, then a choice of Darboux coordinates on the representation space 
	\begin{equation}
		\Rep(\mathcal{G}',V') \simeq T^*\left(\bigoplus_{l \in \overline{I}} \Hom(V_l,V_k)\right)
	\end{equation}
	turns this situation into that of \S~\ref{sec:reduction_KZ}, where we consider the simply-laced quantum connection of a star-shaped quiver. 
	Hence the proof of Proposition~\ref{prop:flatness_SLQC_star} concludes the proof of Proposition~\ref{prop:quantum_flatness_II}.
\end{proof}

\begin{rem}
	If one denotes $\widehat{C}_{ij}$ the degenerate quantum 4-cycle with centre $k$ that passes through the nodes $i \neq j \in \overline{I}$, then one can show that 
	\begin{equation}
		\bigl[\widehat{C}_{ij},\widehat{C}_{il} + \widehat{C}_{jl}\bigr] = 0                                                                                                                                                                   \end{equation}
	for all distinct nodes $i,j,l \in \overline{I}$. 
	We see these relations as a lift of Kohno's relations for the flatness of the Knizhnik--Zamolodchikov connection, that is 
	\begin{equation}
		\bigl[\widehat{\Omega}^{(ij)},\widehat{\Omega}^{(il)} + \widehat{\Omega}^{(jl)}\bigr] = 0 \, ,
	\end{equation}
	where the operators $\widehat{\Omega}^{(ij)}$ are defined in \S~\ref{sec:KZ}. 
\end{rem}

This remark prompts us to explaining why the simply-laced quantum connection is a lifted version of some important quantum connections including Knizhnik--Zamolodchikov.

	\section{The KZ connection and the star}
\label{sec:reduction_KZ}

In this section we show that the Knizhnik--Zamolodchikov connection (KZ)~\cite{knizhnik_zamolodchikov_1984_wess_zumino_witten} is a reduction of the simply-laced quantum connection for the degenerate reading of a star with no irregular times attached. 
By definition a \emph{star} (or a \emph{star-shaped graph}) is a complete bipartite graph having one part with a single node, called the \emph{centre}; the other nodes are \emph{peripheral}.

\subsection{Simply-laced quantum connection of a star}
\label{sec:slqc_star}

We start by specialising the general construction of \S\S~\ref{sec:classical_systems} and~\ref{sec:quantum_connections} to define the universal simply-laced quantum connection of a star with no irregular times. 

Take the set $J$ to have cardinality $k = 2$, with degenerate reading 
\begin{equation}
	a \colon J \longmapsto a(J) = \{\infty,0\} \subseteq \mathbb{C} \cup \{\infty\} \, .
\end{equation}
This means that the highest irregular coefficient $A$ of the meromorphic connections~\eqref{eq:meromorphic_connection_slqc} vanishes, and that $T = T^0$. 
We further assume that $T^0 = 0$, so that~\eqref{eq:meromorphic_connection_slqc} becomes a logarithmic connection. 
The base space of times is now $\mathbf{B} = \mathbb{C}^m \setminus \{\diags\}$, i.e. the space of variations of the positions of $m \geq 1$ simple poles in $\mathbb{C}$.

If we identify $J$ with $a(J)$, then the part $I^0 \subseteq I = I^0 \coprod I^{\infty}$ within the set of nodes $I$ of the complete bipartite graph $\mathcal{G}$ is a singleton, and we denote $0$ its only element.
Hence $\mathcal{G}$ is a star with centre $0$, and with $m = |I^{\infty}|$ peripheral nodes, such as in Figure~\ref{fig:KZ_DMT}. 

\begin{figure}[H]
	\begin{center}
		\begin{tikzpicture}
			\vertex (a) at (0,0) {};
			\vertex (b) [above of = a] {};
			\vertex (c) [right of = a] {};
			\vertex (d) [left of = a] {};
			\vertex (e) [below of = a] {};
			\path
			(a) edge [-] (b)
			(a) edge [-] (c)
			(a) edge [-] (d)
			(a) edge [-] (e);
		\end{tikzpicture}
		\caption{A star-shaped graph (here with $m = 4$ peripheral nodes). \label{fig:KZ_DMT}}
	\end{center}
\end{figure}

Attach finite dimensional vector spaces $\{V_0,V_i\}_{i \in I^{\infty}}$ to the nodes of $\mathcal{G}$, and set $W^{\infty} = \bigoplus_{i \in I^{\infty}} V_i$, $U^{\infty} = W^0 = V_0$, and $V = W^{\infty} \oplus W^0$. 
The space of representation is
\begin{equation}
	\mathbb{M} = \Rep(\mathcal{G},V) = \Hom(W^{\infty},W^0) \oplus \Hom(W^0,W^{\infty}) \, ,
\end{equation}
and comes equipped with the symplectic form $\omega_a = \Tr(dQ \wedge dP)$, where $Q \colon W^{\infty} \longrightarrow W^0$ and $P \colon W^0 \longrightarrow W^{\infty}$. 
We write $Q_i$ for the component of $Q$ in $V_i^* \otimes W^0$, and $P_i$ for the component of $P$ in $(W^0)^* \otimes V_i$. 

As explained above, these data parametrise particular examples of the meromorphic connections~\eqref{eq:meromorphic_connection_slqc}, namely the logarithmic connections
\begin{equation}
	\label{eq:kz_meromorphic_connection}
	\nabla = d - \big(Q(z - T^{\infty})^{-1}P\big)dz = d - \sum_{i \in I^{\infty}} \frac{Q_iP_i}{z - t_i}dz \, ,
\end{equation}
on the trivial vector bundle $W^0 \times \mathbb{C} P^1 \longrightarrow \mathbb{C} P^1$, where $\{t_i\}_{i \in I^{\infty}} \in \mathbf{B}$, and the off-diagonal term $\Gamma = 
\begin{pmatrix} 
	& Q \\
	P & 
\end{pmatrix}
\in \mathbb{M}$ encodes the residues.

The isomonodromic deformations of the Fuchsian systems~\eqref{eq:kz_meromorphic_connection} are controlled by the simply-laced Hamiltonian system~\eqref{eq:imd_connection}. 
If one sets $\widetilde{PQ} = \ad_{T^{\infty}}^{-1}\bigl[dT^{\infty},PQ\bigr]$, then it becomes
\begin{equation}
	\varpi = \frac{1}{2}\Tr\big(\widetilde{PQ}PQ\big) \, .
\end{equation}
This is a nonautonomous Hamiltonian system on the trivial symplectic fibration $\mathbb{M} \times \mathbf{B} \longrightarrow \mathbf{B}$, which spells out as
\begin{equation}
	\label{eq:slims_star} 
	H_i \coloneqq \langle \varpi, \partial_{t_i} \rangle = \sum_{j \neq i \in I^{\infty}} \frac{\Tr(P_iQ_jP_jQ_i)}{t_i - t_j} \, .
\end{equation}

The universal simply-laced quantum connection of Definition~\ref{def:slqc} specialises to
\begin{equation}
	\widehat{\nabla} = d - \widehat{\varpi} = d - \sum_{i \in I^{\infty}} \left(\sum_{i \neq j \in I^{\infty}} \frac{\Tr(\widehat{Q}_j\widehat{P}_j\widehat{Q}_i\widehat{P}_i)}{t_i - t_j} \cdot \hslash^4\right)dt_i \, ,
\end{equation}
where $\widehat{Q}_j$ and $\widehat{P}_j$ are matrices whose coefficients are the Weyl quantisation of the coordinate functions $(Q_j)_{kl}$ and $(P_j)_{kl}$, and the order of these noncommutative variables is fixed by anchoring all degenerate 4-cycles at their centre---following Definition~\ref{def:quantisation_imd_potentials}. 

The expansion of the universal simply-laced quantum Hamiltonian at the node $i \in I^{\infty}$ is then
\begin{equation}
	\label{eq:slqc_star}
	\widehat{H}_i \coloneqq \langle \widehat{\varpi}, \partial_{t_i} \rangle = \sum_{i \neq j} \frac{\Tr(\widehat{Q}_j\widehat{P}_j\widehat{Q}_i\widehat{P}_i)}{t_i - t_j} \cdot \hslash^4 \, ,
\end{equation}
whereas the simply-laced quantum Hamiltonians are the functions $\rho_1(\widehat{H}_i) \colon \mathbf{B} \longrightarrow A$ as in Definition~\ref{def:level_1_slqc}. 

We now prove directly that the universal quantum Hamiltonians commute. 

\begin{prop}
	\label{prop:flatness_SLQC_star}
	One has $\bigl[\widehat{H}_i,\widehat{H}_j\bigr] = 0$ for all $i,j \in I^{\infty}$.
\end{prop}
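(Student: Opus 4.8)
The plan is to verify $\big[\widehat{H}_i,\widehat{H}_j\big]=0$ directly, expanding everything into the Darboux coordinates $(Q_i)_{kl},(P_i)_{mn}$ with the canonical brackets recorded above, lifted to the Weyl algebra via $\widehat{Q},\widehat{P}$. Since $\widehat{H}_i = \sum_{i\neq l}\frac{1}{t_i-t_l}\Tr(\widehat{Q}_l\widehat{P}_l\widehat{Q}_i\widehat{P}_i)$, the commutator $\big[\widehat{H}_i,\widehat{H}_j\big]$ is a double sum over $l\neq i$ and $m\neq j$ of $\frac{1}{(t_i-t_l)(t_j-t_m)}\big[\Tr(\widehat{Q}_l\widehat{P}_l\widehat{Q}_i\widehat{P}_i),\Tr(\widehat{Q}_m\widehat{P}_m\widehat{Q}_j\widehat{P}_j)\big]$. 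So the first step is to compute the commutator of two traces of degenerate $4$-cycles in the star, which by Prop.~\ref{prop:quantum_commutators} (the hypotheses of Lemma~\ref{lem:quantum_commutators} do not directly apply here, so one invokes the full Prop.~\ref{prop:quantum_commutators}) is again a trace of a quantum cycle, and a quantisation of the classical Poisson bracket $\{\Tr(Q_lP_lQ_iP_i),\Tr(Q_mP_mQ_jP_j)\}$. In the star there are only two centres available, $0$ and the leg nodes; two such $4$-cycles intersect nontrivially only along a pair of antiparallel arrows through the centre $0$ (the case $\{l,i\}\cap\{m,j\}\neq\varnothing$), so the bracket is a weighted sum of $6$-cycles through $0$, plus contributions where indices coincide producing shorter cycles.

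Next I would organise the nonzero terms by the index overlaps: the cases $i=m$, $j=l$, $i=j$ (excluded since $i\neq j$), $l=m$, and the ``generic'' cases with a single shared leg. For each, one writes out the resulting quantum $6$-cycle explicitly and, crucially, tracks the anchor. By Prop.~\ref{prop:change_anchor} and the discussion of the ``troublesome'' intersections in \S~\ref{sec:last_verifications}, one can always base the resulting cycle at a chosen arrow out of the centre $0$; then two quantum cycles coming from different index-overlap patterns that yield the same classical $6$-cycle will have equivalent anchors, so they combine with the same scalar coefficient. This reduces the vanishing of $\big[\widehat{H}_i,\widehat{H}_j\big]$ to the vanishing of the scalar coefficient in front of each distinct classical $6$-cycle; but those coefficients are exactly the ones appearing in the classical computation $\{H_i,H_j\}$, which vanish by Theorem~\ref{thm:classical_flatness} (equivalently, by the special case of the Schlesinger flatness, or by the explicit Proposition stated for the star--star intersection in \S~\ref{sec:last_verifications}).

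Concretely, the shared-leg terms cancel in pairs: the contribution from $\big[\Tr(\widehat{Q}_l\widehat{P}_l\widehat{Q}_i\widehat{P}_i),\Tr(\widehat{Q}_m\widehat{P}_m\widehat{Q}_j\widehat{P}_j)\big]$ with $l=j$ is cancelled by the one with the roles swapped once the $\frac{1}{(t_i-t_l)(t_j-t_m)}$ weights are taken into account — this is where the identity $\frac{1}{t_i-t_j}=-\frac{1}{t_j-t_i}$ enters, together with the cyclic invariance that makes the two $6$-cycles equal as anchored cycles. The diagonal terms ($i=m$ together with $l=j$, say) produce commutators of a $4$-cycle with itself-type expressions whose classical limit is $\{H_i,H_i\}=0$, and since the quantum correction in Prop.~\ref{prop:quantum_commutators} is a quantisation of this, one checks it lands in the span of quantum cycles with zero coefficient.

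The main obstacle I expect is bookkeeping the anchors in the shared-leg cancellation: a priori the two $6$-cycles to be cancelled carry anchors at different arrows out of $0$, and one must exhibit the admissible permutation identifying them before the classical coefficients can be invoked. This is precisely the content that \S~\ref{sec:last_verifications} handles in general (the star--star case is n\textdegree\ref{four}, already dealt with, and the relevant $6$-cycle-through-the-centre identity), so here it specialises to a short, explicit Darboux-coordinate check: expand both commutators using $[\widehat{Q}_{kl},\widehat{P}_{mn}]=\delta_{kn}\delta_{lm}$, collect the degree-$6$ and degree-$4$ parts, and confirm the degree-$6$ part is the symmetrisation that matches the classical bracket while the degree-$4$ part has vanishing coefficient by the same index symmetry. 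Hence $\big[\widehat{H}_i,\widehat{H}_j\big]=0$.
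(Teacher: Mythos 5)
Your overall strategy matches the paper's: expand $\big[\widehat{H}_i,\widehat{H}_j\big]$ into a double sum of commutators of traces of degenerate $4$-cycles, invoke Prop.~\ref{prop:quantum_commutators} (you correctly note that Lemma~\ref{lem:quantum_commutators} alone doesn't suffice since degenerate $4$-cycles contain antiparallel pairs), reduce the combinatorics to the classical flatness, and keep track of anchors. However, your description of the final cancellation step has a genuine gap. You assert that ``the shared-leg terms cancel in pairs'' via the antisymmetry $\frac{1}{t_i-t_j}=-\frac{1}{t_j-t_i}$, but that is not how the cancellation actually works. After regrouping the double sum by the common leg $k\in I^{\infty}\setminus\{i,j\}$, one gets for each fixed $k$ exactly \emph{three} nonzero terms — from $(l,m)=(k,k)$, $(l,m)=(j,k)$ and $(l,m)=(k,i)$ — and, once the commutator of two degenerate $4$-cycles with centre in common is evaluated (Prop.~\ref{prop:commutator_degenerate_4_cycles}), all three reduce to the \emph{same} operator $\Tr\big([\widehat{Q}_j\widehat{P}_j,\widehat{Q}_i\widehat{P}_i]\widehat{Q}_k\widehat{P}_k\big)$. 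The vanishing then rests entirely on the three-term partial-fraction (Kohno) identity
\begin{equation*}
\frac{1}{(t_i-t_k)(t_j-t_k)} - \frac{1}{(t_i-t_j)(t_j-t_k)} - \frac{1}{(t_i-t_k)(t_j-t_i)} = 0,
\end{equation*}
not on a pairwise mechanism. Without identifying this regrouping and this identity, the claimed cancellation is unsubstantiated; it is the heart of the proof.

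A smaller misattribution: you point to case n\textdegree~4 of the ``troublesome'' intersections in \S~\ref{sec:last_verifications} as the relevant one, but that case concerns two \emph{nondegenerate} $4$-cycles with a centre in common. In the star there are no nondegenerate $4$-cycles; the relevant result is the explicit commutator of two \emph{degenerate} $4$-cycles with common centre, stated and proved separately (Prop.~\ref{prop:commutator_degenerate_4_cycles} in the appendix), and this is precisely what supplies the common trace in the three-term sum. Also, your remark that the ``diagonal'' $(l,m)=(j,i)$ term has classical limit $\{H_i,H_i\}=0$ is not quite the reason it drops: $\Tr(\widehat{Q}_j\widehat{P}_j\widehat{Q}_i\widehat{P}_i)$ and $\Tr(\widehat{Q}_i\widehat{P}_i\widehat{Q}_j\widehat{P}_j)$ are the \emph{same} quantum cycle (the two $2$-subcycles commute, so the swap is an admissible permutation), hence the commutator vanishes exactly, not just to leading order.
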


\begin{proof}
	Assume $i \neq j \in I^{\infty}$. The trick is to split the commutator in the following sum:
	\begin{equation}
		\begin{split}
			\bigl[\widehat{H}_i,\widehat{H}_j\bigr] = \hslash^8 \cdot &\sum_{k \in I^{\infty} \setminus \{i,j\}} \frac{1}{(t_i - t_k)(t_j - t_k)}\Big[\Tr(\widehat{Q}_k\widehat{P}_k\widehat{Q}_i\widehat{P}_i),\Tr(\widehat{Q}_k\widehat{P}_k\widehat{Q}_j\widehat{P}_j)\Big] \\
			&+ \frac{1}{(t_i - t_j)(t_j - t_k)}\Big[\Tr(\widehat{Q}_j\widehat{P}_j\widehat{Q}_i\widehat{P}_i),\Tr(\widehat{Q}_j\widehat{P}_j\widehat{Q}_k\widehat{P}_k)\Big] \\
			&+ \frac{1}{(t_i - t_k)(t_j - t_i)}\Big[\Tr(\widehat{Q}_i\widehat{P}_i\widehat{Q}_k\widehat{P}_k),\Tr(\widehat{Q}_i\widehat{P}_i\widehat{Q}_j\widehat{P}_j)\Big] \, .
		\end{split}
	\end{equation}
	This decomposition is suggested by enumerating the possible degenerate 4-cycles with exactly one peripheral node in common, because this is the only situation leading to nonvanishing commutators. 
	Such nonvanishing commutators can be depicted as follows (for $k \in I^{\infty} \setminus \{i,j\}$):
	\begin{center}
		\begin{tikzpicture}
			\draw (-0.5,-1) -- (-1,-1) -- (-1,3.3) -- (-0.5,3.3) [-];
			\vertex (a) at (0,0) [label = left:$k$] {};
			\vertex (b) at (1,1) [fill = black] {};
			\vertex (c) at (1,2.4) [label = above:$i$] {};
			\vertex (d) at (2.5,0) [label = below:$k$] {};
			\vertex (e) at (3.5,1) [fill = black] {};
			\vertex (f) at (4.7,1) [label = right:$j$] {};
			\node (g) at (2,1) {\textbf{,}};
			\node (h) at (6,1) {\textbf{,}};
			\draw (5.2,-1) -- (5.7,-1) -- (5.7,3.3) -- (5.2,3.3) [-];
			\path
			(a) edge [bend left = 20] (b)
			(b) edge [bend left = 20] (a)
			(c) edge [bend left = 20] (b)
			(b) edge [bend left = 20] (c)
			(d) edge [bend left = 20] (e)
			(e) edge [bend left = 20] (d)
			(f) edge [bend left = 20] (e)
			(e) edge [bend left = 20] (f);
		\end{tikzpicture}
	\end{center}
	when $k$ is in common; then
	\begin{center}
		\begin{tikzpicture}
			\draw (-0.5,-2) -- (-1,-2) -- (-1,2.5) -- (-0.5,2.5) [-];
			\vertex (a) at (0,0) [fill = black] {};
			\vertex (b) at (0,1.4) [label = above:$i$] {};
			\vertex (c) at (1.3,0) [label = right:$j$] {};
			\vertex (d) at (3,-1) [label = below:$k$] {};
			\vertex (e) at (4,0) [fill = black] {};
			\vertex (f) at (5.3,0) [label = right:$j$] {};
			\node (g) at (2.3,0) {\textbf{,}};
			\node (h) at (6.6,0) {\textbf{,}};
			\draw (5.8,-2) -- (6.3,-2) -- (6.3,2.5) -- (5.8,2.5) [-];
			\path
			(b) edge [bend left = 20] (a)
			(a) edge [bend left = 20] (b)
			(c) edge [bend left = 20] (a)
			(a) edge [bend left = 20](c)
			(d) edge [bend left = 20] (e)
			(e) edge [bend left = 20] (d)
			(f) edge [bend left = 20] (e)
			(e) edge [bend left = 20] (f);
		\end{tikzpicture}
	\end{center}
	when $j$ is in common; finally
	\begin{center}
		\begin{tikzpicture}
			\draw (-0.5,-1) -- (-1,-1) -- (-1,3.3) -- (-0.5,3.3) [-];  
			\vertex (a) at (0,0) [label = left:$k$] {};
			\vertex (b) at (1,1) [fill = black] {};
			\vertex (c) at (1,2.4) [label = above:$i$] {};
			\vertex (d) at (3,2.4) [label = above:$i$] {};
			\vertex (e) at (3,1) [fill = black] {};
			\vertex (f) at (4.3,1) [label = right:$j$] {};
			\node (g) at (2,1) {\textbf{,}};
			\node (h) at (5.6,1) {\textbf{,}};
			\draw (4.8,-1) -- (5.3,-1) -- (5.3,3.3) -- (4.8,3.3) [-];
			\path
			(a) edge [bend left = 20] (b)
			(b) edge [bend left = 20] (a)
			(c) edge [bend left = 20] (b)
			(b) edge [bend left = 20] (c)
			(d) edge [bend left = 20] (e)
			(e) edge [bend left = 20] (d)
			(f) edge [bend left = 20] (e)
			(e) edge [bend left = 20] (f);
		\end{tikzpicture}
	\end{center}
	when $i$ is in common. 
	Now, using Lemma~\ref{lem:commutator_degenerate_4_cycles} one finds:
	\begin{equation}
		\begin{split}
			\bigl[&\widehat{H}_i,\widehat{H}_j\bigr] = \hslash^8 \cdot \sum_{k \neq i,j} \frac{1}{(t_i - t_k)(t_j - t_k)}\Tr\Big([\widehat{Q}_i\widehat{P}_i,\widehat{Q}_k\widehat{P}_k]\widehat{Q}_j\widehat{P}_j\Big) \\
			& \qquad \qquad + \frac{1}{(t_i - t_j)(t_j - t_k)}\Tr\Big([\widehat{Q}_i\widehat{P}_i,\widehat{Q}_j\widehat{P}_j]\widehat{Q}_k\widehat{P}_k\Big) \\
			& \qquad \qquad + \frac{1}{(t_i - t_k)(t_j - t_i)}\Tr\Big([\widehat{Q}_k\widehat{P}_k,\widehat{Q}_i\widehat{P}_i]\widehat{Q}_j\widehat{P}_j\Big) \\
			&= \hslash^8 \cdot \sum_{k \neq i,j} \left[\frac{1}{(t_i - t_k)(t_j - t_k)} - \frac{1}{(t_i - t_j)(t_j - t_k)} - \frac{1}{(t_i - t_k)(t_j - t_i)}\right] \\
			& \qquad \qquad \cdot \Tr\Big([\widehat{Q}_j\widehat{P}_j,\widehat{Q}_i\widehat{P}_i]\widehat{Q}_k\widehat{P}_k\Big) \, .
		\end{split}
	\end{equation}
	Each addend of this sum vanishes because of the cyclic relation
	\begin{equation}
		\frac{1}{(t_i - t_k)(t_j - t_k)} - \frac{1}{(t_i - t_j)(t_j - t_k)} - \frac{1}{(t_i - t_k)(t_j - t_i)} = 0.
	\end{equation}
\end{proof}

\subsection{KZ connection}
\label{sec:KZ}

Here we recall the definition of the KZ connection~\cite{knizhnik_zamolodchikov_1984_wess_zumino_witten}, and we derive an explicit formula for the KZ Hamiltonians.

Let $m \geq 1$ be an integer, and consider the space $\mathbf{B} = \mathbb{C}^m \setminus \{\diags\} \subseteq \mathbb{C}^m$ of configuration of ordered $m$-tuples of points in $\mathbb{C}$, with global complex coordinates $\{t_i\}_i$ defined by the restriction of the standard coordinates on $\mathbb{C}^m$. 
Choose a Lie algebra $\mathfrak{g}$ equipped with an invariant nondegenerate symmetric bilinear form $K \in \mathfrak{g}^* \otimes \mathfrak{g}^*$ identifying $\mathfrak{g} \simeq \mathfrak{g}^*$. 
Consider the trivial bundle $U(\mathfrak{g})^{\otimes m} \times \mathbf{B} \longrightarrow \mathbf{B}$ with fibre the $m$-fold tensor power of the universal enveloping algebra of $\mathfrak{g}$. 
Then KZ is a connection $\widehat{\nabla}_{\KZ}$ on this trivial bundle of noncommutative algebras. 

To write it down, use the duality $\mathfrak{g} \simeq \mathfrak{g}^*$ to turn the identity morphism $\Id_{\mathfrak{g}} \in \mathfrak{g} \otimes \mathfrak{g}^*$ into an element $\Omega \in \mathfrak{g} \otimes \mathfrak{g}$. 
Then applying the universal inclusion $\iota_U \colon \mathfrak{g} \hookrightarrow U(\mathfrak{g})$ on each factor yields an element of $U(\mathfrak{g}) \otimes U(\mathfrak{g})$, abusively denoted $\Omega$ as well.
Then let $\Omega^{(ij)} \in U(\mathfrak{g})^{\otimes m}$ be its natural embedding on the $i$-th and $j$-th slot of the $m$-fold tensor product, for $i \neq j \in \{1,\dotsc,m\}$.

The \emph{universal KZ connection} is defined by
\begin{equation}
	\label{eq:KZ}
	\widehat{\nabla}_{\KZ} = d - \widehat{\varpi}_{\KZ} \coloneqq d - \sum_{i \neq j} \Omega^{(ij)} \frac{dt_i - dt_j}{t_i - t_j} \, ,
\end{equation}
where $\Omega^{(ij)} \in U(\mathfrak{g})^{\otimes m}$ acts by left multiplication. 
The \emph{KZ Hamiltonians} are then given by
\begin{equation}
	\widehat{H}^{\KZ}_i \coloneqq \langle \widehat{\varpi}_{\KZ},\partial_{t_i} \rangle = \sum_{j \neq i} \frac{\Omega^{(ij)}}{t_i - t_j} \, ,
\end{equation}
and constitute a strongly integrable nonautonomous quantum Hamiltonian system. 

\begin{rem}
	\label{rem:formula_2_tensor_kz}
	The above definition of $\Omega$ is intrinsic. 
	One may equivalently choose an orthonormal basis $(x_i)_i$ of $\mathfrak{g}$ and find $\Omega = \sum_i x_i \otimes x_i$, $K = \sum_i dx_i \otimes dx_i$. 

	Furthermore, the connections $d - \hslash\widehat{\varpi}_{\KZ}$ are strongly flat as well for every choice of the parameter $\hslash$.
	Using the construction of Proposition~\ref{prop:rees} one can add such a quantum parameter in~\eqref{eq:KZ}, matching it with the expression usually found in the literature ($\hslash^{-1}$ will be the sum of the dual Coxeter number $h^{\vee}$ of a simple Lie algebra $\mathfrak{g}$, and of the level $\kappa \in \mathbb{Z}_{\geq 0}$ for the action of the central element on integrable highest-weight modules for the affine Lie algebra $\widehat{\mathfrak{g}}$; see e.g.~\cite{etingov_frenkel_kirillov_1998_lectures_on_representation_theory_and_kz_equations, kohno_2002_conformal_field_theory_and_topology}).
\end{rem}

This construction can be specialised to $\mathfrak{g} = \mathfrak{gl}(W^0)$, where $W^0$ is a finite-dimensional vector space, taking the nondegenerate invariant trace pairing $K(A,B) \coloneqq \Tr(AB)$. 

\begin{defn}
	The isomorphism $\mathfrak{g} \simeq \mathfrak{g}^*$ induced by the nondegenerate trace pairing is called the trace-duality.
\end{defn}

The trace-duality sends the vector $e_{ij}$ of the canonical basis of $\mathfrak{g}$ to the covector $de_{ji} \in \mathfrak{g}^*$. 
Hence in these coordinates one finds
\begin{equation}
	\label{eq:kz_hamiltonians}
	\widehat{H}^{\KZ}_i = \sum_{j \neq i} \sum_{k,l} \frac{\widehat{e}^{(i)}_{kl} \cdot \widehat{e}^{(j)}_{lk}}{t_i - t_j} \, ,
\end{equation}
for all $i \in \{1,\dotsc,m\}$, where the product on $U(\mathfrak{g})^{\otimes m}$ is defined factor-wise, $\widehat{e}_{jk} \coloneqq \iota_U(e_{jk})$ and $\widehat{e}^{(i)}_{jk} \in U(\mathfrak{g})^{\otimes m}$ denotes the natural embedding of $\widehat{e}_{jk} \in U(\mathfrak{g})$ on the $i$th slot of $U(\mathfrak{g})^{\otimes m}$.

\subsection{Schlesinger system}
\label{sec:schlesinger_system}

In this section we write explicit formul\ae{} for the Schlesinger Hamiltonians~\cite{schlesinger_1905_ueber_die_loesungen_gewisser_linearer_differentialgleichungen}.

Fix again a finite-dimensional vector space $W^0$, and consider the trivial holomorphic vector bundle $W^0 \times \mathbb{C} P^1 \longrightarrow \mathbb{C} P^1$.
Take $z$ to be a holomorphic coordinate identifying $\mathbb{C} P^1 \simeq \mathbb{C} \cup \{\infty\}$ and choose complex numbers $t_1, \dotsc t_m \in \mathbb{C}$ and endomorphisms $R_1, \dotsc R_m \in \mathfrak{g} = \mathfrak{gl}(W^0)$ such that $\sum_i R_i = 0$. 
Then the Fuchsian system with poles at the points $t_i$ and residues $R_i$ is the system of linear first order differential equations
\begin{equation}
	\frac{\partial{\psi}}{\partial z} = \sum_{i = 1}^m \frac{R_i}{z - t_i} \psi \, ,
\end{equation}
for a local holomorphic section $\psi$ of $W^0 \times \mathbb{C} P^1 \longrightarrow \mathbb{C} P^1$. 
The isomonodromy equations for the residues under variations of the positions of the poles yield the Schlesinger system:
\begin{equation}
	dR_i = -\sum_{i \neq j} \bigl[R_i,R_j\bigr] \frac{dt_i - dt_j}{t_i - t_j} \, .
\end{equation}
These are nonlinear first order differential equations defining an integrable Ehresmann connection on the trivial Poisson fibration $\mathfrak{g}^m \times \mathbf{B} \longrightarrow \mathbf{B}$, where $\mathbf{B} = \mathbb{C}^m \setminus \{\diags\}$ parametrises the choices of the positions of the poles, and where we equip $\mathfrak{g}$ with the linear Poisson bracket coming from the Lie--Poisson structure of $\mathfrak{g}^*$ under the trace-duality. 

It can now be shown that there exist smooth functions $H^{\Sch}_i \colon \mathfrak{g}^m \times \mathbf{B} \longrightarrow \mathbb{C}$ such that the isomonodromy equations become
\begin{equation}
	\frac{\partial R_j}{\partial t_i} = \{H^{\Sch}_i,R_j\} \, ,
\end{equation}
for $1 \leq i,j \leq m$. 
These functions are called the \emph{Schlesinger Hamiltonians}, and are explicitly given by 
\begin{equation}
	\label{eq:schlesinger_hamiltonians}
	H^{\Sch}_i = \sum_{i \neq j} \frac{\Tr(R_iR_j)}{t_i - t_j} \, .
\end{equation}

\subsection{KZ is a quantisation of Schlesinger}
\label{sec:quantum_Schlesinger=KZ} 

It is shown in~\cite{reshetikhin_1992_kz_deformation_isomonodromy, harnad_1996_quantum_imd_and_the_kz_equations} that the KZ connection is a quantisation of the Schlesinger system. 
In this section we give a proof adapted to our notation.

There is a standard quantisation machinery for the dual of a Lie algebra $\mathfrak{g}^*$. 
Namely, the algebra of regular functions on $\mathfrak{g}^*$ is isomorphic to $\Sym(\mathfrak{g})$, which is a graded commutative Poisson algebra endowed with its Lie--Poisson structure. 
The material of \S~\ref{sec:quantisation_algebras} applies, and one can look for a filtered quantisation of $\Sym(\mathfrak{g})$. 

An explicit filtered quantisation is described by the Poincaré--Birkhoff--Witt theorem, stating that there is an isomorphism of graded commutative algebras $\Sym(\mathfrak{g}) \simeq \gr U(\mathfrak{g})$, taking the standard filtration on the universal enveloping algebra of $\mathfrak{g}$. 
Moreover, the identity~\eqref{eq:filtered_quantisation} for Poisson brackets holds, and thus $U(\mathfrak{g})$ is a filtered quantisation of $\Sym(\mathfrak{g})$, which constitutes a Poisson analogue of the symplectic construction of the Weyl algebra of \S~\ref{sec:weyl}. 

We now extend the universal inclusion $x \longmapsto \widehat{x}$ to a map defined on the whole of the symmetric algebra, by full symmetrisation (in characteristic zero).

\begin{defn}
	\label{def:pbw_quantisation}
	The PBW-quantisation $\mathcal{Q}_{\PBW} \colon \Sym(\mathfrak{g}) \longrightarrow U(\mathfrak{g})$ is the map defined on monomials of degree $n$ by
	\begin{equation}
		\mathcal{Q}_{\PBW} \colon \prod_{i = 1}^n x_i \longmapsto \frac{1}{n!} \sum_{\tau \in \Sigma_n} \prod_{i = 1}^n \widehat{x}_{\tau(i)} \, ,
	\end{equation}
	where $x_i \in \mathfrak{g}$ for all $i$ and $\Sigma_n$ is the symmetric group on $n$ objects.
\end{defn}

To apply this to the KZ connection take $\mathfrak{g} = \mathfrak{gl}(W^0)$ as in \S~\ref{sec:KZ}, and extend the PBW-quantisation to a map $\Sym(\mathfrak{g})^{\otimes m} \longrightarrow U(\mathfrak{g})^{\otimes m}$ in the natural way. 
Now, the Schlesinger Hamiltonians~\eqref{eq:schlesinger_hamiltonians} are global sections
\begin{equation}
	H^{\Sch}_i \colon \mathbf{B} \longrightarrow \Sym(\mathfrak{g}^*)^{\otimes m} \, ,
\end{equation}
and thanks to the trace-duality $\mathfrak{g}^* \simeq \mathfrak{g}$ one may consider them as taking values in $\Sym(\mathfrak{g})^{\otimes m}$. 
Similarly, the KZ Hamiltonians~\eqref{eq:kz_hamiltonians} are global sections 
\begin{equation}
	\widehat{H}^{\KZ}_i \colon \mathbf{B} \longrightarrow U(\mathfrak{g})^{\otimes m} \, ,
\end{equation}
and thus it makes sense to compare the KZ Hamiltonians with the PBW-quantisation of the Schlesinger Hamiltonians.

\begin{thm}
	One has $\mathcal{Q}_{\PBW}(H^{\Sch}_i) = \widehat{H}_i^{\KZ}$ for all $i = 1, \dotsc, m$, pointwise on $\mathbf{B}$.
\end{thm}

\begin{proof}
	By linearity it is enough to show that 
	\begin{equation}
		\mathcal{Q}_{\PBW}\big(\Tr(R_iR_j)\big) = \Omega^{(ij)}, \qquad \text{for all } i \neq j \in \{1, \dotsc, m\} \, .
	\end{equation}
	Note that the function $(R_1,\dotsc,R_m) \longmapsto \Tr(R_iR_j)$ is the embedding of the invariant nondegenerate bilinear form $K \in \mathfrak{g}^* \otimes \mathfrak{g}^*$ on the on the $i$th and $j$th slot of $\Sym(\mathfrak{g}^*)^{\otimes m}$, and that the PBW-quantisation reduces to the universal inclusion $\iota_U$ on elements of degree one. Hence it is enough to show that the trace-dual $K^*$ of $K$ equals $\Omega$ inside $\mathfrak{g} \otimes \mathfrak{g}$, which is clear by computing in a $K$-orthonormal basis (cf. Remark~\ref{rem:formula_2_tensor_kz}).
\end{proof}

\subsection{Classical Hamiltonian reduction}
\label{sec:reduced_star=schlesinger}

In this section we show that the classical Hamiltonian reduction of the simply-laced Hamiltonians system~\eqref{eq:slims_star} yields the Schlesinger Hamiltonians~\eqref{eq:schlesinger_hamiltonians}. 
The formal procedure is to replace the matrix product $Q_iP_i$ that appears in~\eqref{eq:slims_star} with the residue $R_i$ that appears in~\eqref{eq:schlesinger_hamiltonians}, and we will turn this into an algebraic statement.

Consider again the vector spaces $W^0$ and $W^{\infty} = \bigoplus_{i \in I^{\infty}} V_i$, and keep the notation $\mathfrak{g} = \mathfrak{gl}(W^0)$. 
Using the nondegenerate pairing provided by the trace, one has for all $i \in I^{\infty}$ a canonical identification
\begin{equation}
	L_i \coloneqq \Hom(V_i,W^0) \simeq \Hom(W^0,V_i)^* \, ,
\end{equation}
and thus
\begin{equation}
	\Hom(V_i,W^0) \oplus \Hom(W^0,V_i) \simeq T^*L_i \, .
\end{equation}
Consider now the product map, that is 
\begin{equation}
	\mu_i \colon T^*L_i \longrightarrow \mathfrak{g}, \qquad (Q_i,P_i) \longmapsto Q_iP_i \, .
 \end{equation}
Up to using the trace-duality this is the moment map for the restriction of the $\GL(W^0)$-action on the invariant symplectic subspace $T^*L_i \subseteq \mathbb{M}$; in particular it is a Poisson map. 
The direct sum of the maps $\mu_i$ thus yields a Poisson map
\begin{equation}
	\mu \colon \mathbb{M} = \bigoplus_{i \in I^{\infty}} T^*L_i \longrightarrow \mathfrak{g}^m, \qquad \mu \colon (Q_i,P_i)_{i \in I^{\infty}} \longmapsto (Q_iP_i)_{i \in I^{\infty}} \, ,
\end{equation}
and the pull-back with respect to $\mu$ yields a Poisson morphism 
\begin{equation}
	\mu^* \colon \Sym(\mathfrak{g}^*)^{\otimes m} \longrightarrow \Sym(\mathbb{M}^*) = A_0 \, .
\end{equation}
To write an explicit formula for it, let $(R_i)_{jk}$ be the coordinate functions on the $i$th factor of the product $\mathfrak{g}^m$, and $(Q_i)_{jm}$, $(dP_i)_{ml}$ the coordinate functions on $\mathbb{M}$---once bases of $W^0$ and $V_i$ have been chosen.
Then for all $i \in I^{\infty}$ one has 
\begin{equation}
	\label{eq:global_classical_moment}
	\mu^*\big((R_i)_{jk}\big) = \sum_m (Q_i)_{jm}(P_i)_{mk} \, .
\end{equation}
We see that $\mu^*$ is a morphism turning polynomial functions on $\mathfrak{g}^m$ into polynomial functions on $\mathbb{M}$, and we now prove that this natural correspondence matches~\eqref{eq:slims_star} with~\eqref{eq:schlesinger_hamiltonians}.

\begin{prop}
	\label{prop:pull_back_classical}
	One has $\mu^*(H^{\Sch}_i) = H_i$ for all $i \in I^{\infty}$.
\end{prop}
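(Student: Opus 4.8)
The plan is to prove the identity by a direct computation on the explicit formulas \eqref{eq:slims_star} and \eqref{eq:Schlesinger}, using only cyclicity of the trace. First I would record that, because $\mu$ respects the decomposition of $\mathbb{M}$ into legs, the pullback $\mu^{*}$ sends the $\liealg^{0}$-valued coordinate function $R_{j}$ on $(\liealg^{0})^{m}$ to the regular function $(Q,P)\mapsto Q_{j}P_{j}\in\End(W^{0})$ on $\mathbb{M}$; being an algebra map, it then sends $\Tr(R_{i}R_{j})$ to $\Tr_{W^{0}}\big(Q_{i}P_{i}Q_{j}P_{j}\big)$. Next I would observe that the two systems carry \emph{literally} the same base $\comp^{m}\setminus\{\diags\}$, with the same coordinates $t_{i}$ for $i\in I^{\infty}$, so that $\mu^{*}$ fixes each scalar $(t_{i}-t_{j})^{-1}$; hence it suffices to match the two Hamiltonians summand by summand over $j\in I^{\infty}\setminus\{i\}$.

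The computation is then one line. For fixed $j\neq i$ one has $Q_{i}\colon V_{i}\to W^{0}$ and $P_{i}Q_{j}P_{j}\colon W^{0}\to V_{i}$, so the elementary identity $\Tr_{W^{0}}(AB)=\Tr_{V_{i}}(BA)$, applied with $A=Q_{i}$ and $B=P_{i}Q_{j}P_{j}$, gives
\[
 \mu^{*}\!\left(\frac{\Tr(R_{i}R_{j})}{t_{i}-t_{j}}\right)
 =\frac{\Tr_{W^{0}}\big(Q_{i}P_{i}Q_{j}P_{j}\big)}{t_{i}-t_{j}}
 =\frac{\Tr_{V_{i}}\big(P_{i}Q_{j}P_{j}Q_{i}\big)}{t_{i}-t_{j}},
\]
and the right-hand side is exactly the $j$-th term of $H_{i}$ in \eqref{eq:slims_star}. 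Summing over $j\in I^{\infty}\setminus\{i\}$ yields $\mu^{*}(H^{\Sch}_{i})=H_{i}$.

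There is no real obstacle here; the only point requiring care is that the two traces live on different spaces ($W^{0}$ on the Schlesinger side, $V_{i}$ on the star side), so the matching genuinely uses the cyclic-trace identity rather than a naive substitution — the same mechanism that makes $Q_{i}P_{i}$ transform under $\GL(W^{0})$ exactly as $R_{i}$ does, consistently with $\mu$ being a restriction of the moment map. I would close by remarking that, since $\mu$ was shown above to be a Poisson map, this equality of Hamiltonian functions at once upgrades to the statement that the Schlesinger flow is the $\mu$-projection of the star SLIMS flow; together with the previous subsection (KZ quantises Schlesinger) this is the classical shadow of the reduction from the SLQC to the KZ connection, to be carried out in the sections that follow.
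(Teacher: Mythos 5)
Your proof is correct, and reaches the same identity the paper verifies, but by a somewhat cleaner route. The paper fixes bases $\{e_j\}$ of $W^0$ and $\{f^{(i)}_j\}$ of each $V_i$, computes the component formula $\mu_i^* de^{(i)}_{jk} = \sum_m d(ef^{(i)})_{jm}\otimes d(f^{(i)}e)_{mk}$, and then verifies $\mu^*\Tr(R_iR_j)=\Tr(Q_iP_iQ_jP_j)$ by a quadruple index sum. You avoid bases entirely: you use that $\mu$ is built legwise so that the matrix-valued coordinate $R_j$ pulls back to $Q_jP_j$, that $\mu^*$ is an algebra morphism, and then apply cyclicity of the trace to pass from $\Tr_{W^0}(Q_iP_iQ_jP_j)$ to the $\Tr_{V_i}(P_iQ_jP_jQ_i)$ that actually appears in \eqref{eq:slims_star}. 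That last step is a genuine small addition: the paper writes the two traces without subscripts and never flags that they live on different spaces, whereas you make the $W^0$-versus-$V_i$ bookkeeping explicit. The trade-off is that the paper's coordinate computation doubles as a warm-up for the quantum moment map $\widehat{\mu^*}_0$ in \S~\ref{sec:reduced_quantumstar=KZ1}, where the same index gymnastics are reused with hats; your argument would not transfer verbatim there, since in the Weyl algebra the cyclic-trace identity fails at the level of the underlying words and has to be replaced by the anchoring discipline of \S~\ref{sec:quantisation_potentials}.
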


\begin{proof}
	By linearity, it is enough to check that
	\begin{equation}
		\mu^*\Tr(R_iR_j) = \Tr(Q_iP_iQ_jP_j), \qquad \text{for } i \neq j \in I^{\infty} \, .
	\end{equation}
	The definition of $\mu^*_i$ implies precisely
	\begin{equation}
		\begin{split}
			\mu^*&\Tr(R_iR_j) = \mu^*\sum_{k,l} (R_i)_{kl}(R_j)_{lk} = \sum_{k,l} \big(\mu_i^* (R_i)_{kl}\big) \big(\mu_j^* (R_j)_{lk}\big) \\
			&= \sum_{k,l,m,n} (Q_i)_{km}(P_i)_{ml}(Q_j)_{ln}(P_j)_{nk} = \Tr(Q_iP_iQ_jP_j) \, .
		\end{split}
	\end{equation}
\end{proof}

This gives an algebraic meaning to performing the formal change of variable $R_i \coloneqq Q_iP_i$. This can be used to show that~\eqref{eq:schlesinger_hamiltonians} is the Hamiltonian reduction of~\eqref{eq:slims_star} under the action of change of bases in the spaces at the peripheral nodes of the star $\mathcal{G}$. 
With this end in mind we now recall the definition of the Hamiltonian reduction of a Poisson algebra  (see~\cite{etingov_2007_calogero_moser_systems}; what is there called ``moment map'' we call ``comoment map'', following the usual convention that a moment map takes values in the dual of a Lie algebra). 

Start abstractly: let $(B,\{\cdot,\cdot\})$ be a commutative Poisson algebra and $\mathfrak{h}$ a Lie algebra. 

\begin{defn}
	An $\mathfrak{h}$-action on $B$ is a morphism $\xi \colon \mathfrak{h} \longrightarrow \Der(B)$ of Lie algebras. A comoment map for $\xi$ is a morphism $\mu^* \colon \Sym(\mathfrak{h}) \longrightarrow B$ of Poisson algebras whose restriction to $\mathfrak{h}$ lifts $\xi$ through the adjoint action of $B$ on itself:
	\begin{center}
		\begin{tikzpicture}[> = to]
			\node (a) at (0,0) {$\mathfrak{h}$};
			\node (b) at (3,0) {$\Der(B)$};
			\node (c) [above of = b] {$B$};
			\path
			(a) edge node [below]{$\xi$} (b)
			(a) edge [densely dashed] node [above left]{$\left.\mu^*\right|_{\mathfrak{h}}$} (c)
			(c) edge node [right]{$\ad$} (b);
		\end{tikzpicture}
	\end{center}
\end{defn}

The action is then uniquely determined by $\xi(x).a = \{\mu(x),a\}$, for $x \in \mathfrak{h}$ and $a \in B$.

Assume now to have a comoment $\mu^* \colon \Sym(\mathfrak{h}) \longrightarrow B$ for an $\mathfrak{h}$-action on $B$, and let $\mathfrak{I} \subseteq \Sym(\mathfrak{h})$ be an ideal.

\begin{defn}
	\label{def:hamiltonian_reduction}
	The Hamiltonian reduction of $B$ with respect to the comoment map $\mu^*$ and the ideal $\mathfrak{I}$ is the quotient ring 
	\begin{equation}
		R(B,\mathfrak{h},\mathfrak{I}) \coloneqq B^{\mathfrak{h}} \big\slash \mathfrak{J}^{\mathfrak{h}} \, ,
	\end{equation}
	where 
	\begin{equation}
		B^{\mathfrak{h}} \coloneqq \Set{f \in B | \{\mu^*(\mathfrak{h}),f\} = 0} \, ,
	\end{equation}
	is the $\mathfrak{h}$-invariant part, $\mathfrak{J} \subseteq B$ is the ideal generated by $\mu^*(\mathfrak{I})$ inside $B$, and $\mathfrak{J}^{\mathfrak{h}} \coloneqq \mathfrak{J} \cap B^{\mathfrak{h}}$.
\end{defn}

To apply this to the case at hand set $G^{\infty} \coloneqq \prod_{i \in I^{\infty}} \GL(V_i)$, with Lie algebra $\mathfrak{g}^{\infty}$. 
The group acts on $(\mathbb{M},\omega_a)$, and the infinitesimal action of $\mathfrak{g}^{\infty}$ by vector fields on $\mathbb{M}$ yields by definition a $\mathfrak{g}^{\infty}$-action on $A_0$, where $A_0 = \Sym(\mathbb{M}^*)$ plays the role of the Poisson algebra $B$ in the above definitions. 
There is a comoment map $\mu^*_{\infty} \colon \Sym(\mathfrak{g}^{\infty}) \longrightarrow A_0$ for the $\mathfrak{g}^{\infty}$-action, and we may choose an ideal $\mathfrak{I} \subseteq \Sym(\mathfrak{g}^{\infty})$. 
Then the Hamiltonian reduction $R(A_0,\mathfrak{g}^{\infty},\mathfrak{I})$ is defined, together with the canonical projection $\pi_{\mathfrak{I}} \colon A_0^{\mathfrak{g}^{\infty}} \longrightarrow R(A_0,\mathfrak{g}^{\infty},\mathfrak{I})$. 
By Corollary~\ref{cor:invariance} the simply-laced Hamiltonian $H_i$ of~\eqref{eq:slims_star} is $\mathfrak{g}^{\infty}$-invariant.

\begin{defn}
	The element $\pi_{\mathfrak{I}}(H_i) \in R(A_0,\mathfrak{g}^{\infty},\mathfrak{I})$ is the Hamiltonian reduction of the function $H_i$ at the ideal $\mathfrak{I}$.
\end{defn}
 
The geometric counterpart of this algebraic constriction is the following. 
Fix a Zariski-closed coadjoint $G^{\infty}$-orbit $\mathcal{O} \subseteq (\mathfrak{g}^{\infty})^*$, and let $\mathfrak{I}$ be the associated ideal of vanishing functions
\begin{equation}
	\mathfrak{I} \coloneqq I[\mathcal{O}] = \Set{ f \in \Sym(\mathfrak{g}^{\infty}) | \left.f\right|_{\mathcal{O}} = 0} \, .
\end{equation}
If $\mathfrak{J}$ is the ideal generated by $\mu^*_{\infty}(\mathfrak{I}) \subseteq A_0$, then the quotient ring $A_0 \big\slash \mathfrak{J}$ corresponds to the algebra of regular functions on the preimage $\mu_{\infty}^{-1}(\mathcal{O}) \subseteq \mathbb{M}$.
Finally, the $G^{\infty}$-invariant part yields the ring of functions on the quotient
\begin{equation}
	\mu_{\infty}^{-1}(\mathcal{O}) \big\slash G^{\infty} \eqqcolon \mathbb{M} \sslash_{\mathcal{O}} G^{\infty} \, ,
\end{equation}
which is the usual Marsden-Weinstein symplectic reduction (a symplectic leaf of the Poisson scheme $\mathbb{M} \slash G^{\infty}$). 

Analogously, one can take a product of coadjoint $\GL(W^0)$-orbits $\mathcal{O}' \coloneqq \mathcal{O}_1 \times \dotsm \times \mathcal{O}_m \subseteq (\mathfrak{g}^*)^m$. 
This is a symplectic manifold endowed with the sum of the Kirillov--Konstant--Souriau symplectic forms, and the Schlesinger system restricts to time-dependent Hamiltonians $\left.H^{\Sch}_i\right|_{\mathcal{O}'}$ defined on the trivial symplectic fibration $\mathcal{O}' \times \mathbf{B} \longrightarrow \mathbf{B}$. 
Hence at any fixed time these Hamiltonians are elements of the quotient ring $\Sym(\mathfrak{g}^*)^{\otimes m} \big\slash \mathfrak{I}'$, where $\mathfrak{I}' = I\bigl[\mathcal{O}'\bigr]$ is the ideal of functions vanishing on $\mathcal{O}'$.

We thus take a last restriction on the simply-laced isomonodromy system of the star to reduce it to the Schlesinger system: assume that the vector spaces $V_i$ are all equal to $W^0$, i.e. that we attach one and the same vector space to all nodes of $\mathcal{G}$. 
Then there are canonical identifications $G^{\infty} \simeq \GL(W^0)^m$ and $\mathfrak{g}^{\infty} \simeq \mathfrak{g}^m$, under which the product $\mathcal{O}' \subseteq \mathfrak{g}^m$ of $\GL(W^0)$-orbits becomes a $G^{\infty}$-orbit $\mathcal{O} \subseteq (\mathfrak{g}^{\infty})^*$---using the trace-duality.
Moreover, there is an induced isomorphism $\Sym(\mathfrak{g}^*)^{\otimes m} \simeq \Sym(\mathfrak{g}^{\infty})$ which sends the $\mathcal{O}$-vanishing ideal $\mathfrak{I} \subseteq \Sym(\mathfrak{g}^{\infty})$ onto the $\mathcal{O}'$-vanishing ideal $\mathfrak{I}' \subseteq \Sym(\mathfrak{g}^*)^{\otimes m}$, providing a ring isomorphism 
\begin{equation}
	\Sym(\mathfrak{g}^{\infty}) \big\slash \mathfrak{I} \simeq \Sym(\mathfrak{g}^*)^{\otimes m} \big\slash \mathfrak{I}' \, .
\end{equation}

The following proposition then gives a uniform way of comparing functions defined on $\mathcal{O}'$ with functions defined on the symplectic reduction $\mathbb{M} \sslash_{\mathcal{O}} G^{\infty}$.

\begin{prop}
	There is a natural injective ring morphism
	\begin{equation}
		\varphi \colon \Sym(\mathfrak{g}^*)^{\otimes m} \big\slash \mathfrak{I}' \longrightarrow R(A_0,\mathfrak{g}^{\infty},\mathfrak{I}) \, ,
	\end{equation}
	induced by composing the map $\mu^*$ of~\eqref{eq:global_classical_moment} with the canonical projection $\pi_{\mathfrak{I}}$ on the Hamiltonian reduction.
\end{prop}

\begin{proof}
	The only thing to show is that the image of $\mu^*$  is contained in the invariant part $A_0^{\mathfrak{g}^{\infty}}$. Indeed if that is true then the composition 
	\begin{equation}
		\pi_{\mathfrak{I}} \circ \mu^* \colon \Sym(\mathfrak{g}^*)^{\otimes m} \longrightarrow R(A_0,\mathfrak{g}^{\infty},\mathfrak{I})
	\end{equation}
	is well defined, and by construction the preimage of $\mathfrak{J}$ under $\mu^*$ is the ideal $\mathfrak{I}'$, up to the aforementioned identification $\Sym(\mathfrak{g}^*)^{\otimes m} \simeq \Sym(\mathfrak{g}^{\infty})$. 
	Hence the morphism $\varphi$ induced to the quotient has no kernel.

	Finally, the fact that $\mu^*$ takes values in the invariant algebra is due to a general fact about comoment maps for commutative Hamiltonian actions of Lie groups: their images are Poisson-commutative subalgebras. 
	This means that if $\mu_0 \colon \mathbb{M} \longrightarrow \mathfrak{g} \simeq \mathfrak{g}^*$ is the moment for the $\GL(W^0)$-action on $\mathbb{M}$, then one has $\mu^*_0\big(\Sym(\mathfrak{g}^*)\big) \subseteq A_0^{\mathfrak{g}^{\infty}}$. 
	The same holds for $\mu_i$, which is the restriction of $\mu_0$ to $T^*\Hom(V_i,W^0)$. 
	Since $\mu^*$ is defined as $\mu^*_i$ on each factor $T^*\Hom(V_i,W^0) \subseteq \mathbb{M}$, the proof is complete.
\end{proof}

Proposition~\ref{prop:pull_back_classical} immediately yields the final result of this section.

\begin{thm}
	One has $\varphi\left(\left.H_i^{\Sch}\right|_{\mathcal{O}'}\right) = \pi_{\mathfrak{I}}(H_i)$ for all $i \in I^{\infty}$.
\end{thm}

Hence indeed the Schlesinger system corresponds to the Hamiltonian reduction of this particular case of the simply-laced isomonodromy system, for every choice of ideal/coadjoint orbit. 

\begin{rem}
	The residual $\GL(W^0)$-action on $\mathbb{M} \sslash_{\mathcal{O}} G^{\infty}$---the action at the central node of $\mathcal{G}$---can further be modded out to yield the moduli space $\mathcal{M}^*_{\dR}$ of isomorphism classes of meromorphic connections~\eqref{eq:kz_meromorphic_connection} on a trivial vector bundle over $\mathbb{C}P^1$ (cf. Remark~\ref{rem:quantisation_local_systems}). 
	Then the totally reduced simply-laced Hamiltonians correspond to the reduction of the Schlesinger Hamiltonians on the symplectic quotient $\mathcal{O}' \sslash_0 \GL(W^0)$ at the zero level of the moment map for the diagonal coadjoint $\GL(W^0)$-action on $\mathcal{O}'$. 
	In particular we recover the realisation of the moduli space of logarithmic connection on the Riemann sphere as the complex symplectic quotient of a product of coadjoint orbits (see~\cite{hitchin_1997_frobenius_manifolds}).
\end{rem}

\subsection{Quantum Hamiltonian reduction}
\label{sec:reduced_quantumstar=KZ}

In this section we show the natural quantum analogues of the results of the previous one: the quantum Hamiltonian reduction of the simply-laced quantum Hamiltonians $\rho_1(\widehat{H}_i)$ yields the KZ Hamiltonians~\eqref{eq:kz_hamiltonians}. 

As before, we start by explaining the algebraic meaning of the ``quantum'' change of variables $\widehat{R}_i = \widehat{Q}_i\widehat{P}_i$. 
Namely, we understand it as a morphism  
\begin{equation}
	\widehat{\mu}^* \colon U(\mathfrak{g}^*)^{\otimes m} \longrightarrow A
\end{equation}
of associative algebras, where $A = W(\mathbb{M}^*,\{\cdot,\cdot\})$ is the Weyl algebra. 

\begin{prop}
	\label{prop:quantum_comoment}
	The auxiliary morphism $\widetilde{\mu}^*_i \colon \Tens(\mathfrak{g}^*) \longrightarrow \Tens(T^*L_i)$, defined on $\mathfrak{g}^*$ by 
	\begin{equation}
		\widetilde{\mu}^*_i \colon (R_i)_{jk} \longmapsto \sum_m (Q_i)_{jm} \otimes (P_i)_{mk} \, ,
	\end{equation}
	induces an associative morphism $\widehat{\mu}^*_i \colon U(\mathfrak{g}^*) \longrightarrow W(T^*L_i,\{\cdot,\cdot\})$.
\end{prop}

\begin{proof}
	It is simpler to prove this fact for the function $\alpha \colon \Tens(\mathfrak{g}) \longrightarrow \Tens(T^*L_i)$ obtained by precomposing $\widetilde{\mu}^*_i$ with the trace-duality $\mathfrak{g} \longrightarrow \mathfrak{g}^*$. 
	This composition reads 
	\begin{equation}
		\alpha \colon (e_i)_{jk} \longmapsto \sum_m (Q_i)_{km} \otimes (P_i)_{mj} \, .
	\end{equation}

	Now the basis elements satisfy the commutation relations
	\begin{equation}
		\bigl[(e_i)_{jk},(e_i)_{lm}\bigr] = \delta_{kl}(e_i)_{jm} - \delta_{jm}(e_i)_{lk}, \qquad \big\{(Q_i)_{jk},(P_i)_{lm}\big\} = \delta_{kl}\delta_{jm} \, ,
	\end{equation} 
	which directly imply
	\begin{equation}
		\alpha\Big(\bigl[(e_i)_{jk},(e_i)_{lm}\bigr]\Big) = \Big\{\alpha\big((e_i)_{jk}\big),\alpha\big((e_i)_{lm}\big)\Big\} \, .
	\end{equation}
	This yields 
	\begin{equation}
		\widetilde{\mu}^*_i\Big(\bigl[(R_i)_{jk},(R_i)_{lm}\bigr]\Big) = \Big\{\widetilde{\mu}^*_i\big((R_i)_{jk}\big),\widetilde{\mu}^*_i\big((R_i)_{lm}\big)\Big\} \, ,
	\end{equation}
	and thus the two-sided ideal generated by 
	\begin{equation}
		x \otimes y - y \otimes x - [x,y] \in \Tens(\mathfrak{g}^*), \qquad \text{for } x,y \in \mathfrak{g}^* \, ,
	\end{equation}
	lands in the two-sided ideal generated by 
	\begin{equation}
		f \otimes g - g \otimes f - \{f,g\} \in \Tens(T^*L_i), \qquad \text{for } f,g, \in T^*L_i \, .
	\end{equation}
	Then the universal property of the quotient concludes the proof.
\end{proof}

Proposition~\ref{prop:quantum_comoment} shows that the formula
\begin{equation}
	\label{eq:global_quantum_moment}
	\widehat{\mu}^*_i \big((\widehat{R}_i)_{jk}\big) \coloneqq \sum_m (\widehat{Q}_i)_{jm} \cdot (\widehat{P}_i)_{mk} 
\end{equation}
defines a morphism $\widehat{\mu}_i \colon U(\mathfrak{g}^*) \longrightarrow W(T^*L_i,\{\cdot,\cdot\})$, where $(\widehat{Q}_i)_{jm}$, $(\widehat{P}_i)_{mk}$ are the Weyl quantisations of the corresponding coordinate functions. 
Now we collect~\eqref{eq:global_quantum_moment} into a morphism $\widehat{\mu}^* \colon U(\mathfrak{g}^*)^{\otimes m} \longrightarrow A$, defined in the natural way:
\begin{equation}
	\widehat{\mu}^*\left(\bigotimes_{i = 1}^m \widehat{f}_i\right) \coloneqq \prod_{i = 1}^m \widehat{\mu}^*_i(\widehat{f}_i), \qquad \text{where} \widehat{f}_i \in U(\mathfrak{g}^*) \text{ for all } i \, ,
\end{equation}
and where we use the associative product of $A$ on the right-hand side. 
This is indeed a morphism of associative algebras if one endows $U(\mathfrak{g}^*)^{\otimes m}$ with the factor-wise product, since inside $A$ one has
\begin{equation}
	\bigl[W(T^*L_i,\{\cdot,\cdot\}),W(T^*L_j,\{\cdot,\cdot\})\bigr] = (0), \qquad \text{for } i \neq j \, .
\end{equation} 

We now prove that this morphism matches up the KZ Hamiltonians~\eqref{eq:kz_hamiltonians} with the simply-laced quantum Hamiltonians.

\begin{prop}
	\label{prop:pull_back_quantum}
	One has $\widehat{\mu}^*\big(\widehat{H}^{\KZ}_i\big) = \rho_1(\widehat{H}_i)$ for all $i \in I^{\infty}$.
\end{prop}

\begin{proof}
	By linearity, it will be enough to show that
	\begin{equation}
		\widehat{\mu}^* \big(\Tr(\widehat{R}_i\widehat{R}_j)\big) = \Tr(\widehat{Q}_i\widehat{P}_i\widehat{Q}_j\widehat{P}_j), \qquad \text{for } i \neq j \in I^{\infty} \, .
	\end{equation}
	This follows from the straightforward expansion
	\begin{equation}
		\begin{split}
			&\widehat{\mu}^*\Tr(\widehat{R}_i\widehat{R}_j) = \widehat{\mu}^*\sum_{k,l} (\widehat{R}_i)_{kl} \cdot (\widehat{R}_j)_{lk} = \sum_{k,l} \big(\widehat{\mu}_i^* (\widehat{R}_i)_{kl}\big) \cdot \big(\widehat{\mu}_j^* (\widehat{R}_j)_{lk}\big) = \\
			&= \sum_{k,l,m,n} (\widehat{Q}_i)_{km} \cdot (\widehat{P}_i)_{ml} \cdot (\widehat{Q}_j)_{ln} \cdot (\widehat{P}_j)_{nk} = \Tr(\widehat{Q}_i\widehat{P}_i\widehat{Q}_j\widehat{P}_j) \, . \qedhere
		\end{split} 
	\end{equation}
\end{proof}

Hence one has lifted the classical correspondence of Proposition~\ref{prop:pull_back_classical} to a quantum correspondence. 
This can be used to show that the quantum Hamiltonian reduction of $\rho_1(\widehat{H}_i)$ with respect to the action of the group $G^{\infty} = \prod_{i \in I^{\infty}} \GL(V_i)$ yields~\eqref{eq:kz_hamiltonians}. 
With this end in mind we now recall the definition of the quantum Hamiltonian reduction of an associative algebra (see~\cite{etingov_2007_calogero_moser_systems}). 

Start from an abstract viewpoint: let $A$ be an associative algebra, and $\mathfrak{h}$ a Lie algebra. 

\begin{defn}
	An $\mathfrak{h}$-action on $A$ is morphism $\widehat{\xi} \colon \mathfrak{h} \longrightarrow \Der(A)$ of Lie algebras. 
	A quantum comoment map for $\widehat{\xi}$ is a morphism $\widehat{\mu}^* \colon U(\mathfrak{h}) \longrightarrow A$ of associative algebras whose restriction to $\mathfrak{h}$ lifts $\widehat{\xi}$ through the adjoint action of $A$ on itself:
	\begin{center}
		\begin{tikzpicture}[> = to]
			\node (a) at (0,0) {$\mathfrak{h}$};
			\node (b) at (3,0) {$\Der(A)$};
			\node (c) [above of = b] {$A$};
			\path
			(a) edge node[below]{$\widehat{\xi}$} (b)
			(a) edge [densely dashed] node [above left]{$\left.\widehat{\mu}^*\right|_{\mathfrak{h}}$} (c)
			(c) edge node [right]{$\ad$}(b);
		\end{tikzpicture}
	\end{center}
\end{defn}

The quantum action is then uniquely determined by $\widehat{\xi}(x).a = [\widehat{\mu}(x),a]$, for $x \in \mathfrak{h}$ and $a \in A$.

Let now $\widehat{\mu^*}$ be the quantum comoment map for a $\mathfrak{h}$ action on the associative algebra $A$, and $\widehat{\mathfrak{I}} \subseteq U(\mathfrak{h})$ a two-sided ideal.

\begin{defn}
	The quantum Hamiltonian reduction of $A$ with respect to the quantum comoment map $\widehat{\mu}^*$ and the ideal $\widehat{\mathfrak{I}}$ is the quotient ring 
	\begin{equation}
		R_q(A,\mathfrak{h},\widehat{\mathfrak{I}}) \coloneqq A^{\mathfrak{h}} \big\slash \widehat{\mathfrak{J}}^{\mathfrak{h}} \, ,
	\end{equation}
	where $A^{\mathfrak{h}} \coloneqq \left.\big\{b \in A \right| \bigl[\widehat{\mu}^*(\mathfrak{h}),b\bigr] = 0\big\}$ is the $\mathfrak{h}$-invariant part, $\widehat{\mathfrak{J}} \subseteq A$ is the left ideal generated by $\widehat{\mu}^*(\widehat{\mathfrak{I}})$ inside $A$, and $\widehat{\mathfrak{J}}^{\mathfrak{h}} \coloneqq \widehat{\mathfrak{J}} \cap A^{\mathfrak{h}}$.
\end{defn}

To apply this to the case at hand, consider the Lie group $G^{\infty} = \prod_{i \in I^{\infty}} \GL(V_i)$ and its Lie algebra $\mathfrak{g}^{\infty} \simeq (\mathfrak{g}^{\infty})^*$. 
Then analogous computations to those of Proposition~\ref{prop:quantum_comoment} provides a quantum comoment map $\widehat{\mu}^*_{\infty} \colon U\big((\mathfrak{g}^{\infty})^*\big) \longrightarrow A = W(\mathbb{M}^*,\{\cdot,\cdot\})$. 
Namely, if one denotes $(S_i)_{jk} \in \mathfrak{gl}(V_i)$ the coordinate functions, then one can consider
\begin{equation}
	\label{eq:global_quantum_moment_2}
	\widehat{\mu}^*_{\infty}\big((\widehat{S}_i)_{jk}\big) \coloneqq - \sum_m (\widehat{P}_i)_{jm} \cdot (\widehat{Q}_i)_{mk} \, .
\end{equation}

Choose any ideal $\widehat{\mathfrak{I}} \subseteq U\big((\mathfrak{g}^{\infty})^*\big)$, and define the quantum Hamiltonian reduction $R(A,\mathfrak{g}^{\infty},\widehat{\mathfrak{I}})$, as well as the canonical projection $\pi_{\widehat{\mathfrak{I}}} \colon A^{\mathfrak{g}^{\infty}} \longrightarrow R(A,\mathfrak{g}^{\infty},\widehat{\mathfrak{I}})$. 
To define the reduction of the quantum Hamiltonians one can provide an easy quantisation of Corollary~\ref{cor:invariance}. 
Recall $\widehat{H} = G^{\infty} \times \GL(W^0)$ is the global group acting on $\mathbb{M}$ by simultaneous base changing.

\begin{lem}
	\label{lem:quantum_invariance}
	The simply-laced quantum Hamiltonians of Definition~\ref{def:level_1_slqc} are invariant for the $\widehat{H}$-action on $A$. 
\end{lem}

\begin{proof}
	More generally let $R$ be a ring, and consider the standard trace on the space of square matrices with coefficients in $R$. Then the trace is invariant under conjugation with respect to matrices whose coefficients lies in the centre $Z(R) \subseteq R$. One can then apply this for $R = W(\mathbb{M}^*,\{\cdot,\cdot\})$ and $g \in \widehat{H}$ arbitrary, since $g$ has coefficients in $\mathbb{C} = Z(A)$.
\end{proof}

\begin{defn}
	The element $\pi_{\widehat{\mathfrak{I}}}(\rho_1(\widehat{H}_i)) \in R(A,\mathfrak{g}^{\infty},\widehat{\mathfrak{I}})$ is the quantum Hamiltonian reduction of the operator $\rho_1(\widehat{H}_i)$ at the ideal $\widehat{\mathfrak{I}}$.
\end{defn}

Now one can conclude as in the previous \S~\ref{sec:reduced_star=schlesinger}. 
Assume again to have chosen $V_i = W^0$ for all $i \in I^{\infty}$, getting the identifications $G^{\infty} \simeq \GL(W^0)^m$, $\mathfrak{g}^{\infty} \simeq \mathfrak{g}^m$, $U(\mathfrak{g}^*)^{\otimes m} \simeq U\big((\mathfrak{g}^{\infty})^*\big)$. 
Also, an ideal $\widehat{\mathfrak{I}} \subseteq U\big((\mathfrak{g}^{\infty})^*\big)$ now corresponds to $\widehat{\mathfrak{I}}' \subseteq U(\mathfrak{g}^*)^{\otimes m}$, providing an isomorphism 
\begin{equation}
	U\big((\mathfrak{g}^{\infty})^*\big) \big\slash \widehat{\mathfrak{I}} \simeq U(\mathfrak{g}^*)^{\otimes m} \big\slash \widehat{\mathfrak{I}}' \, .
\end{equation}

\begin{prop}
	\label{prop:natural_quantum_morphism_kz}
	There is a natural injective ring morphism
	\begin{equation}
		\widehat{\varphi} \colon U(\mathfrak{g}^*)^{\otimes m} \big\slash \widehat{\mathfrak{I}}' \longrightarrow R(A,\mathfrak{g}^{\infty},\widehat{\mathfrak{I}}) \, ,
	\end{equation}
	induced by the composition of the map $\widehat{\mu}^*$ of~\eqref{eq:global_classical_moment} with the canonical projection $\pi_{\widehat{\mathfrak{I}}}$ on the quantum Hamiltonian reduction.
\end{prop}

\begin{proof}
	Again, the nontrivial point is that the image of $\widehat{\mu}^*$  is contained in the invariant part $A^{\mathfrak{g}^{\infty}}$. 
	If this were so, then
	\begin{equation}
		\pi_{\widehat{\mathfrak{I}}} \circ \widehat{\mu}^* \colon U(\mathfrak{g}^*)^{\otimes m} \longrightarrow R(A,\mathfrak{g}^{\infty},\widehat{\mathfrak{I}})
	\end{equation}
	would be well defined and induce the desired injective morphism $\widehat{\varphi}$.

	Finally, the proof that $\bigl[\Imm(\widehat{\mu}^*),\Imm(\widehat{\mu}_{\infty}^*)\bigr] = 0$ can be given in coordinates, using formul\ae{}~\eqref{eq:global_quantum_moment} and~\eqref{eq:global_quantum_moment_2}. 
	If one fixes $i \in I^{\infty}$, then 
	\begin{equation}
		\begin{split}
			\Big[\widehat{\mu}^*\big((\widehat{R}_i)_{jk}\big)&,\widehat{\mu}_{\infty}^*\big((\widehat{S}_i)_{lm}\big)\Big] = -\sum_{n,o} \bigl[(\widehat{Q}_i)_{jn} \cdot (\widehat{P}_i)_{nk},(\widehat{P}_i)_{lo} \cdot (\widehat{Q}_i)_{om}\bigr] \\
			&= \sum_{n,o} \delta_{jo}\delta_{nl}(\widehat{P}_i)_{nk} \cdot (\widehat{Q}_i)_{om} - \delta_{nm}\delta_{ok}(\widehat{P}_i)_{lo} \cdot (\widehat{Q}_i)_{jn} \\
			&= (\widehat{P}_i)_{lk} \cdot (\widehat{Q}_i)_{jm} - (\widehat{P}_i)_{lk} \cdot (\widehat{Q}_i)_{jm} = 0 \, .
		\end{split}
	\end{equation}
	The case where $i \neq j \in I^{\infty}$ is trivial, as $\{(Q_i)_{kl},(P_j)_{mn}\} = 0$ for all $i \neq j$ and for all $k,l,m,n$, hence their Weyl quantisations commute in the Weyl algebra.
\end{proof}

All the preparation has been done to show that~\eqref{eq:kz_hamiltonians} is the quantum Hamiltonian reduction of the simply-laced quantum Hamiltonians.

\begin{thm}
	One has 
	\begin{equation}
		\widehat{\varphi}\big([\widehat{H}_i^{\KZ}]_{\widehat{\mathfrak{I}'}}\big) = \pi_{\widehat{\mathfrak{I}}}(\rho_1(\widehat{H_i})) \, ,
	\end{equation}
	for all $i \in I^{\infty}$, where $[\widehat{H}_i^{\KZ}]_{\widehat{\mathfrak{I}}'}$ is the projection of the KZ Hamiltonian to the quotient ring $U(\mathfrak{g}^*)^{\otimes m} \big\slash \widehat{\mathfrak{I}}'$.
\end{thm}

This follows from Proposition~\ref{prop:pull_back_quantum}. 
Hence indeed the quantum Hamiltonian reduction of this particular case of the simply-laced quantum connection yields the KZ connection---at any choice of ideal.

\section{The DMT connection and the star with dual reading}
\label{sec:reduction_DMT}

In this section we show that the connection of De Concini and Millson--Toledano~Laredo~\cite{millson_toledano_laredo_2005_casimir_connection} (DMT) is semiclassically equivalent to the quantum Hamiltonian reduction of the simply-laced quantum connection for the Harnad-dual picture~\cite{harnad_1994_dual_isomonodromic_deformations} of the previous section. 

\subsection{Simply-laced quantum connection of a star: Harnad-dual version}
\label{sec:slqc_dual_star}

Consider here $k = 2$, $a(J) = \{\infty,0\} \subseteq \mathbb{C} \cup \{\infty\}$ and $T^{\infty} = 0$, from the general setup of \S~\ref{sec:classical_systems}.
The graph $\mathcal{G}$ is a star centred at the node $\infty$, as in Figure~\ref{fig:KZ_DMT}. We attach finite-dimensional spaces $\{V_{\infty},V_i\}_{i \in I^0}$ to the nodes $I = \{\infty\} \coprod I^0$ of $\mathcal{G}$. 
Then we set $W^{\infty} = V_{\infty}$, $U^{\infty} = W^0 = \bigoplus_{i \in I^0} V_i$ and $V = W^{\infty} \oplus W^0$. 
The symplectic phase-space $(\mathbb{M},\omega_a)$ is
\begin{equation}
	\mathbb{M} = \Hom(W^0,W^{\infty}) \oplus \Hom(W^{\infty},W^0) \, ,
\end{equation}
with symplectic form $\omega_a = \Tr(dQ \wedge dP)$, and the space of times becomes $\mathbf{B} = \mathbb{C}^{I^0} \setminus \{\diags\}$. 
Write $T^0 = \sum_{i \in I^0} t_i\Id_i$, where $\Id_i$ is the idempotent for $V_i \subseteq W^0$---so that $\{t_i\}_{i \in I^0} \in \mathbf{B}$. 
A generic element of $\mathbb{M}$ looks like $\Gamma = 
\begin{pmatrix} 
    & Q \\
    P &
\end{pmatrix} 
\in \End(V)$, and the special example of meromorphic connections~\eqref{eq:meromorphic_connection_slqc} coded by these data are 
\begin{equation}
	\label{eq:dmt_meromorphic_connection}
	\nabla = d - \left(T^0 + \frac{QP}{z}\right)dz \, .
\end{equation}

\begin{rem}
	These are indeed the Harnad-dual of the rational differential operators~\eqref{eq:kz_meromorphic_connection}, as discussed in~\cite[Appendix B]{boalch_2012_simply_laced_isomonodromy_systems}, but with a change of notation: after the permutation 
	\begin{equation}
		(W^0,W^{\infty},T^0,T^{\infty},Q,P) \longmapsto (W^{\infty},W^0,-T^{\infty},T^0,-P,Q) 
	\end{equation}
	we rename all terms to keep consistency with~\eqref{eq:meromorphic_connection_slqc}. 
	Namely we insist that $W^0 = U^{\infty}$ be the fibre of the trivial vector bundle on which the meromorphic connections are defined, and that the spectrum of $T^0$ carries the irregular times (cf.~\cite[\S~8.3]{boalch_2012_simply_laced_isomonodromy_systems}). 
\end{rem}

If one sets $\widetilde{QP} = \ad_{T^0}^{-1}\bigl[dT^0,QP\bigr]$, then the isomonodromic deformations of~\eqref{eq:dmt_meromorphic_connection} are controlled by the simply-laced Hamiltonian system
\begin{equation}
	\varpi = \frac{1}{2}\Tr\big(\widetilde{QP}QP\big) \, .
\end{equation}
This Hamiltonian system is defined on the trivial symplectic fibration $\mathbb{M} \times \mathbf{B} \to \mathbf{B}$, and spells out as
\begin{equation}
	\label{eq:slims_dual_star} 
	H_i \coloneqq \langle \varpi,\partial_{t_i}\rangle = \sum_{i \neq j \in I^0} \frac{\Tr(P_jQ_jP_iQ_i)}{t_i - t_j} \, .
\end{equation}

The universal simply-laced quantum connection of Definition~\ref{def:slqc} specialises to
\begin{equation}
	\widehat{\nabla}= d - \widehat{\varpi} = d - \sum_{i \in I^0} \left(\sum_{j \neq i \in I^0} \frac{\Tr(\widehat{P}_j\widehat{Q}_j\widehat{P}_i\widehat{Q}_i)}{t_i - t_j} \cdot \hslash^4\right)dt_i \, .
\end{equation}
This is a connection on the trivial bundle $\widehat{A} \times \mathbf{B} \to \mathbf{B}$ of noncommutative algebras, where as above $A \coloneqq W(\mathbb{M}^*,\{\cdot,\cdot\})$ and $\widehat{A}$ is obtained from the Rees algebra of $A$ as in Proposition~\ref{prop:rees}. 
The main Theorem~\ref{thm:quantum_flatness} assures that $\widehat{\nabla}$ is strongly flat. 

Finally, one has the explicit development
\begin{equation}
	\label{eq:slqc_dual_star}
	\widehat{H}_i \coloneqq \langle \widehat{\varpi},\partial_{t_i} \rangle = \sum_{i \neq j} \frac{\Tr(\widehat{P}_j\widehat{Q}_j\widehat{P}_i\widehat{Q}_i)}{t_i - t_j} \cdot \hslash^4 \, ,
\end{equation}
for the universal simply-laced quantum Hamiltonian at the node $i \in I^0$. 
The simply-laced quantum Hamiltonians are instead the functions $\rho_1(\widehat{H}_i) \colon \mathbf{B} \to A$, as in Definition~\ref{def:level_1_slqc}.

\subsection{DMT connection}
\label{sec:DMT}

In this section we recall the construction of the DMT connection~\cite{millson_toledano_laredo_2005_casimir_connection}, and we give an explicit development of the associated quantum Hamiltonians in the case of $\mathfrak{g} = \mathfrak{gl}_n(\mathbb{C})$. 

Consider a simple Lie algebra $\mathfrak{g}$, and choose a Cartan subalgebra $\mathfrak{t} \subseteq \mathfrak{g}$ with associated root system $\mathcal{R} = \mathcal{R}(\mathfrak{g},\mathfrak{t}) \subseteq \mathfrak{t}^*$. 
Let
\begin{equation}
	\mathfrak{t}_{\reg} \coloneqq \mathfrak{t} \setminus \bigcup_{\alpha \in \mathcal{R}} \Ker(\alpha)
\end{equation}
be the regular part of the Cartan algebra, and denote $K \colon \mathfrak{g} \otimes \mathfrak{g} \to \mathbb{C}$ the Cartan--Killing form of $\mathfrak{g}$. 

One now defines a strongly flat connection $\widehat{\nabla}^{\DMT}$ on the bundle $U(\mathfrak{g}) \times \mathfrak{t}_{\reg} \to \mathfrak{t}_{\reg}$, as follows. 
For all $\alpha \in \mathcal{R}$ choose an $\mathfrak{sl}_2$-triplet of vectors $e_{\alpha} \in \mathfrak{g}_{\alpha}$, $f_{\alpha} \in \mathfrak{g}_{-\alpha}$, $h_{\alpha} = [e_{\alpha},f_{\alpha}] \in \mathfrak{t}$, and then define the \emph{DMT connection} as
\begin{equation}
	\label{eq:dmt_connection}
	\widehat{\nabla}^{\DMT} = d - \widehat{\varpi}^{\DMT} \coloneqq d - \sum_{\alpha \in R} \frac{K(\alpha,\alpha)}{2}(\widehat{e}_{\alpha} \cdot \widehat{f}_{\alpha} + \widehat{f}_{\alpha} \cdot \widehat{e}_{\alpha})\frac{d\alpha}{\alpha} \, ,
\end{equation}
where $K(\alpha,\alpha) \in \mathbb{R}_{> 0}$ is the length squared of the root $\alpha$, computed using the dual of the Killing form (which we also denote $K$). 

We now specialise the DMT connection to the case of $\mathfrak{g} \coloneqq \mathfrak{gl}_n(\mathbb{C})$, even though this algebra is not simple. 
Indeed, if we consider an invariant nondegenerate symmetric bilinear form $K$ on $\mathfrak{g}$ then we may perform the same construction as above, and we will use $K(A,B) \coloneqq \frac{1}{2}\Tr(AB)$. 
Next we choose $\mathfrak{t} \subseteq \mathfrak{g}$ to be the subalgebra of diagonal matrices, so that the roots are given by $\alpha_{ij}(\diag(t_1, \dotsc, t_n)) \coloneqq t_i - t_j$ for $1 \leq i \neq j \leq n$. 
Moreover, if $(e_{ij})_{ij}$ is the canonical basis then 
\begin{equation}
	\mathfrak{g}_{\alpha_{ij}} = \spann_{\mathbb{C}} \{e_{ij}\}, \qquad \mathfrak{g}_{-\alpha_{ij}} = \mathfrak{g}_{\alpha_{ji}} = \spann_{\mathbb{C}} \{e_{ji}\} \, , 
\end{equation}
so that $h_{\alpha_{ij}} = [e_{ij},e_{ji}] = e_{ii} - e_{jj} \in \mathfrak{t}$, and the length squared of all roots equals 2. 

Finally, notice that if one introduces the global coordinates $\{t_i\}_i$ on $\mathfrak{t}_{\reg}$ obtained from the restriction of the standard coordinates on $\mathfrak{t} \simeq \mathbb{C}^n$, then 
\begin{equation}
	d\log(\alpha_{ij}) = d\big(\log(t_i - t_j)\big) = \frac{dt_i - dt_j}{t_i - t_j} \, .
\end{equation}
Hence one has the following expansion of the \emph{DMT Hamiltonians} for $\mathfrak{g} = \mathfrak{gl}_n(\mathbb{C})$:
\begin{equation}
	\label{eq:dmt_hamiltonians}
	\widehat{H}_i^{\DMT} \coloneqq \langle \widehat{\varpi}^{\DMT}, \partial_{t_i} \rangle = \frac{1}{2}\sum_{j \neq i} \frac{\widehat{e}_{ij} \cdot \widehat{e}_{ji} + \widehat{e}_{ji} \cdot \widehat{e}_{ij}}{t_i - t_j} \, .
\end{equation}

\subsection{Dual Schlesinger system}

In this section we define the dual Schlesinger system and provide an explicit expansion of its Hamiltonians.

Let $G$ be a reductive group with Lie algebra $\mathfrak{g}$. 
Equip $\mathfrak{g}$ with a nondegenerate invariant symmetric bilinear form $K \in \mathfrak{g}^* \otimes \mathfrak{g}^*$, whence $\mathfrak{g} \simeq \mathfrak{g}^*$ carries the Lie--Poisson structure. 
Consider the trivial Poisson fibration $\mathfrak{g} \times \mathfrak{t}_{\reg} \to \mathfrak{t}_{\reg}$, where $\mathfrak{t} \subseteq \mathfrak{g}$ is a Cartan subalgebra. 
Choose then $R, T^0 \in \mathfrak{g}$, with $T^0$ regular semisimple, and consider the meromorphic connection on the trivial principal $G$-bundle over $\mathbb{C} P^1$ defined by the (global) $\mathfrak{g}$-valued 1-form 
\begin{equation}
	\label{eq:meromorphic_connection_dmt}
	\alpha = \left(T^0 + \frac{R}{z}\right)dz \, .
\end{equation}
The isomonodromy equations of such connections admit an Hamiltonian formulation~\cite{boalch_2002_g_bundles_isomonodromy_quantum_weyl_groups}. 
To write down the Hamiltonians set $\widetilde{R} \coloneqq \ad^{-1}_{T^0}[dT^0,R]$, and define the 1-form
\begin{equation}
	\varpi^{\dSch} \coloneqq K\left(R,\widetilde{R}\right) \, .
\end{equation}
Now take the global coordinates $(t_i)_i$ on $\mathfrak{t}_{\reg}$ as an open subset of the vector space $\mathfrak{t}$, and define Hamiltonians $H^{\dSch}_i \colon \mathfrak{g} \times \mathfrak{t}_{\reg} \to \mathbb{C}$ controlling the isomonodromy deformations of~\eqref{eq:meromorphic_connection_dmt} by $H^{\dSch}_i \coloneqq \langle \varpi^{\dSch}, \partial_{t_i} \rangle$. 
These are called the \emph{dual Schlesinger Hamiltonians}.

Specialising all this to $G = \GL_n(\mathbb{C})$, $\mathfrak{g} = \mathfrak{gl}_n(\mathbb{C})$ and $K(\cdot,\cdot) = \frac{1}{2}\Tr(\cdot,\cdot)$ one finds
\begin{equation}
	\varpi^{\dSch} = \frac{1}{2}\Tr\big(\widetilde{R}R\big) = \frac{1}{2}\sum_{i \neq j} R_{ij}R_{ji} \frac{dt_i - dt_j}{t_i - t_j} \, ,
\end{equation}
and one has the following expansion of the dual Schlesinger Hamiltonians:
\begin{equation}
	\label{eq:dual_schlesinger_hamiltonians}
	H^{\dSch}_i = \sum_{j \neq i} \frac{R_{ij}R_{ji}}{t_i - t_j} \, .
\end{equation}

\subsection{DMT is a quantisation of dual Schlesinger}
\label{sec:quantum_dual_Schlesinger=DMT}

It is shown in~\cite{boalch_2002_g_bundles_isomonodromy_quantum_weyl_groups} that the DMT connection~\eqref{eq:dmt_connection} is the PBW-quantisation of the dual Schlesinger system. 
In this section we give a proof adapted to our notations in the case of $G = \GL_n(\mathbb{C})$.

\begin{thm}
	One has $\mathcal{Q}_{\PBW}(H_i^{\dSch}) = \widehat{H}_i^{\DMT}$ for all $i \in I^0$.
\end{thm}

\begin{proof}
	Composing the dual Schlesinger Hamiltonians~\eqref{eq:dual_schlesinger_hamiltonians} with the trace-duality turns them into sections of the trivial bundle $\Sym(\mathfrak{g}) \times \mathfrak{t}_{\reg} \to \mathfrak{t}_{\reg}$. 
	Explicitly, these sections read
	\begin{equation}
		(t_i)_i \longmapsto \sum_{j \neq i} \frac{e_{ji} \otimes e_{ij}}{t_i - t_j} \, .
	\end{equation}
	If we fix $i$ and $j$ then the PBW-quantisation~\ref{def:pbw_quantisation} of the numerator is 
	\begin{equation}
		\mathcal{Q}_{\PBW}(e_{ji} \otimes e_{ij}) = \frac{\widehat{e}_{ij} \cdot \widehat{e}_{ji} + \widehat{e}_{ji} \cdot \widehat{e}_{ij}}{2} \, ,
	\end{equation}
	and the result follows by linearity, looking at the expansion~\eqref{eq:dmt_hamiltonians}.
\end{proof}

\subsection{Classical Hamiltonian reduction}
\label{sec:reduced_dual_star=dual_Schlesinger}

In this section we show that the classical Hamiltonian reduction of the simply-laced isomonodromy Hamiltonians~\eqref{eq:slims_dual_star} yields the dual Schlesinger Hamiltonians~\eqref{eq:dual_schlesinger_hamiltonians}.

The first necessary restriction is the following: take $\dim(V_i) = 1$ for all $i \in I^0$. 
In this case $\dim(W^0) = |I^0| \eqqcolon n$, and indeed $\mathbf{B} = \mathbb{C}^n \setminus \{\diags\} \simeq \mathfrak{t}_{\reg}$, where $\mathfrak{t}$ is the standard Cartan subalgebra of $\mathfrak{g} \coloneqq \mathfrak{gl}(W^0) \simeq \mathfrak{gl}_n$. 
As in \S~\ref{sec:reduced_star=schlesinger}, we consider the comoment map for the $\GL(W^0)$-action on the symplectic cotangent bundle $T^*\Hom(W^{\infty},W^0) \simeq \mathbb{M}$, which is given by the matrix product $(Q,P) \longmapsto QP$. 
Up to using the usual trace-duality $\mathfrak{g }\simeq \mathfrak{g}^*$, this is a map
\begin{equation}
	\mu^* \colon \Sym(\mathfrak{g}^*) \longrightarrow A_0 \, .
\end{equation}
To find an explicit formula for it, write again $R_{ij}$ for the coordinate functions on $\mathfrak{gl}(W^0)$.
The map $Q_i \colon W^{\infty} \to V_i$ is a row vector, and similarly $P_j \colon V_j \to W^{\infty}$ is a column vector, hence one may write $(dQ_i)_k$, $(dP_i)_k$ the coordinate functions on $\mathbb{M}$. 
Then 
\begin{equation}
	\label{eq:global_classical_moment_dual}
	\mu^*(R_{ij}) = \sum_k (Q_i)_k(P_j)_k \, ,
\end{equation} 
since the coefficients of the residue $R = QP$ are the numbers $R_{ij} = Q_iP_j = \sum_k (Q_i)_k(P_j)_k$.

We now show that the map~\eqref{eq:global_classical_moment_dual} matches up the simply-laced Hamiltonians~\eqref{eq:slims_dual_star} with the dual Schlesinger Hamiltonians~\eqref{eq:dual_schlesinger_hamiltonians}.

\begin{prop}
	\label{prop:pull_back_classical_2}
	One has $\mu^*(H_i^{\dSch}) = H_i$ for all $i \in I^0$.
\end{prop}

\begin{proof} 
	By linearity, it is enough to show that $\mu^*(R_{ij}R_{ji}) = \Tr(P_jQ_jP_iQ_i)$, which follows from the above formula: 
	\begin{equation}
		\mu^*(R_{ij}R_{ji}) = Q_iP_jQ_jP_i = \Tr(Q_iP_jQ_jP_i) = \Tr(P_jQ_jP_iQ_i) \, .
	\end{equation}
	In the second identity we used the fact that the endomorphism $Q_iP_jQ_jP_i \colon V_i \to V_i$ is just a complex numbers---equal to its trace.
\end{proof}

The same exact steps of~\ref{sec:reduced_star=schlesinger} then show that~\eqref{eq:dual_schlesinger_hamiltonians} is the Hamiltonian reduction of~\eqref{eq:slims_dual_star} with respect to the action of $G^{\infty} = \GL(W^{\infty})$ on $\mathbb{M}$, at any given ideal/orbit. 
Namely, if $\mathfrak{g}^{\infty} = \mathfrak{gl}(W^{\infty})$ acts with comoment map $\mu_{\infty}^* \colon \Sym(\mathfrak{g}^{\infty}) \to A_0$ and $\mathfrak{I} \subseteq \Sym(\mathfrak{g}^{\infty})$ is an ideal, then the Hamiltonian reduction $R(A_0,\mathfrak{g}^{\infty},\mathfrak{I})$ is defined as in~\ref{def:hamiltonian_reduction}, together with a canonical projection 
\begin{equation}
	\pi_{\mathfrak{I}} \colon A_0^{\mathfrak{g}^{\infty}} \longrightarrow R(A_0,\mathfrak{g}^{\infty},\mathfrak{I}) \, .
\end{equation}
This defines the Hamiltonian reduction $\pi_{\mathfrak{I}}(H_i)$ of the simply-laced Hamiltonian~\eqref{eq:slims_dual_star}, and geometrically the choice of the ideal $\mathfrak{I}$ corresponds to fixing a coadjoint $G^{\infty}$-orbit $\mathcal{O} \subseteq (\mathfrak{g}^{\infty})^*$.

Now one needs to find a suitable ideal $\mathfrak{I}' \subseteq \Sym(\mathfrak{g}^*)$ to match up with $\mathfrak{g}$, and this can be done with a further restriction: assume that $W^{\infty} = W^0$, i.e. that $\dim(W^{\infty}) = n$. 
Then there are canonical identification $\Sym(\mathfrak{g}^*) \simeq \Sym(\mathfrak{g}^{\infty})$, and taking the ideal $\mathfrak{I}'$ corresponding to $\mathfrak{I}$ in this isomorphism one constructs a natural injective morphism 
\begin{equation}
	\varphi \colon \Sym(\mathfrak{g}^*) \big\slash \mathfrak{I}' \longrightarrow R(A_0,\mathfrak{g}^{\infty},\mathfrak{I}) \, ,
\end{equation}
induced by the composition of the projection $\pi_{\mathfrak{I}}$ after $\mu^*$. 
The fact that the image of $\Sym(\mathfrak{g}^*)$ inside $A_0$ Poisson-commute with that of $\mu_{\infty}^*$ is due to the fact that the Hamiltonian $\GL(W^0)$- and $G^{\infty}$-actions commute. 
Finally Proposition~\ref{prop:pull_back_classical_2} shows the following.

\begin{thm}
	The classes of the dual Schlesinger Hamiltonians~\eqref{eq:dual_schlesinger_hamiltonians} inside $\Sym(\mathfrak{g}) \big\slash \mathfrak{I}'$ match up with $\pi_{\mathfrak{I}}(H_i) \in R(A_0,\mathfrak{g}^{\infty},\mathfrak{I})$ under the natural correspondence $\varphi$.
\end{thm}

Hence indeed the dual Schlesinger system corresponds to the Hamiltonian reduction of this particular case of the simply-laced isomonodromy system, for every choice of ideal/coadjoint orbit. 

\begin{rem}
	After considering the reduction with respect to the $\GL(W^{\infty})$-action, there is still a residual action at the peripheral nodes. 
	This action is not that of the whole group $\GL(W^0)$, but rather of the subgroup $\prod_{i \in I^0} \GL(V_i) \subseteq \GL(W^0)$, which is a maximal torus. 
	Taking the quotient with respect to the full action yields the moduli space $\mathcal{M}^*_{\dR}$ of isomorphism classes of meromorphic connections~\eqref{eq:dmt_meromorphic_connection} on a trivial holomorphic vector bundle. 
	This moduli space is isomorphic to the moduli space for logarithmic connections of the previous section (the isomorphism being the Harnad duality~\cite{harnad_1994_dual_isomonodromic_deformations}), and thus we have two different descriptions of the same complex symplectic manifold.
\end{rem}

\subsection{Quantum Hamiltonian reduction}
\label{sec:corrected_quantum_reduced_dual_star=DMT}

Here we show that the quantum Hamiltonian reduction of the simply-laced quantum Hamiltonian~\eqref{eq:slqc_dual_star} yields a quantum Hamiltonian system whose semiclassical limit is the same as that of the DMT system~\eqref{eq:dmt_hamiltonians}. 
The idea is again to rephrase the ``quantum'' change of variable $\widehat{R}_{ij} \coloneqq \widehat{Q}_i\widehat{P}_j$ in algebraic terms. 

As in Proposition~\ref{prop:quantum_comoment}, one can construct a quantum comoment map $\widehat{\mu}^* \colon U(\mathfrak{g}^*) \to A$ by showing that the natural formula makes sense, that is:
\begin{equation}
	\widehat{\mu}^*(\widehat{R}_{ij}) = \sum_k (\widehat{Q}_i)_k \cdot (\widehat{P}_j)_k \, .
\end{equation}
Applying this morphism to the DMT Hamiltonians yields
\begin{equation}
	\label{eq:corrected_dmt_hamiltonian}
	\widehat{\mu}^*\big(\widehat{H}_i^{\DMT}\big) = \widehat{\mu}^* \left(\sum_{j \neq i} \frac{\widehat{R}_{ij}\widehat{R}_{ji}}{t_i - t_j}\right) = \sum_{j \neq i} \frac{\Tr(\widehat{Q}_i\widehat{P}_j\widehat{Q}_j\widehat{P}_i)}{t_i - t_j} \, .
\end{equation}

As in the previous section one can prove the quantum Hamiltonian reductions of~\eqref{eq:corrected_dmt_hamiltonian} with respect to the $G^{\infty}$-action---at a two-sided ideal $\widehat{\mathfrak{I}} \subseteq U(\mathfrak{g}^{\infty})$---yields the DMT Hamiltonians~\eqref{eq:dmt_hamiltonians}.
Moreover,~\eqref{eq:corrected_dmt_hamiltonian} is obtained from the simply-laced quantum Hamiltonian $\rho_1(\widehat{H}_1)$ by moving the anchor of the degenerate 4-cycles from their centre to a peripheral node. 
More precisely, if one replaces the quantum isomonodromy potential with 
\begin{equation}
	\widehat{W}'_i \coloneqq \sum_{m \in I_i \setminus \{i\}, j,l \in I \setminus I_i} \frac{\alpha_{ij}\alpha_{jm}\alpha_{ml}\underline{\alpha_{li}}}{t_i - t_m} \, ,
\end{equation}
then by construction one has $\widehat{\mu}^*\big(\widehat{H}_i^{\DMT}\big) = \Tr(\widehat{W}'_i)$.

We now proceed to show that these Hamiltonians have the same semiclassical limit of the simply-laced quantum Hamiltonians~\eqref{eq:slqc_dual_star}. 
To state this properly we introduce the formal deformation parameter $\hslash$ into the picture. 
Let then $\Rees(A) \subseteq A[\hslash]$ be the Rees algebra of $A$ as in Definition~\ref{def:rees_algebra}, and $\widehat{A} \subseteq A \llbracket \hslash \rrbracket$ the topologically free $\mathbb{C} \llbracket \hslash \rrbracket$-algebra defined in~\ref{prop:rees}. 
Then the system~\eqref{eq:corrected_dmt_hamiltonian} can be encoded into the connection
\begin{equation}
	\widehat{\nabla}' \coloneqq d - \sum_{i \in I^0} \widehat{H}'_i dt_i \, ,
\end{equation}
where $H_i' \coloneqq \Tr_{\hslash}(\widehat{W}_i')$. This connection is defined on the trivial bundle $\widehat{A} \times \mathbf{B} \to \mathbf{B}$ as the simply-laced quantum connection, and we compare the two.

\begin{thm}
	\label{prop:quantum_correction}
	The $\widehat{A}$-valued one-form $\widehat{\nabla} - \widehat{\nabla}'$ on $\mathbf{B}$ vanishes in the semiclassical limit.
\end{thm}

\begin{proof}
	We must show that the element
	\begin{equation}
		\langle\widehat{\nabla} - \widehat{\nabla}',\partial_{t_i}\rangle = \sum_{j \neq i} \frac{\Tr(\widehat{Q}_i\widehat{P}_j\widehat{Q}_j\widehat{P}_i) - \Tr(\widehat{P}_j\widehat{Q}_j\widehat{P}_i\widehat{Q}_i)}{t_i - t_j} \cdot \hslash^4 \in \Rees(A) \subseteq \widehat{A}
	\end{equation}
	lies in the kernel of the semiclassical limit~\eqref{eq:semiclassical_limit} for all $\{t_i\}_i \in \mathbf{B}$. 
	More is true: all summands have vanishing semiclassical limit because 
	\begin{equation}
		\sigma_4 \Big(\Tr(\widehat{Q}_i\widehat{P}_j\widehat{Q}_j\widehat{P}_i)\Big) = \Tr(Q_iP_jQ_jP_j) = \Tr(P_jQ_jP_iQ_i) = \sigma_4\Big(\Tr(\widehat{P}_j\widehat{Q}_j\widehat{P}_i\widehat{Q}_i)\Big) \, ,
	\end{equation}
	in the identification $\gr(A) \simeq A_0$.
\end{proof}

Hence indeed one can add a semiclassically vanishing term to this particular case of the simply-laced quantum connection, so that the quantum Hamiltonian reduction equals the DMT connection---at any choice of ideal.

\section{The FMTV connection and the generic complete bipartite graph}
\label{sec:reduction_FMTV}

In this section we combine the results of \S~\ref{sec:reduction_KZ} and \S~\ref{sec:reduction_DMT} to prove that the quantum Hamiltonian reduction of the simply-laced quantum connection for a generic complete bipartite graph is semiclassically equivalent to the FMTV connection of Felder--Markov--Tarasov--Varchenko~\cite{felder_markov_tarasov_varchenko_2000_dynamical_connection}. 

\subsection{Simply-laced quantum connection of a bipartite graph}
\label{sec:slqc_bipartite}

The constructions of \S~\ref{sec:slqc_star} and \S~\ref{sec:slqc_dual_star} generalise as follows. 
One still has $k = 2$ and $a(J) = \{\infty,0\}$, but $\mathcal{G}$ is now an arbitrary bipartite graph on nodes $I = I^0 \coprod I^{\infty}$. 
The base space of times becomes 
\begin{equation}
	\mathbf{B} = \mathbb{C}^{I^{\infty}} \setminus \{\diags\} \times \mathbb{C}^{I^0} \setminus \{\diags\} \, .
\end{equation}
Next we attach finite-dimensional vector spaces $\{V^{\infty}_i\}_{i \in I^{\infty}}$ and $\{V^0_i\}_{i \in I^0}$ to the nodes of $\mathcal{G}$, and set $W^{\infty} = \bigoplus_{i \in I^{\infty}} V^{\infty}_i$ and $W^0 = \bigoplus_{i \in I^0} V^0_i$, so that the space of representations of $\mathcal{G}$ in $V = W^{\infty} \oplus W^0$ is
\begin{equation}
	\mathbb{M} = \Hom(W^{\infty},W^0) \oplus \Hom(W^0,W^{\infty}) \, .
\end{equation}
It comes with the symplectic form $\omega_a = \Tr(dQ \wedge dP)$, where $Q \colon W^{\infty} \to W^0$ and $P \colon W^0 \to W^{\infty}$ are linear maps with components $Q_{ij} \colon V^{\infty}_j \to V^0_i$ and $P_{ij} \colon V^0_j \to V^{\infty}_i$. 
There is thus a coarser decomposition with $Q_i \colon V^{\infty}_i \to W^0$ and $P_i \colon W^0 \to V^{\infty}_i$. 
Finally, write $T^0 = \sum_{i \in I^0} t^0_i \Id^0_i$ and $T^{\infty} = \sum_{i \in I^{\infty}} t^{\infty}_i \Id^{\infty}_i$, where $\Id^0_i$ is the idempotent for $V^0_i \subseteq W^0$ and $\Id^{\infty}_i$ the idempotent for $V^{\infty}_i \subseteq W^{\infty}$. 

These data parametrise meromorphic connections as~\eqref{eq:meromorphic_connection_slqc}, which in this case specialise to
\begin{equation}
	\label{eq:fmtv_meromorphic_connection}
	\nabla = d - \left(T^0 + \sum_{i \in I^{\infty}} \frac{Q_iP_i}{z - t^{\infty}_i}\right)dz \, ,
\end{equation}
and are defined on the trivial vector bundle $W^0 \times \mathbb{C} P^1 \to \mathbb{C} P^1$. 

Setting $\widetilde{PQ} = \ad_{T^{\infty}}^{-1}\bigl[dT^{\infty},PQ\bigr]$ and $\widetilde{QP} = \ad_{T^0}^{-1}\bigl[dT^0,QP\bigr]$, then the isomonodromic deformations of~\eqref{eq:fmtv_meromorphic_connection} are coded by the simply-laced Hamiltonian system
\begin{equation}
	\varpi = \frac{1}{2}\Tr\big(\widetilde{PQ}PQ\big) + \frac{1}{2}\Tr\big(\widetilde{QP}QP\big) \, ,
\end{equation}
defined on the trivial symplectic fibration $\mathbb{M} \times \mathbf{B} \to \mathbf{B}$. 
The simply-laced Hamiltonians spell out as
\begin{equation}
	\begin{split}
		\label{eq:slims_bipartite}
		H^{\infty}_i \coloneqq \langle \varpi,\partial_{t^{\infty}_i} \rangle &= \sum_{k \in I^{\infty} \setminus \{i\}, j,l \in I^0} \frac{\Tr(P_{il}Q_{lk}P_{kj}Q_{ji})}{t^{\infty}_i - t^{\infty}_k} + \sum_{j \in I^0} t^0_j\Tr(P_{ij}Q_{ji}), \\
		H^0_j \coloneqq \langle \varpi,\partial_{t^0_j}\rangle &= \sum_{l \in I^0 \setminus \{j\},i,k \in I^{\infty}} \frac{\Tr(Q_{ji}P_{il}Q_{lk}P_{kj})}{t^0_j - t^0_l} + \sum_{i \in I^{\infty}} t^{\infty}_i\Tr(Q_{ji}P_{ij}) \, .
	\end{split}
\end{equation}

The universal simply-laced quantum connection of Definition~\ref{def:slqc} specialises to
\begin{equation}
	\widehat{\nabla} = d - \widehat{\varpi} = d - \sum_{i \in I^{\infty}} \widehat{H}^{\infty}_i dt^{\infty}_i - \sum_{j \in I^0} \widehat{H}^j_0 dt^0_j \, ,
\end{equation}
and is defined on the trivial vector bundle $\widehat{A} \times \mathbf{B} \to \mathbf{B}$, where $A \coloneqq W(\mathbb{M}^*,\{\cdot,\cdot\})$ is the Weyl algebra and $\widehat{A}$ is as in Proposition~\ref{prop:rees}. 
The main Theorem~\ref{thm:quantum_flatness} assures that $\widehat{\nabla}$ is strongly flat, and the explicit formul\ae{} for the universal simply-laced quantum Hamiltonians are 
\begin{equation}
	\begin{split}
		\label{eq:slqc_bipartite}
		\widehat{H}^{\infty}_i = &\sum_{k \in I^{\infty} \setminus \{i\}, j \neq l \in I^0} \frac{\Tr(\widehat{P}_{il}\widehat{Q}_{lk}\widehat{P}_{kj}\widehat{Q}_{ji})}{t^{\infty}_i - t^{\infty}_k} \cdot \hslash^4 + \sum_{k \in I^{\infty} \setminus \{i\},j \in I^0} \frac{\Tr(\widehat{Q}_{jk}\widehat{P}_{kj}\widehat{Q}_{ji}\widehat{P}_{ij})}{t^{\infty}_i - t^{\infty}_k} \cdot \hslash^4 \\
		&+ \frac{1}{2}\sum_{j \in I^0} t^0_j \Big(\Tr(\widehat{P}_{ij}\widehat{Q}_{ji}) + \Tr(\widehat{Q}_{ji}\widehat{P}_{ij})\Big) \cdot \hslash^2, \\
		\widehat{H}^0_j = &\sum_{l \in I^0 \setminus \{j\}, i \neq k \in I^{\infty}} \frac{\Tr(\widehat{Q}_{ji}\widehat{P}_{il}\widehat{Q}_{lk}\widehat{P}_{kj})}{t^0_j - t^0_l} \cdot \hslash^4 + \sum_{l \in I^0 \setminus \{j\},i \in I^{\infty}} \frac{\Tr(\widehat{P}_{jl}\widehat{Q}_{li}\widehat{P}_{ji}\widehat{Q}_{ji})}{t^0_j - t^0_l} \cdot \hslash^4 \\
		&+ \frac{1}{2}\sum_{i \in I^ {\infty}} t^{\infty}_i \Big(\Tr(\widehat{Q}_{ji}\widehat{P}_{ij}) + \Tr(\widehat{P}_{ij}\widehat{Q}_{ji})\Big) \cdot \hslash^2 \, .
	\end{split}
\end{equation}
The simply-laced quantum Hamiltonians are the functions $\rho_1(\widehat{H}^{\infty}_i), \rho_1(\widehat{H}^0_j) \colon \mathbf{B} \to A$, as in Definition~\ref{def:level_1_slqc}.

\subsection{Classical Hamiltonian reduction and the JMMS system}
\label{sec:reduced_bipartite=JMMS}

In this section we show that the classical Hamiltonian reduction of the simply-laced Hamiltonians~\eqref{eq:slims_bipartite} yields the system of Jimbo--Miwa--M\^ori--Sato~\cite{jimbo_miwa_mori_sato_1980_density_matrix_bose_gas} (JMMS). We start by providing explicit formul\ae{} for the Hamiltonians of the JMMS system.

It is shown in~\cite{boalch_2012_simply_laced_isomonodromy_systems} that~\eqref{eq:slims_bipartite} controls isomonodromic deformation equations which correspond to the lifted equations of~\cite{jimbo_miwa_mori_sato_1980_density_matrix_bose_gas}, i.e. Equation~A.5.9 of op. cit. 
Moreover, the change of variable $R_i = Q_iP_i$ provides the JMMS equations themselves, i.e. Equations ~4.44 and~A.5.1 of op. cit. 
We now rephrase this fact in our notation.

First, similarly to \S~\ref{sec:reduced_dual_star=dual_Schlesinger}, we restrict $T^0$ to have simple spectrum in order to recover the setup of~\cite{jimbo_miwa_mori_sato_1980_density_matrix_bose_gas}. 
Let $W^0$ be a vector space of dimension $n$, and consider elements $R_i \in \mathfrak{g} \coloneqq \mathfrak{gl}(W^0) \simeq \mathfrak{gl}_n(\mathbb{C})$. Set $\mathfrak{t} \subseteq \mathfrak{g}$ to be the standard Cartan subalgebra, and choose $T^0 = \diag(t^0_1,\dotsc,t^0_n) \in \mathfrak{t}_{\reg}$. 
Let also $\Conf_m(\mathbb{C}) \simeq \mathbb{C}^m \setminus \{\diags\}$ be the space of configurations of $m$-tuples of ordered points in the complex plane, and write $\{t^{\infty}_i\}_i$ such an $m$-tuple. 

The JMMS system is a time-dependent classical Hamiltonian system controlling the isomonodromic deformations of meromorphic connections of the form
\begin{equation}
	\nabla = d - \left(T^0 + \sum_{i = 1}^m \frac{R_i}{z - t^{\infty}_i}\right)dz \, ,
\end{equation}
on the trivial vector bundle $W^0 \times \mathbb{C} P^1 \to \mathbb{C} P^1$. 
The isomonodromy problem is the following: let $T^0$ and $\{t^{\infty}_i\}_i$ vary inside the product $\mathbf{B} \coloneqq \mathfrak{t}_{\reg} \times \Conf_m(\mathbb{C})$, and look for residues $R_i \in \mathfrak{g}$ such that the (extended) monodromy data of the new connection are the same as those of $\nabla$. 
This is the combination of the isomonodromy problems of \S~\ref{sec:reduction_KZ} and \S~\ref{sec:reduction_DMT}, and it also admits an Hamiltonian formulation via the \emph{JMMS Hamiltonians}, defined on the trivial Poisson fibration $\mathfrak{g}^m \times \mathbf{B} \to \mathbf{B}$.

The explicit formul\ae{} for the JMMS Hamiltonians are
\begin{equation}
	\label{eq:jmms_hamiltonians}
	\begin{split}
		H_i^{\JMMS,\infty} &= \sum_{k \neq i} \frac{\Tr(R_iR_k)}{t^{\infty}_i - t^{\infty}_k} + \Tr(R_iT^0) \, , \\
		H_j^{\JMMS,0} &= \sum_{k \neq j} \sum_{i,p}\frac{(R_i)_{jk}(R_p)_{kj}}{t^0_j - t^0_k} + \sum_i t^{\infty}_i \Tr(R_ie_{jj}) \, ,
	\end{split}
\end{equation}
where $e_{jj} \in \mathfrak{t}$ is a diagonal element of the canonical basis of $\mathfrak{g}$. 
Indeed, this is precisely Equation~A.5.13 of~\cite{jimbo_miwa_mori_sato_1980_density_matrix_bose_gas}, written in our notation.

\begin{rem}
	We have written down this half-expanded form to make it apparent that this is Equation~A.5.13 of op. cit., replacing $A_i, A_{\infty}, c_i, a_j$ with $R_i, T^0, t^{\infty}_i, t^0_j$, respectively. 
	The full expansion of the linear terms are
	\begin{equation}
		\Tr(R_iT^0) = \sum_{j,l} (R_i)_{jl}T^0_{lj} = \sum_{j,l} (R_i)_{jl}\delta_{lj}t^0_j = \sum_j t^0_j(R_i)_{jj} \, ,
	\end{equation}
	and
	\begin{equation}
		\sum_i t^{\infty}_i\Tr(R_ie_{jj}) = \sum_i\sum_{k,l} t^{\infty}_i(R_i)_{kl}(e_{jj})_{lk} = \sum_i\sum_{k,l} t^{\infty}_i(R_i)_{kl}\delta_{jk}\delta_{jl} = \sum_i t^{\infty}_i(R_i)_{jj} \, .
	\end{equation}

	The leading term of $H^{\JMMS,\infty}_i$ provides the Schlesinger Hamiltonian~\eqref{eq:schlesinger_hamiltonians}.
	Similarly the leading term of $H^{\JMMS,0}_j$ provides a generalisation of the dual Schlesinger Hamiltonian~\eqref{eq:dual_schlesinger_hamiltonians}. 
	This generalisation amounts to the fact that now one allows for several simple poles---instead of just one.  
\end{rem}

To relate~\eqref{eq:jmms_hamiltonians} with the Hamiltonian reduction of~\eqref{eq:slims_bipartite} we have to consider the special case of the simply-laced isomonodromy systems where $\dim(V^0_j) = 1$ for all $j \in I^0$, so that $|I^0| = \dim(W^0) = n$, and then $\mathbb{C}^{I^0} \setminus \{\diags\}$ is identified with the regular part $\mathfrak{t}_{\reg}$ of the standard Cartan subalgebra $\mathfrak{t} \subseteq \mathfrak{g} = \mathfrak{gl}(W^0)$. 
Choose now $i \in I^{\infty}$, and consider the product maps $\mu_i \colon T^*\Hom(V^{\infty}_i,W^0) \to \mathfrak{g}$ sending $(Q_i,P_i)$ to $Q_iP_i$, as done in \S~\ref{sec:reduced_star=schlesinger}. 
This is the moment map for the Hamiltonian $\GL(W^0)$-action on $T^*\Hom(V^{\infty}_i,W^0)$, and then we consider
\begin{equation}
	\mu \coloneqq \bigoplus_{i \in I^{\infty}} \mu_i \colon \mathbb{M} \longrightarrow \mathfrak{g}^m, \qquad \mu \colon (Q,P) \longmapsto (Q_iP_i)_{i \in I^{\infty}} \, .
\end{equation}
The pull-back $\mu^* \colon \Sym(\mathfrak{g}^*)^{\otimes m} \to \Sym(\mathbb{M}^*) = A_0$ is a Poisson map relating the JMMS Hamiltonians~\eqref{eq:jmms_hamiltonians} and the simply-laced Hamiltonians~\eqref{eq:slims_bipartite}.
If one keeps the above notation for the coordinate functions on $\mathfrak{g}^m$ and $T^*\Hom(V^{\infty}_i,W^0)$, then 
\begin{equation}
	\label{eq:global_classical_moment_bipartite} 
	\mu^*\big((R_i)_{jk}\big) = Q_{ji}P_{ik} , ,
\end{equation}
because the $(j,k)$-component of the residue $R_i$ is the composition $Q_{ji}P_{ik} \colon V^0_k \to V^0_j$.

\begin{prop}
	\label{prop:pull_back_classical_3}
	One has $\mu^*\big(H_i^{\JMMS, \infty}\big) = H^{\infty}_i$ and $\mu^*\big(H_j^{\JMMS,0}\big) = H^0_j$ for all indices $1 \leq i \leq m$, $1 \leq j \leq n$.
\end{prop}

\begin{proof}
	The full expansions of the linear term of $H^{\JMMS,\infty}_i$ transforms as
	\begin{equation}
		\sum_j \mu^*\big(t^0_j(R_i)_{jj}\big) = \sum_j t^0_jQ_{ji}P_{ij} = \sum_j t^0_j\Tr(P_{ij}Q_{ji}) \, ,
	\end{equation}
	where in the last passage one uses the fact that $Q_{ji}P_{ij} \colon V^0_j \to V^0_j$ is a complex number. 
	Similarly, the linear term of $H^{\JMMS,0}_j$ becomes 
	\begin{equation}
		\sum_i \mu^*\big(t^{\infty}_i(R_i)_{jj}\big) = \sum_i t^{\infty}_i Q_{ji}P_{ij} = \sum_i t^{\infty}_i \Tr(Q_{ji}P_{ij}) \, .
	\end{equation}

	Next one computes
	\begin{equation}
		\begin{split}
			\sum_{i \neq k} \mu^*\big(\Tr(R_iR_k)\big) &= \sum_{i \neq k} \sum_{j,l} \big(\mu^*_i(R_i)_{jl}\big)\big(\mu^*_k(R_k)_{lj}\big) = \sum_{i \neq k} \sum_{j,l} Q_{ji}P_{il}Q_{lk}P_{kj} \\
			&= \sum_{i \neq k} \sum_{j,l} \Tr(P_{il}Q_{lk}P_{kj}Q_{ji}) \, ,
		\end{split}
	\end{equation}
	and 
	\begin{equation}
		\begin{split}
			\sum_{k \neq j} \sum_{i,p} \mu^*\big((R_i)_{jk}(R_p)_{kj}\big) &= \sum_{k \neq j} \sum_{i,p}  \Big(\mu^*(R_i)_{jk}\Big)\Big( \mu^*(R_p)_{kj}\Big) = \sum_{k \neq j} \sum_{i,p} Q_{ji}P_{ik}Q_{kp}P_{pj} \\
			&= \sum_{k \neq j} \sum_{i,p} \Tr(Q_{ji}P_{ik}Q_{kp}P_{pj}) \, .
		\end{split}
	\end{equation}

	Changing indices one recovers the formul\ae{}~\eqref{eq:slims_bipartite}. 
\end{proof}

The same construction of \S~\ref{sec:reduced_star=schlesinger} and~\ref{sec:reduced_dual_star=dual_Schlesinger} shows that the Hamiltonian reduction of the simply-laced Hamiltonians~\eqref{eq:slims_bipartite} with respect to comoment map for the action of the Lie algebra $\mathfrak{g}^{\infty} \coloneqq \bigoplus_{i \in I^{\infty}} \mathfrak{gl}(V^{\infty}_i)$ on $A_0$ corresponds to the JMMS Hamiltonians~\eqref{eq:jmms_hamiltonians}---at any ideal $\mathfrak{I} \subseteq \Sym(\mathfrak{g}^{\infty})$. 
Denoting $R(A_0,\mathfrak{g}^{\infty},\mathfrak{I})$ the Hamiltonian reduction, and $\pi_{\mathfrak{I}} \colon A_0^{\mathfrak{g}^{\infty}} \to R(A_0,\mathfrak{g}^{\infty},\mathfrak{I})$ the canonical projection, then the elements $\pi_{\mathfrak{I}}(H^{\infty}_i)$ and $\pi_{\mathfrak{I}}(H^0_j)$ are the Hamiltonian reductions of~\eqref{eq:slims_bipartite} for $(i,j) \in I^{\infty} \times I^0$.

In order to relate this with the JMMS Hamiltonians we use a canonical identification $\mathfrak{g}^{\infty} \simeq \mathfrak{g}^m$, which we obtain by further restricting the simply-laced isomonodromy system to the case where $V^{\infty}_i = W^0$ for all $i \in I^{\infty}$. 
Hence on the whole we attach $n$ one-dimensional vector spaces $\{V^0_j\}_{j \in I^0}$ to the nodes inside the part $I^0$, and then we attach one and the same $n$-dimensional vector space $W^0 = \bigoplus_{j \in I^0} V^0_j$ to the $m$ nodes inside the part $I^{\infty}$.
Then there is a canonical isomorphism $\Sym(\mathfrak{g}^*)^{\otimes m} \simeq \Sym(\mathfrak{g}^{\infty})$, and if the Hamiltonian reduction is taken at the ideal $\mathfrak{I} \subseteq \Sym(\mathfrak{g}^{\infty})$ then the ideal $\mathfrak{I}' \subseteq \Sym(\mathfrak{g}^*)^{\otimes m}$ corresponds to it under the isomorphism.
Geometrically, the choice of $\mathfrak{I}$ is the choice of coadjoint $G^{\infty}$-orbit $\mathcal{O} \subseteq (\mathfrak{g}^{\infty})^* \simeq \mathfrak{g}^{\infty}$, where $G^{\infty} \coloneqq \prod_{i \in I^{\infty}} \GL(V^{\infty}_i) \subseteq \GL(W^{\infty})$; similarly, $\mathfrak{I}'$ corresponds to an $m$-tuple of adjoint $\GL(W^0)$-orbits $\mathcal{O}' \subseteq \mathfrak{g}^m$. 

There is now a natural injective morphism 
\begin{equation}
	\varphi \colon \Sym(\mathfrak{g}^*)^{\otimes m} \big\slash \mathfrak{I}' \longrightarrow R(A_0,\mathfrak{g}^{\infty},\mathfrak{I}) \, ,
\end{equation}
induced by the composition of the projection $\pi_{\mathfrak{I}}$ after the map $\mu^*$~\eqref{eq:global_classical_moment_bipartite}. Finally Proposition~\ref{prop:pull_back_classical_3} yields the following.

\begin{thm}
	The classes of the JMMS Hamiltonians~\eqref{eq:jmms_hamiltonians} inside $\Sym(\mathfrak{g}^*)^{\otimes m} \big\slash \mathfrak{I}'$ match up with $\pi_{\mathfrak{I}}(H^{\infty}_i), \pi_{\mathfrak{I}}(H^0_j) \in R(A_0,\mathfrak{g}^{\infty},\mathfrak{I})$ under the natural correspondence $\varphi$.
\end{thm}

Hence indeed the JMMS system corresponds to the Hamiltonian reduction of this particular case of the simply-laced isomonodromy system, for every choice of ideal/coadjoint orbit. 

\begin{rem}
	As explained in \S~\ref{sec:reduced_star=schlesinger} and~\ref{sec:reduced_dual_star=dual_Schlesinger}, one can then take the quotient for the residual action of the group acting at the nodes $I^0$, i.e. the complex torus $\prod_{j \in I^0} \GL(V^0_j) \simeq (\mathbb{C}^*)^n$---the maximal torus of $\GL(W^0)$---and the quotient is the moduli space $\mathcal{M}^*_{\dR}$ of isomorphism classes of meromorphic connections~\eqref{eq:fmtv_meromorphic_connection} on a trivial holomorphic vector bundle.

	Then the Harnad duality~\cite{harnad_1994_dual_isomonodromic_deformations} acts on on the symplectic fibration $\widetilde{\mathcal{M}}^*_{\dR} \to \mathbf{B}$ of moduli spaces as an involution that swaps over the regular and irregular times.
\end{rem}

\subsection{FMTV is a quantisation of JMMS}
\label{sec:quantum_JMMS=FMTV}

In this section we give explicit formul\ae{} for the quantum Hamiltonians of the system of Felder--Markov--Tarasov--Varchenko~\cite{felder_markov_tarasov_varchenko_2000_dynamical_connection} (FMTV), when $\mathfrak{g} = \mathfrak{gl}_n(\mathbb{C})$. 
Then we show that the FMTV Hamiltonians are semiclassically equivalent to the PBW-quantisation of the JMMS Hamiltonians~\eqref{eq:jmms_hamiltonians}. 

Let $m \geq 1$ be an integer, $\mathfrak{g}$ a simple Lie algebra and $\mathfrak{t} \subseteq \mathfrak{g}$ a Cartan subalgebra. 
Then we generalise the KZ equations for $\mathfrak{g}$ adding on an additional parameter $\mu \in \mathfrak{t}_{\reg}$. 
Define the element $\Omega^{(ij)} \in U(\mathfrak{g})^{\otimes m}$ as in \S~\ref{sec:KZ}. Similarly, let $\widehat{\mu}^{(i)} \in U(\mathfrak{g})^{\otimes m}$ be the embedding of $\widehat{\mu} = \iota_U(\mu)$ on the $i$th factor, where $\iota_U \colon \mathfrak{g} \hookrightarrow U(\mathfrak{g})$ is the universal inclusion. 
Then we construct the universal KZ bundle with fibre $U(\mathfrak{g})^{\otimes m}$, defined over the base space $\mathbf{B} = \mathfrak{t}_{\reg} \times \Conf_m(\mathbb{C})$, and we consider the following system of linear partial differential equations for a local section $\psi$: 
\begin{equation}
	\label{eq:FMTV_I}
	\frac{\partial \psi}{\partial t^{\infty}_i} = \left(\widehat{\mu}^{(i)} + \sum_{1 \leq j \neq i \leq m} \frac{\Omega^{(ij)}}{t^{\infty}_i - t^{\infty}_j}\right)\psi \, ,
\end{equation}
where $\widehat{\mu}^{(i)}$ and $\Omega^{(ij)}$ act by left multiplication. 
This is Equation~3 of~\cite{felder_markov_tarasov_varchenko_2000_dynamical_connection}.\footnote{We replaced $u$ and $z_i$ by $\psi$ and $t^{\infty}_i$, respectively. 
Moreover, we consider the universal KZ equations instead of choosing finite-dimensional simple $\mathfrak{g}$-modules $V_i$, and thus we replace $V = V_1 \otimes \dotsm \otimes V_m$ with $U(\mathfrak{g})^{\otimes m}$. 
Finally, the choice of a complex parameter $\kappa$ is not relevant here as the connections obtained by changing it will still be strongly flat.}

Then Equation~4 on the same page provides a system of differential equations which is compatible with~\eqref{eq:FMTV_I}: the \emph{dynamical equations}.
This is a system of  differential equations for $\psi$ with respect to the variable $\mu \in \mathfrak{t}_{\reg}$. 
To write them, let $\mathcal{R} = \mathcal{R}(\mathfrak{g},\mathfrak{t})$ be the root system of the pair $(\mathfrak{g},\mathfrak{t})$, and consider the PBW-quantisations $\widehat{e}_{\alpha}, \widehat{f}_{\alpha} \in U(\mathfrak{g})$ of the vectors $e_{\alpha} \in \mathfrak{g}_{\alpha}$, $f_{\alpha} \in \mathfrak{g}_{-{\alpha}}$---which compose an $\mathfrak{sl}_2$-triplet together with $h_{\alpha} = [e_{\alpha},f_{\alpha}] \in \mathfrak{t}$. 
Finally, make a choice of positive roots $\mathcal{R}_+ \subseteq \mathcal{R}$. 
Then we impose that the derivative of $\psi$ in the direction of $\mu' \in \mathfrak{t}$ be given by:
\begin{equation}
	\label{eq:FMTV_II}
	\frac{\partial \psi}{\partial \mu'} = \left(\sum_{1 \leq i \leq m} t^{\infty}_i (\widehat{\mu}')^{(i)} + \sum_{\alpha \in \mathcal{R}_+} \frac{\langle \alpha, \mu'\rangle}{\langle\alpha,\mu\rangle} \widehat{e}_{\alpha} \cdot \widehat{f}_{\alpha}\right)\psi \, ,
\end{equation}
where $\langle \cdot, \cdot \rangle \colon \mathfrak{t}^* \otimes \mathfrak{t} \to \mathbb{C}$ is the dual pairing, $(\widehat{\mu}')^{(i)} \in U(\mathfrak{g})^{\otimes m}$ acts via left multiplication and $\widehat{e}_{\alpha} \cdot \widehat{f}_{\alpha}$ denotes the left multiplication of $\sum_{i,j} \widehat{e}^{(i)}_{\alpha} \cdot \widehat{f}^{(j)}_{\alpha}\in U(\mathfrak{g})^{\otimes m}$. The full FMTV system consists of the two sets of equations~\eqref{eq:FMTV_I} and~\eqref{eq:FMTV_II}. 

One can next encode these linear differential equations into a connection $\widehat{\nabla}^{\FMTV}$ defined on the trivial bundle $U(\mathfrak{g})^{\otimes m} \times \mathbf{B} \to \mathbf{B}$, the \emph{FMTV connection}:
\begin{equation}
	\label{eq:FMTV_connection}
	\widehat{\nabla}^{\FMTV} = d - \sum_{1 \leq i \leq m} \widehat{H}_i^{\FMTV,\infty}dt^{\infty}_i - \sum_{i \leq j \leq n} \widehat{H}_j^{\FMTV,0} dt^0_j \, ,
\end{equation}
where $n \coloneqq \dim(\mathfrak{t})$ is the rank of $\mathfrak{g}$ and $\{t^0_j\}_j$ is the coordinate system on the open subset $\mathfrak{t}_{\reg} \subseteq \mathfrak{t}$ induced from global coordinates on $\mathfrak{t} \simeq \mathbb{C}^n$. 
The time-dependent quantum operators $\widehat{H}_i^{\FMTV,\infty}$, $\widehat{H}_j^{\FMTV,0}$ are by definition the \emph{FMTV Hamiltonians}, for which we give explicit formul\ae{} in the case where $\mathfrak{g} \coloneqq \mathfrak{gl}_n(\mathbb{C})$, and $\mathfrak{t}$ is the standard Cartan subalgebra of diagonal matrices. 

\begin{prop}
	The FMTV Hamiltonians for $\mathfrak{g} = \mathfrak{gl}_n(\mathbb{C})$ read:
	\begin{equation}
		\begin{split}
			\label{eq:fmtv_hamiltonians}
			\widehat{H}_i^{\FMTV,\infty} &= \sum_{i \neq k}\sum_{j,l} \frac{\widehat{e}^{(i)}_{jl} \cdot \widehat{e}^{(k)}_{lj}}{t^{\infty}_i - t^{\infty}_k} + \sum_j t^0_j \widehat{e}^{(i)}_{jj}, \\
			\widehat{H}_j^{\FMTV,0} &= \sum_{k \neq j}\sum_{i,p} \frac{\widehat{e}^{(i)}_{jk} \cdot \widehat{e}^{(p)}_{kj}}{t^0_j - t^0_k} + \sum_i t^{\infty}_i \widehat{e}^{(i)}_{jj} \, ,
		\end{split}
	\end{equation}
	for all $i \in \{1,\dotsc,m\}$ and $j \in \{1,\dotsc,n\}$, where $(e_{ij})_{ij}$ is the canonical basis of $\mathfrak{g}$.
\end{prop}

\begin{proof}
	The root system is $\mathcal{R}(\mathfrak{g},\mathfrak{t}) = \{\alpha_{ij}\}_{i \neq j}$, with $\alpha_{ij}(\diag(t_1,\dotsc,t_n)) \coloneqq t_i - t_j$. 
	A root is positive if $i > j$, and the element $\mu \in \mathfrak{t}_{\reg}$ corresponds to the matrix $T^0 = \diag(t^0_1, \dotsc, t^0_n) = \sum_j t^0_je_{jj}$. 

	The Cartan term of~\eqref{eq:FMTV_I} then becomes $\widehat{\mu}^{(i)} = \sum_j t^0_j\widehat{e}^{(i)}_{jj}$, and we know that the KZ term expands to~\eqref{eq:kz_hamiltonians}. 

	To differentiate along a generic direction $\mu' \in \mathfrak{t}$ it is enough to consider partial derivatives with respect to the system of fundamental coweights $(e_{jj})_j$, which is a basis of $\mathfrak{t}$. 
	If $j \in \{1, \dotsc, n\}$ is fixed then we let $\mu' = e_{jj}$, and the Cartan term of~\eqref{eq:FMTV_II} reads
	\begin{equation}
		\sum_{1 \leq i \leq m} t^{\infty}_i (\widehat{\mu}')^{(i)} = \sum_{1 \leq i \leq m} t^{\infty}_i \widehat{e}^{(i)}_{jj} \, .
	\end{equation}
	Finally, the rightmost term of~\eqref{eq:FMTV_II} expands as follows:
	\begin{equation}
		\begin{split}
			\sum_{\alpha > 0} &\frac{\langle \alpha,\mu' \rangle}{\langle \alpha, \mu\rangle}\widehat{e}_{\alpha} \cdot \widehat{f}_{\alpha} = \sum_{k > l}\sum_{1 \leq i,p \leq m} \frac{\langle \alpha_{kl}, e_{jj}\rangle}{\langle \alpha_{kl},T^0\rangle} \widehat{e}^{(i)}_{kl} \cdot \widehat{e}^{(p)}_{lk} \\[10 pt]
			&= \sum_{j > l}\sum_{i,p}  \frac{\langle \alpha_{jl},e_{jj}\rangle}{\langle \alpha_{jl},T^0\rangle} \widehat{e}^{(i)}_{jl} \cdot \widehat{e}^{(p)}_{lj} + \sum_{i,p} \sum_{k > j} \frac{\langle \alpha_{kj},e_{jj}\rangle}{\langle \alpha_{kj},T^0\rangle} \widehat{e}^{(i)}_{kj} \cdot \widehat{e}^{(p)}_{jk} \\[10 pt]
			&= \sum_{k \neq j}\sum_{i,p}  \frac{\widehat{e}^{(i)}_{jk} \cdot \widehat{e}^{(p)}_{kj}}{t^0_j - t^0_k} \, ,
		\end{split}
	\end{equation}
	where we used $\langle \alpha_{kl},e_{jj}\rangle = \delta_{kj} - \delta_{lj}$ and $\langle \alpha_{kl},T^0\rangle = t^0_k - t^0_l$.
\end{proof}

The next proposition compares the FMTV Hamiltonians with the PBW-quantisation of the JMMS system~\eqref{eq:jmms_hamiltonians}, as time-dependent quantum operators $\mathbf{B} \to U(\mathfrak{g})^{\otimes m}$.

\begin{thm}
	\label{thm:quantum_JMMS=FMTV}
	Take $\mathfrak{g} = \mathfrak{gl}_n(\mathbb{C})$ in the FMTV system, and fix indices $1 \leq i \le m$, $1 \leq j \leq n$. 
	Then:
	\begin{enumerate}
		\item One has $\widehat{H}_i^{\FMTV,\infty} = \mathcal{Q}_{\PBW}\big(H_i^{\JMMS,\infty}\big)$ pointwise on $\mathbf{B}$.

		\item The time-dependent quantum operator
		\begin{equation}
			\widehat{H}_j^{\FMTV,0} - \mathcal{Q}_{\PBW}\big(H_j^{\JMMS,0}\big) \colon \mathbf{B} \longrightarrow U(\mathfrak{g})^{\otimes m}
		\end{equation}
		vanishes in the semiclassical limit.
	\end{enumerate}
	Hence the FMTV connection is a quantisation of the JMMS system.
\end{thm}

\begin{proof}
	As for the linear terms of~\eqref{eq:jmms_hamiltonians}, one has $\mathcal{Q}_{\PBW}\big(t^0_je^{(i)}_{jj}\big) = t^0_j\widehat{e}^{(i)}_{jj}$ and $\mathcal{Q}_{\PBW}\big(t^{\infty}_ie^{(i)}_{jj}\big) = t^{\infty}_i\widehat{e}^{(i)}_{jj}$, since the PBW-quantisation reduces to the universal inclusion on elements of degree one, and the result follows by linearity. 
	Next, it has been shown in \S~\ref{sec:quantum_Schlesinger=KZ} that the PBW-quantisation of the Schlesinger Hamiltonians~\eqref{eq:schlesinger_hamiltonians} yields the KZ Hamiltonians~\eqref{eq:kz_hamiltonians}, whence on the whole one has the first identity in the statement.

	For the PBW-quantisation of the leading term of $H^{\JMMS,0}_j$ instead one computes:
	\begin{equation}
		\begin{split}
			\mathcal{Q}_{\PBW}\Bigg(&\sum_{i,p}\sum_{j \neq k} \frac{(R_i)_{jk}(R_p)_{kj}}{t^0_j - t^0_k}\Bigg) = \sum_{i,p}\sum_{j \neq k} \frac{1}{2(t^0_j - t^0_k)}\big(\widehat{e}^{(i)}_{jk} \cdot \widehat{e}^{(p)}_{kj} + \widehat{e}^{(p)}_{kj} \cdot \widehat{e}^{(i)}_{jk}\big) \\
			&= \sum_{i,p}\sum_{j \neq k} \frac{\widehat{e}^{(i)}_{jk} \cdot \widehat{e}^{(p)}_{kj}}{t^0_j - t^0_k} + \sum_i \sum_{j \neq k} \frac{\widehat{e}^{(i)}_{kk} - \widehat{e}^{(i)}_{jj}}{2(t^0_j - t^0_k)} \, ,
		\end{split}
	\end{equation}
	using the identity $\bigl[\widehat{e}^{(i)}_{kj},\widehat{e}^{(p)}_{jk}\bigr] = \delta_{ip}\big(\widehat{e}^{(i)}_{kk} - \widehat{e}^{(i)}_{jj}\big)$ inside $U(\mathfrak{g})^{\otimes m}$. 

	Hence $\widehat{H}_j^{\FMTV,0}$ and $\mathcal{Q}_{\PBW}\big(H_j^{\JMMS,0}\big)$ have the same leading term, which implies that they have the same semiclassical limit once one passes to deformation quantisation as explained in \S~\ref{sec:quantisation_algebras}.
\end{proof}

\begin{rem}
	One can now make sense of the statement that the DMT connection~\eqref{eq:dmt_hamiltonians} is essentially a particular case of the FMTV connection. 
	Indeed, by taking $m = 1$ the FMTV Hamiltonian $\widehat{H}_j^{\FMTV,0}$ becomes 
	\begin{equation}
		\widehat{H}_j^{\FMTV,0} = \sum_{k \neq j} \frac{\widehat{e}_{jk} \cdot \widehat{e}_{kj}}{t^0_j - t^0_k} \, ,
	\end{equation}
	for $1 \leq j \leq n$. The analogous Hamiltonian of the DMT system~\eqref{eq:dmt_hamiltonians} instead is
	\begin{equation}
		\begin{split}
			\widehat{H}_j^{\DMT} &= \sum_{k \neq j} \frac{\widehat{e}_{jk} \cdot \widehat{e}_{kj} + \widehat{e}_{kj} \cdot \widehat{e}_{jk}}{2(t^0_j - t^0_k)} = \sum_{k \neq j} \frac{\widehat{e}_{jk} \cdot \widehat{e}_{kj}}{t^0_j - t^0_k} + \sum_{k \neq j} \frac{\bigl[\widehat{e}_{kj},\widehat{e}_{jk}\bigr]}{2(t^0_j - t^0_k)} \\
			&= \widehat{H}_j^{\FMTV,0} + \sum_{k \neq j} \frac{\widehat{e}_{kk} - \widehat{e}_{jj}}{2(t^0_j - t^0_k)} \, .
		\end{split}
	\end{equation}
	The lower-order correction is the exact analogous of the difference between the simply-laced quantum connection and the DMT connection, as discussed in \S~\ref{sec:corrected_quantum_reduced_dual_star=DMT}. 
\end{rem}

\subsection{Quantum Hamiltonian reduction}
\label{sec:corrected_quantum_reduced_bipartite=FMTV}

In this section we show that the quantum Hamiltonian reduction of the simply-laced quantum Hamiltonians of a generic complete bipartite graph is semiclassically equivalent to the FMTV Hamiltonians~\eqref{eq:fmtv_hamiltonians}.

Analogously to \S\S~\ref{sec:reduced_quantumstar=KZ} and~\ref{sec:corrected_quantum_reduced_dual_star=DMT}, the ``quantum'' change of variable $(\widehat{R}_i)_{jk} = \widehat{Q}_{ji}\widehat{P}_{ik}$ has the algebraic meaning of applying a quantum comoment map $\widehat{\mu}^* \colon U(\mathfrak{g})^{\otimes m} \to A = W(\mathbb{M}^*,\{\cdot,\cdot\})$ to the FMTV Hamiltonians~\eqref{eq:fmtv_hamiltonians}. 
The quantum comoment is defined by
\begin{equation}
	\widehat{\mu}^*\big((\widehat{R}_i)_{jk}\big) = \widehat{Q}_{ji} \cdot \widehat{P}_{ik} \, ,
\end{equation}
which yields
\begin{align*}
	\widehat{\mu}^*\big(\widehat{H}_i^{\FMTV,\infty}\big) &= \sum_{i \neq k,j,l} \frac{\Tr(\widehat{Q}_{ji}\widehat{P}_{il}\widehat{Q}_{lk}\widehat{P}_{kj})}{t^{\infty}_i - t^{\infty}_k} + \sum_j t^0_j \Tr(\widehat{Q}_{ji}\widehat{P}_{ij}), \\
	\widehat{\mu}^*\big(\widehat{H}_i^{\FMTV,0}\big) &= \sum_{k \neq j,i,l} \frac{\Tr(\widehat{Q}_{ji}\widehat{P}_{ik}\widehat{Q}_{kl}\widehat{P}_{lj})}{t^0_j - t^0_k} + \sum_i t^{\infty}_i \Tr(\widehat{Q}_{ji}\widehat{P}_{ij}) \, .
\end{align*}
These operators are obtained from the simply-laced quantum Hamiltonians $\rho_1(\widehat{H}^{\infty}_i)$, $\rho_1(\widehat{H}^0_j)$ by changing anchors, since they are by definition traces of quantum cycles anchored at nodes in the part $I^0 \subseteq I$. 
Hence there exist quantum potentials $\widehat{W}^{'\infty}_i$ and $\widehat{W}^{'0}_j$ for $(i,j) \in I^{\infty} \times I^0$ such that 
\begin{equation}
	\widehat{\mu}^*\big(\widehat{H}_i^{\FMTV,\infty}\big) = \Tr\big(\widehat{W}^{'\infty}_i\big), \qquad \text{and} \qquad \widehat{\mu}^*\big(\widehat{H}_i^{\FMTV,0}\big) = \Tr\big(\widehat{W}^{'0}_j\big) \, .
\end{equation}

This quantum system has the same semiclassical limit of the simply-laced quantum connection. 
To state this we mimic the argument of \S~\ref{sec:corrected_quantum_reduced_dual_star=DMT}: consider the Rees algebra $\Rees(A) \subseteq A[\hslash]$ of $A$---of Definition~\ref{def:rees_algebra}---and the topologically free $\mathbb{C} \llbracket \hslash \rrbracket$-algebra $\widehat{A} \subseteq A \llbracket \hslash \rrbracket$---of Proposition~\ref{prop:rees}.
Then we compare the universal simply-laced connection~\eqref{eq:slqc_bipartite} with the connection
\begin{equation}
	\widehat{\nabla}' \coloneqq d - \sum_{i \in I^{\infty}} \widehat{H}_i^{'\infty} dt_i - \sum_{j \in I^0} \widehat{H}_j^{'0} dt_j \, ,
\end{equation}
where $\widehat{H}_i^{'\infty} \coloneqq \Tr_{\hslash}\big(\widehat{W}^{'\infty}_i\big)$ and $\widehat{H}_j^{'0} \coloneqq \Tr_{\hslash}\big(\widehat{W}^{'0}_j\big)$. 
Both these connections are defined on the trivial bundle $\widehat{A} \times \mathbf{B} \to \mathbf{B}$, and we inspect their difference.

\begin{thm}
	The $A$-valued one-form $\widehat{\nabla} - \widehat{\nabla}'$ on $\mathbf{B}$ vanishes in the semiclassical limit.
\end{thm}

\begin{proof}
	The proof is analogous to that of Theorem~\ref{prop:quantum_correction}. One can compute explicitly 
	\begin{equation}
		\langle \widehat{\nabla} - \widehat{\nabla}',\partial_{t^{\infty}_i}\rangle = \widehat{H}^{'\infty}_i - \widehat{H}^{\infty}_i
	\end{equation}
	and 
	\begin{equation}
		\langle \widehat{\nabla} - \widehat{\nabla}',\partial_{t^0_j}\rangle = \widehat{H}^{'0}_j - \widehat{H}^0_j
	\end{equation}
	for all $i,j$. 
	This yields polynomials in $\hslash$ in which the coefficient of $\hslash^4$ (resp. $\hslash^2$) has order strictly smaller then four (resp. two); thus by definition these elements live in the kernel of the semiclassical limit~\eqref{eq:semiclassical_limit}.
\end{proof}

Hence indeed one can add a semiclassically vanishing term to this particular case of the simply-laced quantum connection, so that the quantum Hamiltonian reduction equals the FMTV connection---at any choice of ideal.

\section*{Conclusion/Outlook}

Hence in brief we attached a flat linear connection to every choice of a complete $k$-partite graph plus some decoration. 
Moreover, we explicitly computed the particular case corresponding to a complete bipartite graph, and related it to the quantum system of~\cite{felder_markov_tarasov_varchenko_2000_dynamical_connection}. 
This subsumes the connections of Knizhnik--Zamolodchikov~\cite{knizhnik_zamolodchikov_1984_wess_zumino_witten} and De Concini--Millson--Toledano~Laredo~\cite{millson_toledano_laredo_2005_casimir_connection}---both corresponding to a star-shaped graph.

Of course there is now an infinite family of examples of flat connections beyond them, which arise for $k \geq 3$. 

For instance taking a triangle---the complete graph on $k = 3$ nodes---yields a flat connection quantising the isomonodromic deformations of meromorphic connections on the sphere with a pole of order three at infinity.
In~\cite{nagoya_sun_2011_confluent_kz_equations_with_poincare_rank_2_at_infinity} such a quantisation was constructed in the case where the leading coefficient has simple spectrum, using confluent Verma modules. 
The formula for the Hamiltonian $\mathcal{H}^{(1)}_1$ in Example~3.3 of op. cit. is given in the absence of simple poles, and consists of a linear combination of operators of order two and three: in the viewpoint of this article this corresponds to a linear combination of the traces of all quantum 2-cycles and 3-cycles at a node of the triangle, which indicates that our perspective is compatible with~\cite{nagoya_sun_2011_confluent_kz_equations_with_poincare_rank_2_at_infinity}, with the addition of a proof of flatness.

Moreover, the setup of~\cite{nagoya_sun_2011_confluent_kz_equations_with_poincare_rank_2_at_infinity} allows for adding simple poles in the complex plane, which in the language of this paper means splaying one node of a triangle equipped with a degenerate reading. 
However no other node can be splayed, since they are all one-dimensional; thus for example the simply-laced quantum connection of a triangle with two nodes splayed is beyond the scope of~\cite{nagoya_sun_2011_confluent_kz_equations_with_poincare_rank_2_at_infinity}, so looks to be new.

Finally, note that the isomonodromic deformations of arbitrary meromorphic connections on Riemann surfaces are now known to be sections of flat symplectic Ehresmann connections (see~\cite{boalch_2001_symplectic_manifolds_and_isomonodromic_deformations, boalch_2011_geometry_and_braiding_of_stokes_data, boalch_yamakawa_2015_twisted_wild_character_varieties}), so there are many more flat linear connections, beyond the simply-laced case of this article, that may be obtained by quantising them (cf. Remark~\ref{rem:quantisation_local_systems}). 
In the general case the space of times will be the space of admissible deformations of \emph{wild Riemann surface structures} in the sense of~\cite{boalch_2011_geometry_and_braiding_of_stokes_data, boalch_yamakawa_2015_twisted_wild_character_varieties}.

\section*{Acknowledgements} 

I express my gratitude to P. Boalch for suggesting this problem to me. 

After posting this paper on the arXiv, D. Yamakawa informed me that he had established an unpublished result similar to Theorem~\ref{thm:quantum_flatness}, using a different construction that was not related to the KZ connection~\cite{yamakawa_2015_talk}. I thank him for sharing his notes with me.

	\appendix

	\section{Coordinate computations}
\label{sec:appendix}

Throughout this appendix we make a choice of global Darboux coordinates on $(\mathbb{M},\omega_a)$ so that 
\begin{equation}
	\big[\widehat{X}^{\alpha}_{ij},\widehat{X}^{\alpha^*}_{kl}\big] = \{X^{\alpha}_{ij},X^{\alpha^*}_{kl}\} = \varepsilon_{\alpha\alpha^*} \delta_{il}\delta_{jk} \in \{-1,0,1\} \, ,
\end{equation}
where $\alpha$ is an arrow of $\mathcal{G}$ with opposite arrow $\alpha^*$, and $\varepsilon_{\alpha\alpha^*} = -\varepsilon_{\alpha^*\alpha} \in \{\pm 1\}$ is a sign depending on the orientation of $\mathcal{G}$: we set $\varepsilon_{\alpha\alpha^*} = 1$ if $\alpha$ has positive orientation. 
All other quantum commutators and classical Poisson brackets vanish.

\begin{proof}[Proof of Lemma~\ref{lem:quantum_commutators}]
	Write $\widehat{C}_1 = \alpha_n \dotsm \underline{\alpha_1}$ and $\widehat{C}_2 = \beta_m \dotsm \underline{\beta_1}$, where $\alpha_i,\beta_j$ are arrows of $\mathcal{G}$. 
	One wants to expand
	\begin{equation}
		\big[\Tr(\widehat{C}_1),\Tr(\widehat{C}_2)\big] = \sum_{k,l} \big[\widehat{X}^{\alpha_n}_{k_nk_{n-1}} \dotsm \widehat{X}^{\alpha_1}_{k_1k_n},\widehat{X}^{\beta_m}_{l_ml_{m-1}} \dotsm \widehat{X}^{\beta_1}_{l_1l_m}\big] \, ,
	\end{equation}
	where $k = (k_1, \dotsc, k_n)$ and $l = (l_1,\dotsc,l_m)$ are suitable multi-indices. 
	Then applying the Leibniz rule recursively yields two alternative expansions:
	\begin{equation}
		\begin{split}
			\big[\Tr(\widehat{C}_1),&\Tr(\widehat{C}_2)\big] = \sum_{k,l}\sum_{i,j : \beta_j = \alpha_i^*} \big[\widehat{X}^{\alpha_i}_{k_ik_{i-1}},\widehat{X}^{\beta_j}_{l_jl_{j-1}}\big] \widehat{X}^{\beta_m}_{l_ml_{m-1}} \dotsm \widehat{X}^{\beta_{j+1}}_{l_{j+1}l_j} \\
			&\cdot \widehat{X}^{\alpha_n}_{k_nk_{n-1}} \dotsm \widehat{X}^{\alpha_{i+1}}_{k_{i+1}k_i} \cdot \widehat{X}^{\alpha_{i-1}}_{k_{i-1}k_{i-2}} \dotsm \widehat{X}^{\alpha_1}_{k_1k_n} \cdot \widehat{X}^{\beta_{j-1}}_{l_{j-1}l_{j-2}} \dotsm \widehat{X}^{\beta_1}_{l_1l_m} \, ,
		\end{split}
	\end{equation}
	and
	\begin{equation}
		\begin{split}
			\big[\Tr(\widehat{C}_1),&\Tr(\widehat{C}_2)\big] = \sum_{k,l}\sum_{i,j : \beta_j = \alpha_i^*} \big[\widehat{X}^{\alpha_i}_{k_ik_{i-1}},\widehat{X}^{\beta_j}_{l_jl_{j-1}}\big] \widehat{X}^{\alpha_n}_{k_mk_{n-1}} \dotsm \widehat{X}^{\alpha_{i+1}}_{k_{i+1}k_i} \\
			&\cdot \widehat{X}^{\beta_m}_{l_ml_{m-1}} \dotsm \widehat{X}^{\beta_{j+1}}_{l_{j+1}l_j} \cdot \widehat{X}^{\beta_{j-1}}_{l_{j-1}l_{j-2}} \dotsm \widehat{X}^{\beta_1}_{l_1l_m} \cdot \widehat{X}^{\alpha_{i-1}}_{k_{i-1}k_{i-2}} \dotsm \widehat{X}^{\alpha_1}_{k_1k_n} \, .
		\end{split}
	\end{equation}

	The result is a linear combination of words of length $n + m - 2$ in the alphabet $\widehat{X}^{\alpha}_{ij}$. 
	We want to write these words as traces of quantum cycles, and the condition given in the statement gives a consistent way to do it. 
	Indeed, say that $\widehat{C}_1$ satisfies the hypothesis, which means that it contains no pair of opposite arrows as soon as one of its arrows is removed. 
	Then one can permute
	\begin{equation}
		\widehat{X}^{\alpha_n}_{k_nk_{n-1}} \dotsm \widehat{X}^{\alpha_{i+1}}_{k_{i+1}k_i} \cdot \widehat{X}^{\alpha_{i-1}}_{k_{i-1}k_{i-2}} \dotsm \widehat{X}^{\alpha_1}_{k_1k_n} = \widehat{X}^{\alpha_{i-1}}_{k_{i-1}k_{i-2}} \dotsm \widehat{X}^{\alpha_1}_{k_1k_n} \cdot \widehat{X}^{\alpha_n}_{k_nk_{n-1}} \dotsm \widehat{X}^{\alpha_{i+1}}_{k_{i+1}k_i} \, ,
	\end{equation}
	and all summands of the first expansion get the desired form:
	\begin{equation}
		\begin{split}
			\big[\Tr(\widehat{C}_1),&\Tr(\widehat{C}_2)\big] = \sum_{k,l}\sum_{i,j : \beta_j = \alpha_i^*} \big[\widehat{X}^{\alpha_i}_{k_ik_{i-1}},\widehat{X}^{\beta_j}_{l_jl_{j-1}}\big] \widehat{X}^{\beta_m}_{l_ml_{m-1}} \dotsm \widehat{X}^{\beta_{j+1}}_{l_{j+1}l_j} \\
			&\cdot \widehat{X}^{\alpha_{i-1}}_{k_{i-1}k_{i-2}} \dotsm \widehat{X}^{\alpha_1}_{k_1k_n} \cdot \widehat{X}^{\alpha_n}_{k_nk_{n-1}} \dotsm \widehat{X}^{\alpha_{i+1}}_{k_{i+1}k_i} \cdot \widehat{X}^{\beta_{j-1}}_{l_{j-1}l_{j-2}} \dotsm \widehat{X}^{\beta_1}_{l_1l_m} = \\\\
			&= \sum_{i,j: \beta_j = \alpha_i^*} \varepsilon_{\alpha_i\alpha_i^*}\Tr\big(\widehat{X}^{\beta_m} \dotsm \widehat{X}^{\beta_{j+1}}\widehat{X}^{\alpha_{i-1}} \dotsm \widehat{X}^{\alpha_1}\widehat{X}^{\alpha_n} \dotsm \widehat{X}^{\alpha_{i+1}}\widehat{X}^{\beta_{j-1}} \dotsm \widehat{X}^{\beta_1}\big) \, ,
		\end{split}
	\end{equation}
	where we used $\big[\widehat{X}^{\alpha_i}_{k_ik_{i-1}},\widehat{X}^{\beta_j}_{l_jl_{j-1}}\big] = \big[\widehat{X}^{\alpha_i}_{k_ik_{i-1}},\widehat{X}^{\alpha_i^*}_{l_jl_{j-1}}\big] = \varepsilon_{\alpha_i\alpha_i^*}\delta_{k_il_{j-1}}\delta_{k_{i-1}l_j}$. 

	Hence setting 
	\begin{equation}
		\widehat{C} \coloneqq \sum_{i,j: \beta_j = \alpha_i^*} \varepsilon_{\alpha_i\alpha_i^*} \beta_m \dotsc \beta_{j+1} \alpha_{i-1} \dotsc \alpha_1 \alpha_n \dotsc \alpha_{i+1} \beta_{j-1} \dotsc \underline{\beta_1} 
	\end{equation}
	recovers the quantum potential in the statement of Proposition~\ref{prop:quantum_commutators}, since the expansion of the necklace Lie bracket of the semiclassical limits of $\widehat{C}_1$ and $\widehat{C}_2$ reads 
	\begin{equation}
		\begin{split}
			\big\{\Tr(C_1)&,\Tr(C_2)\big\}_{\mathcal{G}} = \\
			&= \sum_{i,j : \beta_j = \alpha_i^*} \varepsilon_{\alpha_i\alpha_i^*}\Tr\big(X^{\beta_m} \dotsm X^{\beta_{j+1}}X^{\alpha_{i-1}} \dotsm X^{\alpha_1}X^{\alpha_n} \dotsm X^{\alpha_{i+1}}X^{\beta_{j-1}} \dotsm X^{\beta_1}\big) \, ,
		\end{split}
	\end{equation}
	with the same signs $\varepsilon_{\alpha_i\alpha_i^*}$. 
	Then indeed $\widehat{C}$ is obtained by giving an anchor to all cycles of $\big\{\Tr(C_1),\Tr(C_2)\big\}_{\mathcal{G}}$.

	If $C_2$ is the only cycle satisfying the hypothesis then one may use the second expansion to conclude in the same way.
\end{proof}

\begin{proof}[Proof of Lemma~\ref{lem:vanishing_commutator_degenerate_4_cycles}]
	Looking at Figure~\ref{fig:vanishing_commutator_degenerate_4_cycles}, let $\alpha$ be the arrow from $a$ to $b$, $\beta$ the arrow from $b$ to $c$ and $\gamma$ the arrow from $c$ to $d$. 
	Then one has:
	\begin{equation}
		\begin{split}
			\Big[&\Tr\big(\widehat{X}^{\beta^*}\widehat{X}^{\beta}\widehat{X}^{\alpha^*}\widehat{X}^{\alpha}
			\big),\Tr\big(\widehat{X}^{\beta}\widehat{X}^{\beta^*}\widehat{X}^{\gamma^*}\widehat{X}^{\gamma}\big)\Big] \\
			&= \sum_{i,j,k,l,m,n,o,p} \Big[\widehat{X}^{\beta^*}_{ij}\widehat{X}^{\beta}_{jk}\widehat{X}^{\alpha^*}_{kl}\widehat{X}^{\alpha}_{li},\widehat{X}^{\beta}_{mn}\widehat{X}^{\beta^*}_{no}\widehat{X}^{\gamma^*}_{op}\widehat{X}^{\gamma}_{pm}\Big] \\
			&= \sum_{i,j,k,l,m,n,o,p} \Big[\widehat{X}^{\beta^*}_{ij}\widehat{X}^{\beta}_{jk},\widehat{X}^{\beta}_{mn}\widehat{X}^{\beta^*}_{no}\Big]\widehat{X}^{\alpha^*}_{kl}\widehat{X}^{\alpha}_{li}\widehat{X}^{\gamma^*}_{op}\widehat{X}^{\gamma}_{pm} \\	
			&= \sum_{i,j,k,l,m,n,o,p} \widehat{X}^{\beta^*}_{ij}\widehat{X}^{\beta}_{mn}\Big[\widehat{X}^{\beta}_{jk},\widehat{X}^{\beta^*}_{no}\Big]\widehat{X}^{\alpha^*}_{kl}\widehat{X}^{\alpha}_{li}\widehat{X}^{\gamma^*}_{op}\widehat{X}^{\gamma}_{pm} + \Big[\widehat{X}^{\beta^*}_{ij},\widehat{X}^{\beta}_{mn}\Big]\widehat{X}^{\beta^*}_{no}\widehat{X}^{\beta}_{jk}\widehat{X}^{\alpha^*}_{kl}\widehat{X}^{\alpha}_{li}\widehat{X}^{\gamma^*}_{op}\widehat{X}^{\gamma}_{pm} \\
			&= \sum_{i,j,k,l,m,n,o,p} \varepsilon_{\beta\beta^*}\delta_{jo}\delta_{kn}\widehat{X}^{\beta^*}_{ij}\widehat{X}^{\beta}_{mn}\widehat{X}^{\alpha^*}_{kl}\widehat{X}^{\alpha}_{li}\widehat{X}^{\gamma^*}_{op}\widehat{X}^{\gamma}_{pm} + \varepsilon_{\beta^*\beta}\delta_{jm}\delta_{in}\widehat{X}^{\beta^*}_{no}\widehat{X}^{\beta}_{jk}\widehat{X}^{\alpha^*}_{kl}\widehat{X}^{\alpha}_{li}\widehat{X}^{\gamma^*}_{op}\widehat{X}^{\gamma}_{pm} \\
			&= \varepsilon_{\beta\beta^*} \Big(\Tr\big(\widehat{X}^{\beta^*}\widehat{X}^{\gamma^*}\widehat{X}^{\gamma}\widehat{X}^{\beta}\widehat{X}^{\alpha^*}\widehat{X}^{\alpha}\big) - \Tr\big(\widehat{X}^{\beta^*}\widehat{X}^{\gamma^*}\widehat{X}^{\gamma}\widehat{X}^{\beta}\widehat{X}^{\alpha^*}\widehat{X}^{\alpha}\big)\Big) = 0 \, . \qedhere
		\end{split}
	\end{equation}
\end{proof}

\begin{proof}[Proof of Lemma~\ref{lem:commutator_degenerate_4_cycles}]
	Denote $\beta$ and $\beta^*$ the arrows that the two degenerate 4-cycles have in common, with $t(\beta) = j$. 
	Then denote $\alpha,\alpha^*$ (resp. $\gamma,\gamma^*$) the remaining arrows of the leftmost cycle (resp. of the rightmost cycle), with $t(\alpha) = j$ (resp. $t(\gamma) = j$). 
	This means that $\alpha$, $\beta$ and $\gamma$ come out of the common centre, whereas $\alpha^*$, $\beta^*$ and $\gamma^*$ point towards the common centre. 

	Then one has:
	\begin{equation}
		\begin{split}
			\Big[&\Tr\big(\widehat{X}^{\beta^*}\widehat{X}^{\beta}\widehat{X}^{\alpha^*}\widehat{X}^{\alpha}\big),\Tr\big(\widehat{X}^{\beta^*}\widehat{X}^{\beta}\widehat{X}^{\gamma^*}\widehat{X}^{\gamma}\big)\Big] \\
			&= \sum_{i,j,k,l,m,n,o,p} \big[\widehat{X}^{\beta^*}_{ij}\widehat{X}^{\beta}_{jk}\widehat{X}^{\alpha^*}_{kl}\widehat{X}^{\alpha}_{li},\widehat{X}^{\beta^*}_{mn}\widehat{X}^{\beta}_{no}\widehat{X}^{\gamma^*}_{op}\widehat{X}^{\gamma}_{pm}\big] \\
			&= \sum_{i,j,k,l,m,n,o,p} \big[\widehat{X}^{\beta^*}_{ij}\widehat{X}^{\beta}_{jk},\widehat{X}^{\beta^*}_{mn}\widehat{X}^{\beta}_{no}\big]\widehat{X}^{\alpha^*}_{kl}\widehat{X}^{\alpha}_{li}\widehat{X}^{\gamma^*}_{op}\widehat{X}^{\gamma}_{pm} \\
			&= \sum_{i,j,k,l,m,n,o,p} \widehat{X}^{\beta^*}_{ij}\big[\widehat{X}^{\beta}_{jk},\widehat{X}^{\beta^*}_{mn}\big]\widehat{X}^{\beta}_{no}\widehat{X}^{\alpha^*}_{kl}\widehat{X}^{\alpha}_{li}\widehat{X}^{\gamma^*}_{op}\widehat{X}^{\gamma}_{pm} + \widehat{X}^{\beta^*}_{mn}\big[\widehat{X}^{\beta^*}_{ij},\widehat{X}^{\beta}_{no}\big]\widehat{X}^{\beta}_{jk}\widehat{X}^{\alpha^*}_{kl}\widehat{X}^{\alpha}_{li}\widehat{X}^{\gamma^*}_{op}\widehat{X}^{\gamma}_{pm} \\
			&= \sum_{i,j,k,l,m,n,o,p} \varepsilon_{\beta\beta^*}\delta_{jn}\delta_{km} \widehat{X}^{\beta^*}_{ij}\widehat{X}^{\beta}_{no}\widehat{X}^{\alpha^*}_{kl}\widehat{X}^{\alpha}_{li}\widehat{X}^{\gamma^*}_{op}\widehat{X}^{\gamma}_{pm} + \varepsilon_{\beta^*\beta} \delta_{io}\delta_{jk}\widehat{X}^{\beta^*}_{mn}\widehat{X}^{\beta}_{jk}\widehat{X}^{\alpha^*}_{kl}\widehat{X}^{\alpha}_{li}\widehat{X}^{\gamma^*}_{op}\widehat{X}^{\gamma}_{pm} \\
			&= \varepsilon_{\beta\beta^*}\Big(\Tr\big(\widehat{X}^{\alpha^*}\widehat{X}^{\alpha}\widehat{X}^{\beta^*}\widehat{X}^{\beta}\widehat{X}^{\gamma^*}\widehat{X}^{\gamma}\big) -  \Tr\big(\widehat{X}^{\beta^*}\widehat{X}^{\beta}\widehat{X}^{\alpha^*}\widehat{X}^{\alpha}\widehat{X}^{\gamma^*}\widehat{X}^{\gamma}\big)\Big) \, .
			\end{split}
		\end{equation}
	Now one can orient $\mathcal{G}$ so that $\beta$ has positive orientation, whence $\varepsilon_{\beta\beta^*} = 1$, recovering the situation depicted in Figure~\ref{fig:commutator_degenerate_4_cycles}.
\end{proof}

	%Bibliography (BibTeX)
	\bibliographystyle{amsplain} 
		\bibliography{bibliography}

\providecommand{\bysame}{\leavevmode\hbox to3em{\hrulefill}\thinspace}
\providecommand{\MR}{\relax\ifhmode\unskip\space\fi MR }
% \MRhref is called by the amsart/book/proc definition of \MR.
\providecommand{\MRhref}[2]{%
  \href{http://www.ams.org/mathscinet-getitem?mr=#1}{#2}
}
\providecommand{\href}[2]{#2}
\begin{thebibliography}{10}

\bibitem{baumann_1999_q_weyl_group}
P.~Baumann, \emph{The q-{W}eyl group of a q-{S}chur algebra}, 1999,
  \href{https://hal.archives-ouvertes.fr/hal-00143359/fr/}{hal-00143359}.

\bibitem{beauville_laszlo_1994_conformal_blocks_and_generalised_theta_functions}
A.~Beauville and Y.~Laszlo, \emph{Conformal blocks and generalized theta
  functions}, Comm. Math. Phys. \textbf{164} (1994), no.~2, 385--419.
  \MR{1289330}

\bibitem{boalch_2001_symplectic_manifolds_and_isomonodromic_deformations}
P.~P. Boalch, \emph{Symplectic manifolds and isomonodromic deformations}, Adv.
  Math. \textbf{163} (2001), no.~2, 137--205. \MR{1864833}

\bibitem{boalch_2002_g_bundles_isomonodromy_quantum_weyl_groups}
\bysame, \emph{{$G$}-bundles, isomonodromy, and quantum {W}eyl groups}, Int.
  Math. Res. Not. (2002), no.~22, 1129--1166. \MR{1904670}

\bibitem{boalch_2012_simply_laced_isomonodromy_systems}
\bysame, \emph{Simply-laced isomonodromy systems}, Publ. Math. Inst. Hautes
  \'Etudes Sci. \textbf{116} (2012), 1--68. \MR{3090254}

\bibitem{boalch_2011_geometry_and_braiding_of_stokes_data}
\bysame, \emph{Geometry and braiding of {S}tokes data; fission and wild
  character varieties}, Ann. of Math. (2) \textbf{179} (2014), no.~1, 301--365.
  \MR{3126570}

\bibitem{boalch_yamakawa_2015_twisted_wild_character_varieties}
P.~P. Boalch and D.~Yamakawa, \emph{Twisted wild character varieties}, 2015,
  \href{https://arxiv.org/abs/1512.08091}{arXiv:1512.08091}.

\bibitem{bocklandt_le_bruyn_2002_necklace_lie_algebras}
R.~Bocklandt and L.~Le~Bruyn, \emph{Necklace {L}ie algebras and noncommutative
  symplectic geometry}, Math. Z. \textbf{240} (2002), no.~1, 141--167.
  \MR{1906711}

\bibitem{etingov_2007_calogero_moser_systems}
P.~I. Etingof, \emph{Calogero-{M}oser systems and representation theory},
  Zurich Lectures in Advanced Mathematics, European Mathematical Society (EMS),
  Z\"urich, 2007. \MR{2296754}

\bibitem{etingov_frenkel_kirillov_1998_lectures_on_representation_theory_and_kz_equations}
P.~I. Etingof, I.~B. Frenkel, and A.~A. Kirillov, Jr., \emph{Lectures on
  representation theory and {K}nizhnik-{Z}amolodchikov equations}, Mathematical
  Surveys and Monographs, vol.~58, American Mathematical Society, Providence,
  RI, 1998. \MR{1629472}

\bibitem{etingov_schiffmann_1998_lectures_on_quantum_groups}
P.~I. Etingof and O.~Schiffmann, \emph{Lectures on quantum groups}, Lectures in
  Mathematical Physics, International Press, Boston, MA, 1998. \MR{1698405}

\bibitem{felder_markov_tarasov_varchenko_2000_dynamical_connection}
G.~Felder, Y.~Markov, V.~Tarasov, and A.~Varchenko, \emph{Differential
  equations compatible with {KZ} equations}, Math. Phys. Anal. Geom. \textbf{3}
  (2000), no.~2, 139--177. \MR{1797943}

\bibitem{harnad_1994_dual_isomonodromic_deformations}
J.~Harnad, \emph{Dual isomonodromic deformations and moment maps to loop
  algebras}, Comm. Math. Phys. \textbf{166} (1994), no.~2, 337--365.
  \MR{1309553}

\bibitem{harnad_1996_quantum_imd_and_the_kz_equations}
\bysame, \emph{Quantum isomonodromic deformations and the
  {K}nizhnik-{Z}amolodchikov equations}, Symmetries and integrability of
  difference equations ({E}st\'erel, {PQ}, 1994), CRM Proc. Lecture Notes,
  vol.~9, Amer. Math. Soc., Providence, RI, 1996, pp.~155--161. \MR{1416835}

\bibitem{hitchin_1997_frobenius_manifolds}
N.~Hitchin, \emph{Frobenius manifolds}, Gauge theory and symplectic geometry
  ({M}ontreal, {PQ}, 1995), NATO Adv. Sci. Inst. Ser. C Math. Phys. Sci., vol.
  488, Kluwer Acad. Publ., Dordrecht, 1997, pp.~69--112. \MR{1461570}

\bibitem{jimbo_miwa_mori_sato_1980_density_matrix_bose_gas}
M.~Jimbo, T.~Miwa, Y.~M{\^o}ri, and M.~Sato, \emph{Density matrix of an
  impenetrable {B}ose gas and the fifth {P}ainlev\'e transcendent}, Phys. D
  \textbf{1} (1980), no.~1, 80--158. \MR{573370}

\bibitem{jimbo_miwa_ueno_1981_monodromy_preserving_deformations}
M.~Jimbo, T.~Miwa, and K.~Ueno, \emph{Monodromy preserving deformation of
  linear ordinary differential equations with rational coefficients. {I}.
  {G}eneral theory and {$\tau $}-function}, Phys. D \textbf{2} (1981), no.~2,
  306--352. \MR{630674}

\bibitem{knizhnik_zamolodchikov_1984_wess_zumino_witten}
V.~G. Knizhnik and A.~B. Zamolodchikov, \emph{Current algebra and
  {W}ess-{Z}umino model in two dimensions}, Nuclear Phys. B \textbf{247}
  (1984), no.~1, 83--103. \MR{853258}

\bibitem{kohno_2002_conformal_field_theory_and_topology}
T.~Kohno, \emph{Conformal field theory and topology}, Translations of
  Mathematical Monographs, vol. 210, American Mathematical Society, Providence,
  RI, 2002. \MR{1905659}

\bibitem{laszlo_1998_hitchin_equals_wzw}
Y.~Laszlo, \emph{Hitchin's and {WZW} connections are the same}, J. Differential
  Geom. \textbf{49} (1998), no.~3, 547--576. \MR{1669720}

\bibitem{le_bruyn_procesi_1990_semisimple_representations_of_quivers}
L.~Le~Bruyn and C.~Procesi, \emph{Semisimple representations of quivers},
  Trans. Amer. Math. Soc. \textbf{317} (1990), no.~2, 585--598. \MR{958897}

\bibitem{millson_toledano_laredo_2005_casimir_connection}
J.~J. Millson and V.~Toledano~Laredo, \emph{Casimir operators and monodromy
  representations of generalised braid groups}, Transform. Groups \textbf{10}
  (2005), no.~2, 217--254. \MR{2195601}

\bibitem{nagoya_sun_2011_confluent_kz_equations_with_poincare_rank_2_at_infinity}
H.~Nagoya and J.~Sun, \emph{Confluent {KZ} equations for {$\mathfrak{sl}_N$}
  with {P}oincar\'e rank 2 at infinity}, J. Phys. A \textbf{44} (2011), no.~28,
  285205, 17. \MR{2812339}

\bibitem{nagoya_yamada_2014_symmetries_of_quantum_lax_equations}
H.~Nagoya and Y.~Yamada, \emph{Symmetries of quantum {L}ax equations for the
  {P}ainlev\'e equations}, Ann. Henri Poincar\'e \textbf{15} (2014), no.~2,
  313--344. \MR{3159983}

\bibitem{reshetikhin_1992_kz_deformation_isomonodromy}
N.~Reshetikhin, \emph{The {K}nizhnik-{Z}amolodchikov system as a deformation of
  the isomonodromy problem}, Lett. Math. Phys. \textbf{26} (1992), no.~3,
  167--177. \MR{1199740}

\bibitem{schedler_2005_a_hopf_algebra_quantizing_a_necklace_lie_algebra}
T.~Schedler, \emph{A {H}opf algebra quantizing a necklace {L}ie algebra
  canonically associated to a quiver}, Int. Math. Res. Not. (2005), no.~12,
  725--760. \MR{2146606}

\bibitem{schedler_2012_deformations_of_algebras}
\bysame, \emph{Deformations of algebras in noncommutative geometry}, 2012,
  \href{https://arxiv.org/abs/1212.0914}{arXiv:1212.0914}.

\bibitem{schlesinger_1905_ueber_die_loesungen_gewisser_linearer_differentialgleichungen}
L.~Schlesinger, \emph{{\"U}ber die {L}\"osungen gewisser linearer
  {D}ifferentialgleichungen als {F}unktionen der singul\"aren {P}unkte}, J.
  Reine Angew. Math. \textbf{129} (1905), 287--294. \MR{1580670}

\bibitem{toledano_laredo_2002_kohno_drinfeld_for_quantum_weyl_groups}
V.~Toledano~Laredo, \emph{A {K}ohno-{D}rinfeld theorem for quantum {W}eyl
  groups}, Duke Math. J. \textbf{112} (2002), no.~3, 421--451. \MR{1896470}

\bibitem{tsuchiya_ueno_yamada_1989_conformal_field_theory}
A.~Tsuchiya, K.~Ueno, and Y.~Yamada, \emph{Conformal field theory on universal
  family of stable curves with gauge symmetries}, Integrable systems in quantum
  field theory and statistical mechanics, Adv. Stud. Pure Math., vol.~19,
  Academic Press, Boston, MA, 1989, pp.~459--566. \MR{1048605}

\bibitem{yamakawa_2015_talk}
D.~Yamakawa, \emph{Talk at the workshop on "accessory parameters"}, June 20,
  2015.

\end{thebibliography}
\end{document}